\newtheorem{theorem}{Theorem}[section]
\newtheorem{proposition}[theorem]{Proposition}
\newtheorem{lemma}[theorem]{Lemma}
\newtheorem{corollary}[theorem]{Corollary}
\theoremstyle{definition}
\newtheorem{definition}[theorem]{Definition}
\newtheorem{remark}[theorem]{Remark}
\newtheorem{example}[theorem]{Example}
\renewcommand{\phi}{\varphi}
\renewcommand{\epsilon}{\varepsilon}
\newcommand{\N}{\mathbb N}
\newcommand{\C}{\mathbb C}
\newcommand{\csa}{$C^*$-al\-ge\-bra}
\newcommand{\cssa}{$C^*$-sub\-al\-ge\-bra}
\newcommand{\shom}{$*$-homo\-mor\-phism}
\newcommand{\siso}{$*$-iso\-mor\-phism}
\newcommand{\su}{$\sigma$-unital}
\newcommand{\spu}{$\sigma_p$-unital}
\renewcommand{\subset}{\subseteq}
\renewcommand{\setminus}{\backslash}
\DeclareMathOperator{\BK}{\mathbb K} 
\newcommand{\A}{\mathfrak A} 
\newcommand{\B}{\mathfrak B} 
\newcommand{\I}{\mathfrak I} 
\newcommand{\E}{\mathcal E} 
\newcommand{\os}{\textnormal{os}} 
\newcommand{\col}{\textnormal c} 
\newcommand{\Mul}{\mathcal M} 
\newcommand{\Mat}{\mathsf M} 
\DeclareMathOperator{\Prim}{Prim} 
\newcommand{\CKsub}{\subset_{\textnormal{CK}}} 
\newcommand{\dm}{\succeq}
\newcommand{\gc}[1]{$#1$-coefficients}
\newcommand{\full}{full} 
\newcommand{\partitioned}{partitioned} 
\numberwithin{equation}{section}
\title{Hereditary $C^*$-subalgebras of graph $C^*$-algebras}
\author{Sara E. Arklint}
\address{Department of Mathematical Sciences, University of Copenhagen, Uni\-versi\-tets\-parken~5, DK-2100 Copenhagen, Denmark}
\email{arklint@math.ku.dk}
\author{James Gabe}
\address{Department of Mathematics and Computer Science, University of Southern Denmark, Campusvej~55, DK-5230  Odense, Denmark}
\curraddr{Mathematical Sciences, University of Southampton, University Road, Southampton, SO17 1BJ, United Kingdom}
\email{j.gabe@soton.ac.uk}
\author{Efren Ruiz}
\address{Department of Mathematics, University of Hawaii, Hilo, 200 W.~Kawili St., Hilo, Hawaii, 96720-4091 USA}
\email{ruize@hawaii.edu}
\date{\today}
\keywords{graph $C^*$-algebras, hereditary $C^*$-subalgebras}
\subjclass[2010]{Primary: 46L55}
\begin{document}

\begin{abstract}
We show that a \csa{} $\A$ which is stably isomorphic to a unital graph \csa{}, is isomorphic to a graph \csa{} if and only if it admits an approximate unit of projections.
As a consequence, a hereditary \cssa{} of a unital real rank zero graph \csa{} is isomorphic to a graph \csa{}.
Furthermore, if a \csa{} $\A$ admits an approximate unit of projections, then its minimal unitization is isomorphic to a graph \csa{} if and only if $\A$ is stably isomorphic to a unital graph \csa{}.
\end{abstract}

\bibliographystyle{alpha}
\maketitle

\section{Introduction}
Graph \csa{}s were introduced by M. Enomoto and Y. Watatani in 1980 as a generalization of the Cuntz--Krieger algebras.
For a graph \csa{} $C^*(E)$, many \csa{}ic properties correspond to properties of its underlying graph $E$, making graph \csa{}s \emph{\csa{}s we can see}.
Like Cuntz-Krieger algebras, graph \csa{}s provide models for purely infinite \csa{}s, both simple and with finitely many ideals.
In the simple case, any {stable} UCT Kirchberg algebra with free $K_1$-group is isomorphic to a graph \csa{}.
And in the non-simple case, the class of graph \csa{}s provide an abundance of concrete examples of \csa{}s with both purely infinite and stably finite parts.
For these reasons, graph \csa{}s show up in several contexts, including the classification program, as either counterexamples or test objects for conjectures and working theories.

In this paper, we study permanence properties for the class of graph \csa{}s.  An important observation for us is that all graph \csa{}s are \spu{}, i.e., they admit a countable approximate unit of projections.
Thus, having an approximate unit consisting of projections is a necessary condition for a \csa{} to be isomorphic to a graph \csa{}.  We show, in Theorem~\ref{t:main}, that it is also sufficient when the \csa{} is stably isomorphic to a unital graph \csa{}.  As a corollary, if $E$ is a graph with finitely many vertices (equivalently $C^*(E)$ is unital), then a hereditary \cssa{} of $C^*(E)\otimes\BK$ is isomorphic to a graph \csa{} if and only if it is \spu{}; see Corollary~\ref{c:hereditary}.  If, in addition, $E$ is furthermore assumed to satisfy Condition~(K) (equivalently the real rank of $C^*(E)$ is zero), then a \csa{} stably isomorphic to $C^*(E)$ is isomorphic to a graph \csa{}, and a hereditary \cssa{} of $C^*(E)\otimes\BK$ is isomorphic to a graph \csa{}.  It was proved by Crisp (see \cite[Theorem~3.5 and Lemma~3.6]{crisp}), that for a graph $E$ with finitely many vertices, if $X \subseteq E^0$ and $p_X$ is the sum of the vertex projections over $X$, then $p_X C^*(E) p_X$ is isomorphic to a graph \csa{}.  This result of Crisp becomes a special case of our result (see Corollary~\ref{c:hereditary}).  

These are surprising results, and as noted in Example~\ref{e:unital}, they do not hold in general for nonunital graph \csa{}s.
The results also hold within the class of Cuntz--Krieger algebras.  
As was shown by the first and last named authors in~\cite{ar-corners-ck-algs}, 
a unital \csa{} stably isomorphic to a Cuntz--Krieger algebra is isomorphic to a Cuntz--Krieger algebra, and a unital, hereditary \cssa{} of a Cuntz--Krieger algebra is isomorphic to a Cuntz--Krieger algebra.  Our methods and constructions generalize those in~\cite{ar-corners-ck-algs} to graphs with infinitely many edges.

As in~\cite{ar-corners-ck-algs}, the proofs are constructive.
When $\A$ is a \spu{} \csa{} stably isomorphic to a unital graph \csa{} $C^*(E)$, we describe how to build a graph $G$ with $C^*(G)\cong\A$, and how $G$ relates to $E$; see Theorem~\ref{t:main}.
For a general \spu{} \cssa{} of $C^*(E)\otimes\BK$, the underlying graph is also constructed; see Corollary~\ref{c:hereditary}.

Knowing how the graph $G$ relates to the graph $E$, allows one to investigate what properties of $C^*(E)$ are inherited by the \csa{} $\A\cong C^*(G)$.
In~\cite{elr}, our constructions are applied  to investigate a class of \csa{}s that turn out to be in the class of graph \csa{}s.
Furthermore, for graphs $G$ that are of the form we construct, the minimal unitization of $C^*(G)$ is easily seen to be isomorphic to a graph \csa{}.
This allows us to establish Theorem~\ref{thm:unitization} for \spu{} \csa{}s $\A$: The minimal unitization of $\A$ is isomorphic to a graph \csa{} if and only if $\A$ is stably isomorphic to a unital graph \csa{}.

The Leavitt path algebra associated to a graph $E$ is the universal algebra (over a fixed field) constructed from generators and relations which highly resemble the defining generators and relations of the graph $C^\ast$-algebra $C^\ast(E)$. It is an interesting problem to determine which results about graph $C^\ast$-algebras have analogues in the realm of Leavitt path algebras. We believe that the same methods and ideas, which are presented in this paper, should allow one to prove similar results for Leavitt path algebras. In particular, we believe that our strategy implies that any unital corner in a unital Leavitt path algebra is isomorphic to a Leavitt path algebra. One could argue, that this result is even more surprising than its $C^\ast$-analogue, as the lack of analytic structure makes the Leavitt path algebras even more rigid than their $C^\ast$-algebra cousins.

\section{Definitions and preliminaries}

Throughout the paper, unless stated otherwise, all graphs will be countable and directed.
All definitions in this section are standard definitions.

\begin{definition} \label{def:graph}
Let $E = (E^0,E^1,s_{E},r_{E})$ be a (countable, directed) graph. A Cuntz--Krieger $E$-family is a set of mutually orthogonal projections $\{ p_v \mid v \in E^0 \}$ and a set $\{ s_e \mid e \in E^1 \}$ of partial isometries satisfying the following conditions:
\begin{itemize}
	\item[(CK0)] $s_e^* s_f = 0$ if $e,f \in E^1$ and $e \neq f$,
	\item[(CK1)] $s_e^* s_e = p_{r_{E}(e)}$ for all $e \in E^1$,
	\item[(CK2)] $s_e s_e^* \leq p_{s_{E}(e)}$ for all $e \in E^1$, and,
	\item[(CK3)] $p_v = \sum_{e \in s_{E}^{-1}(v)} s_e s_e^*$ for all $v \in E^0$ with $0 < |s_{E}^{-1}(v)| < \infty$.
\end{itemize}
The \emph{graph \csa{}} $C^*(E)$ is defined as the universal $C^*$-algebra given by these generators and relations.
\end{definition}

\begin{definition}
Let $E$ be a graph.
A subgraph $G$ of $E$ is called a \emph{CK-subgraph}, written $G\CKsub E$, if $s_G^{-1}(v)=s_E^{-1}(v)$ for all $v\in G^0$.
\end{definition}

The notion of CK-subgraphs and more generally CK-morphisms between arbitrary directed graphs was introduced in~\cite{MR2508151} by Goodearl who showed for all fields $K$ that a CK-morphism $G\to E$ induces an injective $K$-algebra morphism $L_K(G)\to L_K(E)$ between the Leavitt path algebras.
By Lemma~2.8 of~\cite{ar-corners-ck-algs}, the same holds in the \csa{}ic setting, so in particular $C^*(G)$ embeds into $C^*(E)$ when $G\CKsub E$.
More concretely, if $\{q_v,t_e\mid v\in G^0, e\in G^1\}$ is a universal Cuntz-Krieger $G$-family generating $C^*(G)$ and  $\{p_v,s_e\mid v\in E^0, e\in E^1\}$ a universal Cuntz-Krieger $E$-family generating $C^*(E)$, then the assignment $q_v\mapsto p_v, t_e\mapsto s_e$ induces an injective \shom{} $C^*(G)\to C^*(E)$.

\begin{definition}
Let $E$ be a graph.
A \emph{loop} in $E$ is an edge $e$ for which $s_E(e)=r_E(e)$.
We say that a loop $e$ is \emph{based at $s_E(e)$}.
A \emph{path of length $n$} in $E$ is a sequence $e_1\cdots e_n$ of edges $e_i$ in $E$ with $s_E(e_i)=r_E(e_{i-1})$ for all $i\in\{2,\ldots,n\}$.
We consider vertices {as} paths of length 0, and let $E^*$ and $E^n$ denote the set of paths respectively paths of length $n$ in $E$.
We extend the range and source map to paths by $r_E(e_1\cdots e_n)=r_E(e_n)$ and $s_E(e_1\cdots e_n)=s_E(e_1)$, and by $s_E(v)=r_E(v)=v$.
A path $\alpha$ in $E$ of nonzero length with $s_E(\alpha)=r_E(\alpha)$ is called a \emph{cycle} and is considered based at $s_E(\alpha)$.
A cycle $\alpha=e_1\cdots e_n$ is called \emph{simple} if $r_E(e_i)\neq r_E(e_n)$ for all $i<n$.
\end{definition}

\begin{definition}
A graph $E$ satisfies \emph{Condition~(K)} if for all $v\in E^0$, there is either no cycle based in $v$ or two distinct {simple cycles} in $v$.
\end{definition}
By Theorem~2.5 of~\cite{hs:pi}, a graph \csa{} $C^*(E)$ has real rank zero if and only if its underlying graph $E$ satisfies Condition~(K).

\begin{definition}
For vertices $v$ and $w$ in $E$, we say that \emph{$v$ dominates $w$}, written $v\dm w$, if there is a path of nonzero length in $E$ from $v$ to $w$.  For $S \subseteq E^0$ and $v \in E^0$, we write $v \dm S$ if there exists $w \in S$ such that $v \dm w$.  

If there exists a path (possibly of length zero) from $v$ to $w$, then we write $v \geq w$.  If $v \geq w$ and $v \neq w$, then we write $v > w$.  
\end{definition}

Note that if $v \neq w$, then $v \dm w$ if and only if $v \geq w$ if and only if $v > w$.  In the case that $v = w$, $v \dm v$ establishes that there is a path of nonzero length from $v$ to $v$, where as $v \geq v$ does not.  Distinguishing these notions is crucial in the proofs of our results.

\begin{definition}
Let $E$ be a graph, and let $v$ be a vertex in $E$.
The vertex $v$ is called \emph{regular} if $s_E^{-1}(v)$ is finite and nonempty.
If $s_E^{-1}(v)$ is empty, $v$ is called a \emph{sink}, and if $s_E^{-1}(v)$ is infinite, $v$ is called an \emph{infinite emitter}.
If $r_E^{-1}(v)$ is empty, then $v$ is called a \emph{source}.
\end{definition}

\begin{definition}
Let $E$ be a graph.
A subset $H$ of $E^0$ is called \emph{hereditary} if for all $v\in H$ and $w\in E^0$, $v\geq w$ implies $w\in H$.
A subset $H$ of $E^0$ is called \emph{saturated} if for all regular vertices in $v\in E^0$, $r_E(s_E^{-1}(v))\subset H$ implies $v\in H$.
Given a saturated hereditary subset $H$ of $E^0$, a vertex $v\in E^0$ is called a \emph{breaking vertex for $H$} if $v$ is an infinite emitter and $s_E^{-1}(v)\cap r_E^{-1}(E^0\setminus H)$ is finite and nonempty.
\end{definition}

\begin{definition}
Let $E$ be a graph, and let $\{p_v,s_e\mid v\in E^0, e\in E^1\}$ denote a Cuntz--Krieger $E$-family generating $C^*(E)$.
An \emph{admissible pair $(H,S)$} in $E$ is a saturated hereditary subset $H$ of $E^0$ and a set $S$ of breaking vertices for $H$.
Let $(H,S)$ be an admissible pair.
We let $I_{(H,S)}$ denote the ideal in $C^*(E)$ generated by
\[ \{ p_v\mid v\in H \} \cup \left\{ p_v - \sum_{e\in s_E^{-1}(v)\cap r_E^{-1}(E^0\setminus H)} s_es_e^* \ \bigg| \ v\in S\right\}. \]
And we define a subgraph $E_{(H,S)}$ of $E$ by $E_{(H,S)}^0=H\cup S$
and $E_{(H,S)}^1=s_E^{-1}(H)\cup \{e\in s_E^{-1}(S) \mid r_E(e)\in H \}$.
\end{definition}

\begin{definition}
Let $E$ be a graph, and let $\{p_v,s_e\mid v\in E^0, e\in E^1\}$ denote a Cuntz--Krieger $E$-family generating $C^*(E)$.
Let $\gamma$ denote the gauge action on $C^*(E)$, i.e., the action $\gamma$ of the circle group $\mathbb T$ on $C^*(E)$ for which $\gamma_z(s_e)=zs_e$ and $\gamma_z(p_v)=p_v$ for all $z\in\mathbb T$, $e\in E^1$, and $v\in E^0$.
An ideal $\I$ in $C^*(E)$ is called \emph{gauge-invariant} if $\gamma_z(\I)\subseteq\I$ for all $z\in\mathbb T$.
\end{definition}

All ideals are assumed two-sided and closed.
By Theorem~3.6 of~\cite{bhrs}, all gauge-invariant ideals in a graph \csa{} $C^*(E)$ are of the form $I_{(H,S)}$ for $(H,S)$ an admissible pair in $E$.
Furthermore, the ideal $I_{(H,S)}$ is isomorphic to a full corner in $C^*(E_{(H,S)}) \otimes \BK$.

\begin{proposition}\label{prop: full corner gauge invariant}
Let $E$ be a graph and let $(H, S)$ be an admissible pair.  Then $I_{ ( H, S ) }$ and $I_{(H, S)} \otimes \BK$ are isomorphic to full corners of $C^* ( E_{ (H,S) } ) \otimes \BK$.
\end{proposition}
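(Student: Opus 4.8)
The first half of the proposition, namely that $I_{(H,S)}$ is isomorphic to a full corner of $C^*(E_{(H,S)})\otimes\BK$, is exactly the fact recalled above from Theorem~3.6 of~\cite{bhrs}, so there is nothing new to prove there; the whole task is to upgrade this to the same statement for $I_{(H,S)}\otimes\BK$. The plan is to combine the corner description of $I_{(H,S)}$ with Brown's stable isomorphism theorem and the elementary \siso{} $\BK\otimes\BK\cong\BK$.

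Write $D=C^*(E_{(H,S)})\otimes\BK$. Since $E_{(H,S)}$ is a countable graph, $C^*(E_{(H,S)})$ is separable, hence so are $D$ and its ideal $I_{(H,S)}$; in particular all three are \su{}. By the result just recalled, $I_{(H,S)}$ is isomorphic to a full hereditary \cssa{} of $D$, so Brown's stable isomorphism theorem applies and gives
\[
I_{(H,S)}\otimes\BK\;\cong\;D\otimes\BK\;=\;C^*(E_{(H,S)})\otimes\BK\otimes\BK\;\cong\;C^*(E_{(H,S)})\otimes\BK,
\]
the last isomorphism being $\BK\otimes\BK\cong\BK$. As every \csa{} is a full corner of itself (take the unit of its multiplier algebra), the right-hand side is a full corner of $C^*(E_{(H,S)})\otimes\BK$, and the proposition follows.

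For readers who prefer to keep the argument internal, the conclusion can also be reached without citing Brown's theorem: fixing a full projection $p\in\Mul(D)$ with $pDp\cong I_{(H,S)}$, the projection $p\otimes 1\in\Mul(D\otimes\BK)$ is again full, since $\overline{DpD}=D$ and $\overline{\BK\,\BK}=\BK$ together force $\overline{(D\otimes\BK)(p\otimes 1)(D\otimes\BK)}=D\otimes\BK$; moreover $(p\otimes 1)(D\otimes\BK)(p\otimes 1)\cong I_{(H,S)}\otimes\BK$, so $I_{(H,S)}\otimes\BK$ is a full corner of $D\otimes\BK$, and transporting this corner through a fixed \siso{} $\BK\otimes\BK\cong\BK$ exhibits it as a full corner of $C^*(E_{(H,S)})\otimes\BK$.

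I do not expect a genuine obstacle in this proof; the only points that need a moment's attention are checking that all the \csa{}s involved are \su{} (so that Brown's theorem, or equivalently the multiplier-algebra manipulation, is available) and verifying that fullness of a corner is preserved by tensoring with $\BK$ and by the isomorphism $\BK\otimes\BK\cong\BK$ --- both of which are routine.
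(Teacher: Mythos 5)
Your treatment of the second half --- passing from $I_{(H,S)}$ to $I_{(H,S)}\otimes\BK$ --- is correct and is essentially the paper's own argument: both reduce to Brown's theorem together with $\BK\otimes\BK\cong\BK$, and your ``internal'' variant with the projection $p\otimes 1$ is exactly how the paper phrases the last step of its proof. The weak point is the first half. The sentence preceding the proposition (``Furthermore, the ideal $I_{(H,S)}$ is isomorphic to a full corner in $C^*(E_{(H,S)})\otimes\BK$'') is not part of Theorem~3.6 of \cite{bhrs} --- that theorem gives the lattice bijection $(H,S)\mapsto I_{(H,S)}$ --- and is best read as a preview of this very proposition; taking it as already established makes your argument circular on the one point that actually carries the content. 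The paper fills this in as follows: by Theorem~5.1 of \cite{rt:ideals} one has $I_{(H,S)}\cong C^*(\overline{E}_{(H,S)})$, where $\overline{E}_{(H,S)}$ is obtained from $E_{(H,S)}$ by adjoining regular sources; consequently $C^*(E_{(H,S)})$ sits as a full corner of $C^*(\overline{E}_{(H,S)})$, so Brown's theorem (Corollary~2.9 of \cite{b:hereditary}) gives $C^*(E_{(H,S)})\otimes\BK\cong C^*(\overline{E}_{(H,S)})\otimes\BK$, and $I_{(H,S)}\cong C^*(\overline{E}_{(H,S)})$ is then a full corner of the latter, hence of $C^*(E_{(H,S)})\otimes\BK$. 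Once this is in place, your remaining steps (separability, fullness of $p\otimes 1$, the whole algebra being a full corner of itself via the unit of its multiplier algebra) go through without change.
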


\begin{proof}
By Theorem~5.1 of \cite{rt:ideals}, $I_{(H,S)} \cong C^* ( \overline{E}_{ (H,S) } )$ (see Definition~4.1 of \cite{rt:ideals} for the definition of $\overline{E}_{(H,S)}$).  Since $\overline{E}_{(H,S)}$ is obtained from $E_{(H,S)}$ by adding regular sources to $E_{(H,S)}$, we have that $C^* ( E_{(H,S) } )$ is isomorphic to a full corner of $C^* ( \overline{E}_{ (H,S) } )$.  By Corollary~2.9 of~\cite{b:hereditary}, $C^* ( E_{(H,S)} ) \otimes \BK \cong C^* ( \overline{E}_{ (H,S) } ) \otimes \BK$.  Therefore, $I_{(H,S)} \cong C^* ( \overline{E}_{ (H,S) } )$ is isomorphic to a full corner of $C^* ( E_{ (H,S) } ) \otimes \BK$ and $I_{(H, S)} \otimes \BK$ is isomorphic to a full corner of $C^* ( E_{ (H,S) } ) \otimes \BK \otimes \BK \cong C^* ( E_{(H,S) } ) \otimes \BK$.
\end{proof}

\section{Move equivalence and stably complete graphs}

Move equivalence for graphs with finitely vertices was introduced by A.~S\o{}rensen in~\cite{as:geo}.
It was introduced to identify stable isomorphism between unital graph \csa{}s $C^*(E)$ and $C^*(F)$ on the level of the graphs $E$ and $F$.

\begin{definition}[Out-splitting Graph]\label{def:out-splitting}
Let $E$ be a graph and let $u \in E^{0}$.  Partition $s_{E}^{-1} (u)$ into disjoints nonempty sets $\E_{1}, \ldots, \E_n$.  Define the \emph{out-splitting graph} $E_{\os}$ as follows:
\begin{align*}
{E}_{\os}^{0} &= \left( E^{0} \setminus \{ u\} \right) \cup \{ u^{1} , \ldots, u^n \} \\
{E}_{\os}^{1} &= \left( E^{1} \setminus r_{E}^{-1} (u) \right) \cup \{ e^{1} , \ldots, e^{n} \mid e \in r_{E}^{-1} (u) \}
\end{align*}
For $e \notin r_{E}^{-1}(u)$, we let $r_{ E_{\os } } (e) = r_{E} (e)$ and for $e \in r_{E}^{-1}(u)$, we let  $r_{ E_{\os } } (e^{i}) = u^{i}$.  For $e \notin s_{E}^{-1} (u)$, we let $s_{E_{\os}} ( e) = s_{E} (e)$, for $e \in s_{E}^{-1} (u) \setminus r_{E}^{-1}(u)$, we let $s_{E_{\os}} ( e) = u^{i}$ if $e \in \E_{i}$, and for $e \in s_{E}^{-1} (u) \cap r_{E}^{-1}(u)$, we let $s_{E_{\os}} ( e^{j}) = u^{i}$ if $e \in \E_{i}$, for $i,j=1,\ldots, n$.
\end{definition}

The following proposition is a special case of Theorem~3.2 of \cite{tbdp:flow}.

\begin{proposition}[Move~(O)]\label{t:out-splitting-BP}
Let $E$ be a graph and let $u \in E^{0}$.  Partition $s_{E}^{-1} (u)$ into disjoint nonempty sets $\E_{1}, \ldots, \E_n$.  If at most one $\E_{i}$ is infinite, then there exists a \siso{} $\phi \colon  C^{*} (E) \rightarrow C^{*} ( E_{\os})$ such that 
\begin{align*}
\phi( p_{v} ) = 
\begin{cases}
q_{v}, &\text{if $v \neq u$} \\
q_{u^{1}} +\cdots + q_{u^{n}}, &\text{if $v = u$} 
\end{cases}
\end{align*}
and
\begin{align*}
\phi( s_{e} ) = 
\begin{cases}
t_{e}, &\text{if $e \notin r_{E}^{-1} (u) $} \\
t_{e^{1}} +\cdots + t_{e^{n}}, &\text{if $e \in r_{E}^{-1} (u)$},
\end{cases}
\end{align*}
where $\{ p_{v} , s_{e} \mid v \in E^{0} , e \in E^{1} \}$ and $\{ q_{v} , t_{e} \mid v \in E_{\os}^{0} , e \in E_{\os }^{1} \}$ are universal Cuntz--Krieger families generating $C^{*} (E)$ and $C^{*} ( E_{\os} )$ respectively.  {Consequently, $\phi \otimes \mathrm{id}_{\BK} \colon  C^{*} (E) \otimes \BK \rightarrow C^{*} ( E_{\os}) \otimes \BK$ is a \siso.}
\end{proposition}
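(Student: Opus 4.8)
The plan is to prove this directly, by exhibiting $\phi$ together with an explicit two-sided inverse via the universal property of graph \csa{}s; this is cleaner than invoking Theorem~3.2 of \cite{tbdp:flow} because it makes the asserted formulas on generators transparent. Relabel the partition so that $\E_n$ is the (at most one) possibly infinite piece and $\E_1,\dots,\E_{n-1}$ are finite, and write $\{p_v,s_e\}$, $\{q_w,t_f\}$ for the generating Cuntz--Krieger families of $C^*(E)$ and $C^*(E_{\os})$. Throughout I would use the elementary identity $s_es_e^*\,s_fs_f^* = s_e(s_e^*s_f)s_f^* = 0$ for $e\neq f$, so that in any graph \csa{} the range projections of the canonical partial isometries are mutually orthogonal. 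For the forward direction I would check that $P_v := q_v$ ($v\neq u$), $P_u := \sum_{i=1}^n q_{u^i}$, $S_e := t_e$ ($e\notin r_E^{-1}(u)$), and $S_e := \sum_{i=1}^n t_{e^i}$ ($e\in r_E^{-1}(u)$) form a Cuntz--Krieger $E$-family inside $C^*(E_{\os})$: relations (CK0)--(CK2) are a bookkeeping case analysis according to whether an edge lies over $u$ at its source and/or its range (orthogonality of range projections shows each $S_e$ is a partial isometry with $S_e^*S_e = P_{r_E(e)}$), and for (CK3) at a regular $v\neq u$ one observes that the edges out of $v$ in $E_{\os}$ are exactly the unsplit edges of $s_E^{-1}(v)$ together with the copies $e^1,\dots,e^n$ of each $e\in s_E^{-1}(v)\cap r_E^{-1}(u)$, so $\sum_{e\in s_E^{-1}(v)}S_eS_e^*$ telescopes to $\sum_{f\in s_{E_{\os}}^{-1}(v)}t_ft_f^* = q_v$; (CK3) at $u$, relevant only when $u$ is regular and hence every $\E_i$ finite, follows by summing the corresponding identities over $u^1,\dots,u^n$. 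The universal property of $C^*(E)$ then produces $\phi$ with the stated formulas, and this half uses nothing about the partition beyond disjointness.

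For the inverse map the hypothesis is essential. I would set $Q_v := p_v$ ($v\neq u$), $Q_{u^i} := \sum_{e\in\E_i}s_es_e^*$ for $i<n$ (a finite sum of orthogonal subprojections of $p_u$), and crucially $Q_{u^n} := p_u - \sum_{i=1}^{n-1}Q_{u^i}$, which is legitimate even when $\E_n$ is infinite and which equals $\sum_{e\in\E_n}s_es_e^*$ when $u$ is regular by (CK3). For edges set $T_f := s_f$ ($f\notin r_E^{-1}(u)$) and $T_{e^j} := s_eQ_{u^j}$ ($e\in r_E^{-1}(u)$); since $Q_{u^j}\leq p_u = s_e^*s_e$ one gets that $T_{e^j}$ is a partial isometry with $T_{e^j}^*T_{e^j} = Q_{u^j}$. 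Checking that $\{Q_w,T_f\}$ is a Cuntz--Krieger $E_{\os}$-family is again a case analysis, and the one subtlety is that $u^i$ is a regular vertex of $E_{\os}$ precisely when $\E_i$ is finite, so (CK3) in $E_{\os}$ is demanded only at those $u^i$ --- which is exactly where $Q_{u^i}$ genuinely equals $\sum_{e\in\E_i}s_es_e^*$. The universal property of $C^*(E_{\os})$ then yields $\psi\colon C^*(E_{\os})\to C^*(E)$.

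Next I would verify $\psi\circ\phi = \id$ and $\phi\circ\psi = \id$ by comparing on generators, using $\sum_j Q_{u^j} = p_u$, $s_e = s_ep_u$, and $t_{e^j}q_{u^k} = \delta_{jk}t_{e^j}$; for instance $\psi(\phi(s_e)) = \sum_j s_eQ_{u^j} = s_e$ for $e\in r_E^{-1}(u)$, and $\phi(\psi(t_{e^j})) = S_eq_{u^j} = t_{e^j}$. This gives that $\phi$ is a \siso{}. The final clause is then immediate: if $\phi$ is a \siso{}, so is $\phi\otimes\id_{\BK}\colon C^*(E)\otimes\BK\to C^*(E_{\os})\otimes\BK$, with inverse $\psi\otimes\id_{\BK}$, since tensoring a \siso{} with the identity on a fixed (nuclear) \csa{} is again a \siso{}; this step uses nothing about graphs.

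The main obstacle I expect is the inverse map. The device of defining $Q_{u^n}$ by subtraction from $p_u$ rather than as an infinite sum is exactly what forces ``at most one $\E_i$ infinite'' to be the correct hypothesis --- with two infinite pieces one cannot carve $p_u$ into the required finitely many projections within $C^*(E)$ --- and the accompanying bookkeeping for (CK3) in $E_{\os}$, in particular getting the regular-versus-infinite-emitter status of each split vertex $u^i$ right, is where care is needed. (Alternatively one can simply quote Theorem~3.2 of \cite{tbdp:flow} and read off from its proof that the isomorphism has the stated effect on generators, but the explicit construction above is what we will actually want later.)
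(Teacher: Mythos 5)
Your proof is correct, but it takes a different route from the paper: the paper offers no argument at all for this proposition, simply recording it as a special case of Theorem~3.2 of Bates--Pask \cite{tbdp:flow}, whereas you give a self-contained verification via the universal property, exhibiting an explicit inverse $\psi$. Your construction is essentially the standard argument underlying the cited result, and all the delicate points check out: the forward family $\{P_v, S_e\}$ satisfies (CK0)--(CK3) with no hypothesis on the partition beyond disjointness; the inverse family is where the hypothesis enters, and your device of setting $Q_{u^n} = p_u - \sum_{i<n} Q_{u^i}$ (after relabelling so that $\E_n$ is the only possibly infinite class) correctly sidesteps the non-convergent sum $\sum_{e\in\E_n} s_es_e^*$, while (CK3) in $E_{\os}$ is only demanded at the $u^i$ with $\E_i$ finite --- and in the one remaining case ($i=n$ with $\E_n$ finite) all classes are finite, so $u$ is regular in $E$ and $Q_{u^n}$ does equal $\sum_{e\in\E_n}s_es_e^*$. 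The computations $\psi\circ\phi=\id$ and $\phi\circ\psi=\id$ on generators (using $\sum_j Q_{u^j}=p_u$, $s_e=s_ep_u$, $t_{e^k}q_{u^j}=\delta_{jk}t_{e^j}$, and (CK3) at the regular $u^i$ to get $\phi(Q_{u^i})=q_{u^i}$) are all valid. What your approach buys is transparency: the asserted formulas on generators are immediate from the construction rather than extracted from the proof in \cite{tbdp:flow}, and the precise point at which ``at most one $\E_i$ infinite'' is needed becomes visible. What the citation buys is brevity and consistency with the flow-equivalence framework the paper relies on elsewhere.
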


In~\cite{as:geo}, A.~S\o{}rensen defines Move~(O) together with Move~(I) (in-split at a regular vertex), Move~(S) (removing a regular source), and Move~(R) (reduction).
The following definition is Definition~4.1 of~\cite{as:geo}.

\begin{definition}
Move equivalence $\sim_M$ is the smallest equivalence relation on graphs with finitely many vertices such that $E\sim_MG$ if $G$ differs, up to graph isomorphism, from $E$ by an application of one of the moves (O), (I), (S), or (R), {or their inverses}.
If $E\sim_MG$, we say that $E$ is \emph{move equivalent} to $G$.
\end{definition}

If one adds Move~(C), the Cuntz splice, to the list of moves, one obtains a weaker move equivalence $\sim_{C\negthinspace E}$.
In~\cite{errs}, it is shown that $E\sim_{C\negthinspace E}G$ if and only if $C^*(E)\otimes\BK\cong C^*(G) {\otimes \BK}$, provided $C^*(E)$ and $C^*(G)$ are unital {and of real rank zero.}
For our purposes, the following generalization of Proposition~\ref{t:out-splitting-BP} suffices.  We refer to~\cite{as:geo} for a proof.

\begin{theorem}\label{t:move-eq}
Let $E$ and $G$ be graphs with finitely many vertices.  If $E\sim_MG$, then $C^*(E)\otimes\BK\cong C^*(G)\otimes\BK$.
\end{theorem}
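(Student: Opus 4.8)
The plan is to reduce the statement to a single move. By definition, $\sim_M$ is the smallest equivalence relation on graphs with finitely many vertices under which $E$ and $G$ are identified whenever $G$ is isomorphic to a graph obtained from $E$ by one application of Move~(O), (I), (S), or (R), or the inverse of one of these. Since ``$C^*(E)\otimes\BK\cong C^*(G)\otimes\BK$'' is itself a symmetric, transitive, reflexive relation on graphs, it suffices to prove: whenever $G$ is obtained from $E$ by a single application of one of the four moves, one has $C^*(E)\otimes\BK\cong C^*(G)\otimes\BK$. Symmetry of the conclusion then handles the inverse moves, and transitivity lets us chain the isomorphisms along a finite sequence realizing $E\sim_M G$.

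Moves~(O) and (I) in fact yield $*$-isomorphisms already at the level of the unstabilized algebras. For Move~(O), an out-split partitions $s_E^{-1}(u)$ into finitely many nonempty sets of which at most one is infinite --- this restriction being part of the definition of the move --- so Proposition~\ref{t:out-splitting-BP} supplies a $*$-isomorphism $C^*(E)\to C^*(E_{\os})$, hence $C^*(E)\otimes\BK\cong C^*(E_{\os})\otimes\BK$. Move~(I), an in-split at a \emph{regular} vertex, is the dual situation: one exhibits the evident candidate Cuntz--Krieger family for the in-split graph inside $C^*(E)$ and, conversely, a Cuntz--Krieger $E$-family inside the in-split graph algebra, verifies the relations (CK0)--(CK3) --- regularity of the split vertex being exactly what makes (CK3) check out --- and appeals to universality to obtain mutually inverse \shom{}s; this is the standard in-splitting isomorphism for graph \csa{}s (see~\cite{tbdp:flow, as:geo}).

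For Move~(S), which deletes a regular source $v$, the finite sum $p=\sum_{w\in E^0\setminus\{v\}}p_w$ is a projection in $C^*(E)$ with $pC^*(E)p$ canonically $*$-isomorphic to $C^*(E\setminus v)$, and $p$ is full: any ideal containing $p_w$ for all $w\neq v$ contains $s_es_e^*$ for all $e\in s_E^{-1}(v)$ --- no such $e$ is a loop since $v$ is a source, so $r_E(e)\neq v$ --- hence contains $p_v$ by (CK3), using that $v$ is regular. A full corner being stably isomorphic to the ambient \csa{}, we get $C^*(E)\otimes\BK\cong C^*(E\setminus v)\otimes\BK$; this is also Corollary~2.9 of~\cite{b:hereditary}, used already in the proof of Proposition~\ref{prop: full corner gauge invariant}. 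Finally, Move~(R) contracts an edge joining two distinct vertices, one of which is regular and otherwise unbranched; identifying the Cuntz--Krieger family of the reduced graph with the appropriate partial isometries and projections in $C^*(E)$ is once more a routine computation with the universal relations and produces a $*$-isomorphism of the two graph \csa{}s, a fortiori a stable isomorphism.

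Thus the proof is an assembly of known identifications rather than a single hard argument; the one point deserving care is the behaviour of infinite emitters under Moves~(O) and (I). Out-splitting a vertex into two infinite pieces can fail to induce an isomorphism, which is precisely why the hypothesis ``at most one $\E_i$ is infinite'' appears in Proposition~\ref{t:out-splitting-BP} and why S\o{}rensen restricts Move~(O) accordingly, and it is why Move~(I) is performed only at regular vertices. A complete proof implementing this outline is given in~\cite{as:geo}.
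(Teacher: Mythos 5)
Your overall strategy---observe that stable isomorphism is an equivalence relation, so it suffices to treat one application of each of the four moves, and then chain the resulting stable isomorphisms along a finite sequence witnessing $E\sim_M G$---is exactly the standard one, and it is what the reference \cite{as:geo} carries out; the paper itself offers no proof of Theorem~\ref{t:move-eq} beyond that citation. Your treatments of Move~(O) (via Proposition~\ref{t:out-splitting-BP}) and Move~(S) (the sum of the surviving vertex projections is a full projection whose corner is the smaller graph algebra) are correct.

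However, your claim that Move~(I) ``yields $*$-isomorphisms already at the level of the unstabilized algebras,'' to be proved by producing mutually inverse \shom{}s from universality, is false, and that step cannot be carried out as described. With this paper's conventions, where (CK3) sums over $s_E^{-1}(v)$, it is \emph{out}-splitting that preserves the isomorphism class, while \emph{in}-splitting in general does not. Concretely, let $E$ be the graph with one vertex $v$ and three loops $e,f,g$, so $C^*(E)\cong\mathcal O_3$, and in-split at the regular vertex $v$ along the partition $\{e\},\{f,g\}$ of $r_E^{-1}(v)$. The in-split graph $F$ has adjacency matrix $\left(\begin{smallmatrix}1&2\\1&2\end{smallmatrix}\right)$, whence $K_0(C^*(F))=\Z^2/\mathrm{im}\left(\begin{smallmatrix}0&1\\2&1\end{smallmatrix}\right)\cong\Z/2$ with $[1_{C^*(F)}]=[p_{v^1}]+[p_{v^2}]=0$, whereas $[1_{\mathcal O_3}]$ is the generator of $K_0(\mathcal O_3)\cong\Z/2$; both algebras are UCT Kirchberg algebras, so by Kirchberg--Phillips $C^*(F)\not\cong\mathcal O_3$, and no pair of mutually inverse \shom{}s exists. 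The correct statement (Theorem~5.3 of \cite{tbdp:flow}, and the form used in \cite{as:geo}) is that in-splitting at a regular vertex yields a strong Morita equivalence, realized by exhibiting one algebra as a full corner of the other; Brown's theorem \cite{b:hereditary} then supplies the stable isomorphism, which is all Theorem~\ref{t:move-eq} asserts. The same caution applies to your parenthetical claim that Move~(R) produces a $*$-isomorphism of the unstabilized algebras: reduction also changes the class of the unit in $K_0$ in general and only preserves the stable isomorphism class. Your outline survives once these two steps are replaced by the full-corner arguments, but as written they contain a genuine error rather than a routine verification.
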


We will need the following moves that can be derived from move equivalence.  For collapsing (Proposition~\ref{t:collapsing}) and Move~(T) (Proposition~\ref{p:moveT}), we refer to Theorem~5.2 respectively Theorem~5.4 of~\cite{as:geo}.

\begin{definition}[Collapsing]\label{def:collapsing} 
Let $E$ be a graph and let $u \in E^{0}$.  Assume that $u$ does not support a loop.
Defining the \emph{collapsing graph} $E_{\col}$ by $E_{\col}^{0} = E^{0} \setminus \{ u \}$, 
\begin{align*}
E_{\col}^{1} = \left( E^{1} \setminus \left( s_{E}^{-1}(u) \cup r_{E}^{-1}(u) \right) \right) \bigcup \{ [ef]  \mid e \in r_{E}^{-1}(u) , f \in s_{E}^{-1} (u)\},
\end{align*}  
the range and source maps extends those of $E$, $r_{E_{\col}} ( [ ef ] ) = r_{E} (f)$ and $s_{E_{\col}} ( [ ef ] ) = s_{E} (e)$.
\end{definition}

\begin{proposition}\label{t:collapsing}
Let $E$ be a graph and let $u \in E^{0}$.  Suppose $u$ is a regular vertex that is not a source and 
does not support a loop,
and let $E_\col$ denote the graph obtained by collapsing $u$.
Then $E\sim_ME_\col$, and
there exists a \siso{} $\phi \colon  C^{*} (E) \otimes \BK \rightarrow C^{*} (E_{\col} ) \otimes \BK$ such that $\phi ( p_{v} \otimes e_{1,1} ) \sim q_{v} \otimes e_{1,1}$ for all $v \in E^{0} \setminus \{ u \}$,
where $\{ p_{v} , s_{e} \mid v \in E^{0} , e \in E^{1} \}$ and $\{ q_{v} , t_{e} \mid v \in E_{\col}^{0} , e \in E_{\col }^{1} \}$ are universal Cuntz--Krieger families generating $C^{*} (E)$ and $C^{*} ( E_{\col} )$ respectively and $\{ e_{i,j} \}_{ i,j \in \N}$ is a system of matrix units of $\BK$. 
\end{proposition}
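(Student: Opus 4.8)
The statement has two parts. When $E^{0}$ is finite, so that move equivalence is defined, $E\sim_M E_\col$ is \cite[Theorem~5.2]{as:geo} (collapsing is realised by a sequence of moves (O), (I), (S), (R)), and together with Theorem~\ref{t:move-eq} this already yields \emph{some} stable isomorphism $C^{*}(E)\otimes\BK\cong C^{*}(E_\col)\otimes\BK$. The force of the proposition is to produce one whose action on the vertex projections is visible, and the plan is to build it directly rather than follow it through a sequence of moves: I would write down an explicit Cuntz--Krieger $E$-family inside $C^{*}(E_\col)\otimes\BK$ that carries $p_{v}$ to $q_{v}\otimes e_{1,1}$ for $v\neq u$ and generates a full corner, verify injectivity of the induced \shom{} with the gauge-invariant uniqueness theorem, and finish with Brown's stabilization theorem. (One could also first apply Move~(O) to split $u$ into vertices each emitting a single edge and reduce to $|s_E^{-1}(u)|=1$, but the family below works uniformly.)

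Write $s_{E}^{-1}(u)=\{f_{1},\dots,f_{n}\}$, which is finite since $u$ is regular, and set $w_{i}:=r_{E}(f_{i})$; as $u$ supports no loop, $w_{i}\neq u$, so each $q_{w_{i}}$ is a nonzero projection of $C^{*}(E_\col)$. In $C^{*}(E_\col)\otimes\BK$ define $P_{v}:=q_{v}\otimes e_{1,1}$ for $v\in E^{0}\setminus\{u\}$, $P_{u}:=\sum_{i=1}^{n}q_{w_{i}}\otimes e_{i+1,i+1}$, $S_{g}:=t_{g}\otimes e_{1,1}$ for edges $g$ not incident to $u$, $S_{f_{i}}:=q_{w_{i}}\otimes e_{i+1,1}$, and $S_{e}:=\sum_{i=1}^{n}t_{[ef_{i}]}\otimes e_{1,i+1}$ for $e\in r_{E}^{-1}(u)$. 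I would then check that $\{P_{v},S_{e}\}$ is a Cuntz--Krieger $E$-family; this is a routine computation with the matrix units, the substantive points being that (CK3) at $u$ becomes $\sum_{i}S_{f_{i}}S_{f_{i}}^{*}=\sum_{i}q_{w_{i}}\otimes e_{i+1,i+1}=P_{u}$, that (CK3) at a vertex $v\neq u$ is precisely (CK3) at $v$ in $C^{*}(E_\col)$ after rewriting each summand $t_{e}t_{e}^{*}$ with $r_{E}(e)=u$ as $\sum_{i}t_{[ef_{i}]}t_{[ef_{i}]}^{*}$, and that (CK0) among the $S$'s reduces to orthogonality of distinct edges of $E_\col$ together with orthogonality of the $e_{i,j}$. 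Universality then gives a \shom{} $\psi\colon C^{*}(E)\to C^{*}(E_\col)\otimes\BK$ with $\psi(p_{v})=P_{v}$ and $\psi(s_{e})=S_{e}$.

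For injectivity I would invoke the gauge-invariant uniqueness theorem. Since every Cuntz--Krieger relation is homogeneous of degree $0$, assigning to each edge of $E_\col$ not incident to $u$ the weight $1$ and to each edge $[ef_{i}]$ the weight $2$ defines a circle action $\gamma'$ on $C^{*}(E_\col)$; with $D_{\sigma}$ the diagonal unitary in $\Mul(\BK)$ having $\sigma$ in coordinates $2,\dots,n+1$ and $1$ elsewhere, and $\beta_{\sigma}:=\gamma'_{\sigma}\otimes\Ad(D_{\sigma})$, a check on generators gives $\beta_{\sigma}\circ\psi=\psi\circ\gamma_{\sigma}$: the weight $2$ on $[ef_{i}]$ cancels the $\bar{\sigma}$ coming from $e_{1,i+1}$, while the $\sigma$ in coordinate $i+1$ supplies the weight missing from $S_{f_{i}}$. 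As the $P_{v}$ are all nonzero, the gauge-invariant uniqueness theorem shows $\psi$ is injective.

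It remains to identify the image and stabilize. Put $B:=C^{*}(E_\col)\otimes\BK$ and $P:=\sum_{v\in E^{0}}P_{v}\in\Mul(B)$, a projection. One shows $\psi(C^{*}(E))=PBP$: the inclusion $\subseteq$ is immediate, and for $\supseteq$ one notes $t_{[ef_{i}]}\otimes e_{1,1}=S_{e}S_{f_{i}}\in\psi(C^{*}(E))$, whence $C^{*}(E_\col)\otimes e_{1,1}\subseteq\psi(C^{*}(E))$, and then reaches the remaining matrix slots of $PBP$ by left- and right-multiplying this $e_{1,1}$-part by the $S_{f_{i}}$. Since $Q\otimes e_{1,1}\leq P$, where $Q$ is the unit of $\Mul(C^{*}(E_\col))$, and $Q\otimes e_{1,1}$ is full in $B$, the projection $P$ is full; so $\psi$ identifies $C^{*}(E)$ with a full corner of the separable stable \csa{} $B$. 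By Brown's stabilization theorem, $PBP\otimes\BK\cong B$, and the isomorphism coming from the imprimitivity bimodule $PB$ may be chosen so that each projection $e\otimes e_{1,1}$ of $PBP\otimes\BK$ is Murray--von Neumann equivalent in $B$ to $e$. Composing with $\psi\otimes\id_{\BK}$ produces $\phi\colon C^{*}(E)\otimes\BK\to C^{*}(E_\col)\otimes\BK$ with $\phi(p_{v}\otimes e_{1,1})\sim P_{v}=q_{v}\otimes e_{1,1}$ for all $v\in E^{0}\setminus\{u\}$, as required. I expect the main obstacle to lie not in the (tedious but mechanical) Cuntz--Krieger verification but in the last two points: proving that the image is \emph{exactly} $PBP$ with $P$ full, and arranging the stabilization step so that the Murray--von Neumann class of $q_{v}\otimes e_{1,1}$ is provably preserved.
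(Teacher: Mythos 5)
Your argument is correct, but it takes a genuinely different route from the paper's, which offers no proof at all for this proposition and simply defers to Theorem~5.2 of \cite{as:geo}, where the collapse is realised as a composition of elementary moves (out-splitting $u$ into singleton-emitting vertices, then reducing each away) and the effect on vertex projections has to be tracked through that composition. You instead build the stable isomorphism in one step: an explicit Cuntz--Krieger $E$-family in $C^*(E_{\col})\otimes\BK$, injectivity via a twisted gauge action (a generalised gauge action of weight $2$ on the edges $[ef_i]$ tensored with $\Ad$ of a diagonal unitary), identification of the image with the corner $PBP$ via $t_{[ef_i]}\otimes e_{1,1}=S_eS_{f_i}$, fullness of $P$ from $Q\otimes e_{1,1}\leq P$, and Brown's stabilization theorem. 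I checked the details and they hold; the one point worth making explicit in a write-up is that a vertex $v\neq u$ is regular in $E$ if and only if it is regular in $E_{\col}$ (because $n=|s_E^{-1}(u)|$ is finite and nonzero), which is what makes the (CK3) verification at $v\neq u$ legitimate, and that the intertwining $\beta_\sigma\circ\psi=\psi\circ\gamma_\sigma$ together with $P_v\neq 0$ is exactly the hypothesis of the gauge-invariant uniqueness theorem for arbitrary graphs. What your approach buys is a self-contained proof in which the assertion $\phi(p_v\otimes e_{1,1})\sim q_v\otimes e_{1,1}$ --- the part of the proposition this paper actually uses downstream --- is transparent, since Brown's isomorphism is implemented by a multiplier partial isometry and hence preserves Murray--von Neumann classes; the citation route requires unwinding the projection bookkeeping through several moves. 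What the direct construction does not give by itself is the move-equivalence assertion $E\sim_M E_{\col}$, for which you rightly still cite \cite{as:geo} and correctly restrict to graphs with finitely many vertices, where $\sim_M$ is defined.
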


\begin{proposition}[Move~(T)]\label{p:moveT}
Let $E$ be a graph and let $\alpha=e_1\cdots e_n$ be a path in~$E$.
Assume that $s_E^{-1}(s_E(e_1))\cap r_E^{-1}(r_E(e_1))$ is infinite.
Let $G$ be the graph defined by $G^0=E^0$,
\[ G^1 = E^1 \cup \{ \alpha^m \mid m\in\N \}, \]
and $r_G$ and $s_G$ extending $r_E$ and $s_E$ respectively with $r_G(\alpha^m)=r_E(\alpha)$ and $s_G(\alpha)=s_E(\alpha)$.
Then $E\sim_MG$.
\end{proposition}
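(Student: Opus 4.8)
The plan is to realize $G$ from $E$ by a finite sequence of out-splittings (Definition~\ref{def:out-splitting}, Proposition~\ref{t:out-splitting-BP}) and collapses (Definition~\ref{def:collapsing}, Proposition~\ref{t:collapsing}), arguing by induction on the length $n$ of $\alpha=e_1\cdots e_n$. Throughout write $v=s_E(e_1)$ and $v_i=r_E(e_i)$, so the hypothesis says $E$ has countably infinitely many edges from $v$ to $v_1$. The device I will use repeatedly is that, since our graphs have only finitely many vertices, adjoining countably many parallel edges from a vertex that \emph{already} emits countably many parallel edges to a given target yields an isomorphic graph; consequently, rather than transforming $E$ into $G$ directly, I will apply one and the same sequence of moves to $E$ and to $G$ and then check that the two outputs are isomorphic. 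For $n=1$ this already finishes the proof: $G$ is just $E$ with countably many further edges $v\to v_1$ adjoined, and since $E$ has countably infinitely many such edges, $E\cong G$ and hence $E\sim_M G$.

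The case $n=2$ is the heart of the argument. One may assume $v_2\neq v_1$, as otherwise $G$ only adds edges $v\to v_1$ and we are back in the trivial case. If $v_1$ emits the single edge $e_2$, I collapse $v_1$ (legitimate: $v_1$ is then regular, not a source, and supports no loop); otherwise I first out-split at $v_1$ using the partition $\E_1=\{e_2\}$, $\E_2=s_E^{-1}(v_1)\setminus\{e_2\}$ — a valid Move~(O) because $\E_1$ is finite — after which the new vertex $v_1^1$ emits only $e_2$, receives the countably many $1$-copies of the edges into $v_1$ (in particular infinitely many from $v$), is not a source, and supports no loop, so I collapse $v_1^1$. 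Denote this composite procedure by $P$; then $E\sim_M P(E)$ and $G\sim_M P(G)$. Unravelling the definitions, the collapse step in $P$ produces, from each of the infinitely many edges from $v$ into $v_1$, a new edge from $v$ to $v_2$, so $P(E)$ already has countably infinitely many edges $v\to v_2$. Moreover $P$ processes the common part of $E$ and $G$ identically and merely renames the countably many extra edges $v\to v_2$ distinguishing $G$ from $E$, so $P(G)$ is $P(E)$ with countably many edges $v\to v_2$ re-adjoined; hence $P(G)\cong P(E)$, and therefore $E\sim_M P(E)\cong P(G)\sim_M G$.

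For $n\geq 3$ I induct. Apply the inductive hypothesis to the length-$(n-1)$ path $e_1\cdots e_{n-1}$, whose first edge still satisfies the hypothesis, to pass from $E$ to $E_1$, namely $E$ with countably many edges $v\to v_{n-1}$ adjoined. Since $E_1$ now has infinitely many edges from $v$ to $v_{n-1}=s_E(e_n)$, apply the case $n=2$ to the length-two path $g\,e_n$, where $g$ is one of these new edges, to pass from $E_1$ to $E_2$, namely $E_1$ with countably many edges $v\to v_n$ adjoined. Then $E\sim_M E_2$, where $E_2$ is $E$ with countably many edges $v\to v_{n-1}$ and countably many edges $v\to v_n$ adjoined. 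Running the identical two steps starting from $G$ (legitimate, as $G$ inherits all the relevant hypotheses from $E$) yields $G\sim_M G_2$ for the analogous graph $G_2$; comparing edge multiplicities and using $\aleph_0+\aleph_0=\aleph_0$ shows $E_2\cong G_2$, so $E\sim_M E_2\cong G_2\sim_M G$.

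The step I expect to be the main obstacle is the $n=2$ case — more precisely, the fact that the out-split-and-collapse procedure does not land on $G$ on the nose: it deletes $e_2$ and creates spurious parallel edges into $v_2$ from every source feeding $v_1$. The point will be not to try to repair this, but to notice that exactly the same discrepancy appears when the procedure is applied to $G$, so that the two outputs coincide once parallel edges emanating from the infinite emitter $v$ have been absorbed into one another. Besides this, one has to dispose of the various degenerate configurations — loops at $v_1$, coincidences among $v$, $v_1$, $v_2$, and the sub-case in which $v_1$ emits only $e_2$ — but each of these is either immediately trivial or handled by the same argument with only routine bookkeeping changes.
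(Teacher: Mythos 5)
Your argument is correct, but it is worth saying up front that the paper does not prove this proposition at all: it simply defers to Theorem~5.4 of~\cite{as:geo}, so what you have produced is a self-contained derivation from the two tools the paper does make available, namely Move~(O) (Proposition~\ref{t:out-splitting-BP}) and collapsing (Proposition~\ref{t:collapsing}). The skeleton is sound: the absorption principle ($\aleph_0$ parallel edges absorb $\aleph_0$ more, up to graph isomorphism, and graph-isomorphic graphs are move equivalent by definition of $\sim_M$), the out-split at $v_1$ with $\E_1=\{e_2\}$ (legal since $\E_1$ is finite and nonempty and $\E_2$ is nonempty in the non-degenerate subcase), the collapse of $v_1^1$ (legal since it emits exactly $e_2$, receives the $1$-copies of the infinitely many edges $v\to v_1$ so is not a source, and supports no loop because $e_2$ is not a loop and any loop at $v_1$ lands in $\E_2$ and hence acquires source $v_1^2$), and the symmetric application of the same procedure to $G$ followed by absorption of the resulting discrepancy in parallel edges. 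The induction step likewise checks out, since the comparison of $E_2$ and $G_2$ reduces to comparing edge multiplicities between ordered pairs of vertices, all of which become $\aleph_0$ wherever the two graphs could differ. The one place where your phrase ``the same sequence of moves'' needs a word of care is the coincidence $v=v_1$: there the new edges $\alpha^m$ of $G$ lie in $s_G^{-1}(v_1)$ and must be assigned to a class of the out-split partition (they go into $\E_2$, and the infinitely many loops at $v$ guarantee $v^1$ is still not a source); this is routine, as you predicted, but it is the only degenerate case where $P$ applied to $G$ is not literally ``$P$ applied to $E$ plus untouched extra edges.'' What your route buys is independence from~\cite{as:geo} beyond the two propositions already quoted in the paper; what the citation buys is brevity.
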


The following lemma tells us that, up to move equivalence, we can remove breaking vertices from a graph with finitely many vertices.
Two edges $e$ and $f$ in a graph~$E$ are called \emph{parallel} if $s_E(e)=s_E(f)$ and $r_E(e)=r_E(f)$.

\begin{lemma} \label{remove-breaking} 
Let $E$ be a graph with finitely many vertices, and let $u \in E$ be an infinite emitter.  Put
\[
	\E_1 = \{ e \in s_{E}^{-1}(u) \mid \text{there are infinitely many edges parallel to } e \} 
\]
and $\E_2 = s_{E}^{-1}(u) \setminus \E_1.$
Let $F$ be the graph obtained by out-splitting the vertex $u$ into the vertices $u^1, u^2$ according to the partition $\E_1, \E_2$.  Then 
\begin{enumerate}
\item $u^{2}$ is a finite emitter;

\item $u^{1}$ has the property that if $e \in s_{F} ^{-1} ( u^{1} )$, then there are infinitely many edges parallel to $e$;

\item the number of infinite emitters in $E$ is equal to the number of infinite emitters in $F$; and 

\item $C^*(E)\cong C^*(F)$, and $E\sim_MF$.
\end{enumerate}
\end{lemma}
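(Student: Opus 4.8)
The plan is to derive all four conclusions from one observation: $\E_2$ is finite. Since $E^0$ is finite and $u$ emits infinitely many edges, the pigeonhole principle gives a vertex receiving infinitely many edges from $u$; moreover, for each $v\in E^0$ the edges from $u$ to $v$ are pairwise parallel, so they all lie in $\E_1$ when there are infinitely many of them and all lie in $\E_2$ otherwise. Hence $\E_1\neq\emptyset$, so the out-splitting is legitimate (in the degenerate case $\E_2=\emptyset$ one simply takes $F=E$ with $u^1=u$, and all four conclusions are then trivial or vacuous), and $\E_2$ is a union over the finitely many $v\in E^0$ of finite sets, hence finite; consequently $\E_1=s_E^{-1}(u)\setminus\E_2$ is infinite, and at most one of $\E_1,\E_2$ is infinite.

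From here I would read off $s_F^{-1}(u^1)$ and $s_F^{-1}(u^2)$ directly from Definition~\ref{def:out-splitting}: for $i=1,2$, the edges of $F$ with source $u^i$ are the non-loop edges of $\E_i$, each contributing one edge of $F$ with unchanged range, together with, for each loop $e$ based at $u$ lying in $\E_i$, the two edges $e^1,e^2$ with ranges $u^1$ and $u^2$. In particular $s_F^{-1}(u^i)$ is finite exactly when $\E_i$ is, which yields $(1)$ (as $\E_2$ is finite) and shows $u^1$ is an infinite emitter (as $\E_1$ is infinite). For $(2)$, let $f\in s_F^{-1}(u^1)$: if $f$ is a non-loop edge of $\E_1$, then the infinitely many edges from $u$ to $r_E(f)$ in $E$ all lie in $\E_1$, are all non-loops, and hence all become edges $u^1\to r_E(f)$ in $F$; and if $f=e^j$ comes from a loop $e$ based at $u$ with $e\in\E_1$, then $u$ supports infinitely many loops, each lying in $\E_1$, and each contributes a parallel copy $g^j$ of $f$ in $F$. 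Either way $f$ has infinitely many parallels in $F$.

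For $(3)$, I would observe that for $v\in E^0\setminus\{u\}$ the out-splitting merely replaces each edge of $s_E^{-1}(v)$ with range $u$ by two edges and leaves the other edges of $s_E^{-1}(v)$ untouched, so $s_F^{-1}(v)$ is finite if and only if $s_E^{-1}(v)$ is; combined with $(1)$ and the fact that $u^1$ is an infinite emitter, this identifies the infinite emitters of $F$ as precisely those of $E$ other than $u$, together with $u^1$, so the two numbers agree. Finally $(4)$ is immediate: at most one of $\E_1,\E_2$ is infinite, so Proposition~\ref{t:out-splitting-BP} supplies a \siso{} $C^*(E)\to C^*(F)$; and since $E$ and $F$ both have finitely many vertices and $F$ is an out-split of $E$, we have $E\sim_M F$ by the definition of move equivalence.

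The only step requiring genuine care is the bookkeeping for loops based at $u$: under out-splitting such a loop is simultaneously duplicated into $e^1,e^2$ and re-sourced to $u^i$ according to the part of the partition containing it, so one must check that a loop in $\E_1$ really does produce infinitely many parallels for each of its two copies in $F$, while a loop in $\E_2$ contributes only the two edges $e^1,e^2$ to $s_F^{-1}(u^2)$. Everything else is routine once $\E_2$ is known to be finite.
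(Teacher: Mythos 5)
Your proposal is correct and follows essentially the same route as the paper: observe that $\E_2$ is finite because $E^0$ is finite, handle the degenerate case $\E_2=\emptyset$ by taking $F=E$, invoke Move~(O) via Proposition~\ref{t:out-splitting-BP} for item (4), and read items (1)--(3) off the out-splitting construction. The only difference is that you spell out what the paper dismisses as ``clear from the construction'' --- in particular the loop bookkeeping for (2) and the vertex-by-vertex count for (3), which the paper's proof does not explicitly address --- and these details check out.
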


\begin{proof}
Note that if $\E_{2} = \emptyset$, then $E = F$.  Suppose $\E_{2} \neq \emptyset$.  Since $E$ has finitely many vertices, we have that $\E_{2}$ is finite.  Therefore, $E\sim_MF$ via Move~(O), hence $C^*(E)\cong C^*(F)$.  It is clear from the construction of the out-splitting graph that $u^{2}$ is a finite emitter, and that $u^{1}$ has the property that if $e \in s_{F} ^{-1} ( u^{1} )$, then there are infinitely many edges parallel to $e$.
\end{proof}

\begin{definition} \label{d:canonical}
A graph $E$ is \emph{stably complete} if
\begin{enumerate}
\item its set of vertices $E^0$ is finite, \label{cf:fin}
\item every regular vertex $v$ of $E$ supports a loop, \label{cf:reg}
\item every vertex $v$ of $E$ supporting 2 distinct simple cycles, supports 2 loops, \label{cf:loops}
\item every infinite emitter $v$ of $E$ emits infinitely to every vertex it dominates, i.e., if $v\dm w$, then $s_E^{-1}(v)\cap r_E^{-1}(w)$ is infinite, \label{cf:inf}
\item for all vertices $v$ and $w$ in $E$, if $v$ dominates $w$ then $v$ emits to $w$, i.e., if $v\dm w$ then $s_E^{-1}(v)\cap r_E^{-1}(w)\neq\emptyset$, and \label{cf:emit}
\item if $v$ is an infinite emitter in $E$ supporting a loop, then there exists a regular vertex $w$ in $E$ with $v\dm w$ and $w\dm v$. \label{cf:inf to finite}
\end{enumerate}
\end{definition}

\begin{proposition} \label{p:canonical}
Let $E$ be a graph with finitely many vertices. Then there exists a stably complete graph $G$ for which 
$E\sim_MG$.
\end{proposition}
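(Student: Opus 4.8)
The plan is to bring $E$ into stably complete form by a finite sequence of the moves of Section~3 together with the derived moves of Propositions~\ref{t:out-splitting-BP}, \ref{t:collapsing} and~\ref{p:moveT} and Lemma~\ref{remove-breaking}, establishing the six conditions of Definition~\ref{d:canonical} in three phases arranged so that no phase undoes the work of an earlier one. Condition~(1) holds from the start and is preserved by every move, so it needs no attention.

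In the first phase I would normalise the infinite emitters. Applying Lemma~\ref{remove-breaking} to each infinite emitter in turn --- out-splitting one infinite emitter only multiplies the parallel families emitted by the others, so these normalisations do not interfere --- I may assume that every edge emitted by an infinite emitter has infinitely many parallels; in particular every infinite emitter then emits infinitely to each vertex it emits to. Next, for each infinite emitter $v$ and each $w$ with $v\dm w$, I choose a path $v\to w$ and apply Move~(T) (Proposition~\ref{p:moveT}), which is permissible precisely because $v$ emits infinitely to the first intermediate vertex of the path, to adjoin infinitely many edges $v\to w$. Since adjoining parallel edges changes neither the set of infinite emitters nor the domination preorder, after finitely many such steps condition~(4) holds and, as a by-product, every infinite emitter emits to every vertex it dominates.

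In the second phase I would deal with cycles and loops. Iterating Move~(S) and Proposition~\ref{t:collapsing} I delete or collapse every regular vertex lying on no cycle; collapsing an acyclic regular vertex creates no new cycle and no new source, so this terminates. A strongly connected component whose only simple cycle has length $\geq 2$ consists of regular vertices without loops --- a loop, or an infinite emitter, which by condition~(4) would carry infinitely many loops, would give a second simple cycle --- so collapsing its vertices one at a time turns it into a single vertex carrying a loop. For a strongly connected component $C$ with two distinct simple cycles I must arrange that every vertex of $C$ carries two loops: one produces, by out-splitting so as to isolate edges and then collapsing the resulting loopless source-free pieces, a first vertex of $C$ with two loops, and then propagates two-loop-ness along $C$; the crucial point, which makes the propagation terminate rather than regress, is that one always peels edges off a vertex that already carries its two loops, so that that vertex retains a loop after the out-split. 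All the collapses used here are of regular, non-source, loopless vertices, which neither erase a parallel family joining surviving vertices, nor create a cycle through a vertex outside the cycle being contracted, nor create an infinite emitter; hence condition~(4) and the by-product of the first phase survive, and now conditions~(2) and~(3) hold too.

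In the last phase I would realise the domination preorder by edges and then secure condition~(6). Among all pairs $(v,w)$ with $v$ regular, $v\dm w$ and no edge $v\to w$, pick one for which the shortest path from $v$ to $w$ is as short as possible; then it has length $2$, so there is a path $v\to u\to w$ with $v$, $u$, $w$ distinct (condition~(2) excludes $v=w$). Out-splitting $u$ to isolate the edges from $u$ to $w$ and collapsing the resulting loopless source-free piece adjoins the edge $v\to w$, together with some edges among vertices already dominating $w$, and leaves conditions~(2), (3) and~(4) intact; if every path $v\to w$ ran through an infinite emitter, that emitter would already emit to $w$ by the first phase, so this situation does not arise. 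Iterating over the finitely many deficient pairs gives condition~(5). Finally, an infinite emitter $v$ carrying a loop has infinitely many loops by condition~(4); out-splitting $v$ with one partition class consisting of two of its loops and the other of everything else (legal by Move~(O) (Proposition~\ref{t:out-splitting-BP}), as only the latter class is infinite) replaces $v$ by a regular vertex with two loops and an infinite emitter with infinitely many loops lying in a common component, since the split loops connect them; this yields condition~(6) there, and re-running the edge completion on the finitely many new deficiencies (a second loop being already present, conditions~(2) and~(3) are not at issue) finishes the argument. The step I expect to be the main obstacle is the strongly connected case of condition~(3) --- moving a component with two distinct simple cycles to one in which every vertex carries two loops, and organising the out-splits and collapses into a terminating induction that respects the other conditions and the edge completion --- which parallels the analysis of irreducible components behind the canonical forms of~\cite{as:geo} and~\cite{ar-corners-ck-algs}.
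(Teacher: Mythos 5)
Your phases for conditions \eqref{cf:fin}, \eqref{cf:reg}, \eqref{cf:inf} and \eqref{cf:inf to finite} track the paper's proof closely: Lemma~\ref{remove-breaking} followed by Move~(T) for the infinite emitters, Move~(S) and Proposition~\ref{t:collapsing} for sources and loopless regular vertices, and an out-split separating a regular piece from each loop-supporting infinite emitter. The real divergence, and the real problem, is in how you handle \eqref{cf:loops} and \eqref{cf:emit}. The paper gets both at once from a single citable derived move, the ``legal column additions'' of \cite[Proposition~3.2]{errs-Cuntz-splice} (see also \cite[Lemma~7.2]{as:geo}): for a regular vertex $v$ and a path $\mu$ through distinct vertices with source $v$, right-multiplying $\mathsf{A}_E-\mathsf{I}$ by the corresponding elementary matrices is a move equivalence that increases $\mathsf{A}(v,r(\mu))$; applied to paths witnessing domination this yields \eqref{cf:emit}, and applied to a simple cycle of length $\geq 2$ based at $v$ it plants a second loop, yielding \eqref{cf:loops}. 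You instead try to manufacture these from raw out-splits and collapses. For \eqref{cf:emit} your induction on shortest path length is essentially a from-scratch derivation of the length-$2$ column addition and is basically sound, except that your claim that the intermediate vertex is never an infinite emitter is false and the justification (``that emitter would already emit to $w$'') is a non sequitur --- the deficiency concerns $v$ emitting to $w$, not $u$. (The case is harmless: isolate a \emph{single} edge $u\to w$ so that only one partition class is infinite, and collapse the resulting regular one-edge vertex.) But for \eqref{cf:loops} you give only a plan --- ``out-splitting so as to isolate edges and then collapsing,'' then ``propagating two-loop-ness along $C$'' --- and you yourself flag this as the main obstacle. That step is precisely where the content of the proposition lives, and as written it is not proved: no induction is set up, no termination argument is given, and no result from \cite{as:geo} or \cite{errs-Cuntz-splice} is actually invoked to cover it.

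The gap is fixable, and the fix also shows why your ordering makes life harder than necessary. If you establish \eqref{cf:emit} \emph{first}, then \eqref{cf:loops} collapses to a one-step argument: a vertex $v$ supporting two distinct simple cycles but fewer than two loops supports a simple cycle through some $u\neq v$, so by \eqref{cf:emit} there are edges $v\to u$ and $u\to v$; out-splitting $u$ to isolate an edge $u\to v$ and collapsing the resulting loopless, non-source, regular vertex creates a new loop at $v$ while preserving $v$'s existing loop, and the finitely many such repairs terminate because (as in your phase three) the operation creates no new dominations and destroys no existing loops. Either adopt that ordering, or simply invoke \cite[Proposition~3.2]{errs-Cuntz-splice} as the paper does; as the proposal stands, condition \eqref{cf:loops} is asserted rather than established.
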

\begin{proof}
We first apply Lemma~\ref{remove-breaking} to each infinite emitter in $E$ to get a graph $E_{1}$ such that $E\sim_ME_1$, $E_{1}$ has finitely many vertices, and for every infinite emitter $v \in E_{1}^{0}$, if $e \in s_{E_{1}}^{-1} (v)$, then there exists infinitely many edges parallel to $e$.

For each infinite emitter $v$ and each $w$ with $v\dm w$, we can now apply Proposition~\ref{p:moveT} since all paths out of $v$ trivially satisfy the required condition.
We thereby achieve a graph $E_2$ with finitely many vertices for which $E_1\sim_ME_2$ and \eqref{cf:inf} of Definition~\ref{d:canonical} holds.

Now remove all regular sources of $E_{1}$, remove the regular vertices that become regular sources, and repeat this procedure finitely many times, so that we get a subgraph $E_{3}$ of $E_{2}$ with no regular sources.
Since $E_3$ is achieved from $E_2$ via Move~(S), $E_3\sim_ME_2$.

By Proposition~\ref{t:collapsing}, we may collapse all regular vertices $v$ in $E_3$ that is not a base point of at least one loop, to get a graph $E_{4}$ satisfying \eqref{cf:fin}, \eqref{cf:reg}, and \eqref{cf:inf}, for which $E_3 \sim_M E_4$.  For each infinite emitter $v$ and for each $w \in E^{0}$, with $v \dm w$ choose $e_{w} \in s^{-1}(v) \cap r^{-1} (w)$.  Then partition $s^{-1}(v)$ using the partition $\mathcal{E}_{1} = \{ e_{w} : v \dm w \}$ and $\mathcal{E}_{2} = s^{-1} (v) \setminus \mathcal{E}_{1}$.  Applying Move~(O) to $v$ with respect to this partition and doing this for all infinite emitters, we get a graph $E_{5}$ such that $E_{4} \sim_{M} E_{5}$ such that $E_{5}$ satisfies \eqref{cf:fin}, \eqref{cf:reg}, \eqref{cf:inf}, \eqref{cf:inf to finite}, and if $v$ is an infinite emitter and $| s^{-1} (w) \cap r^{-1} (v) | \geq 1$, then $| s^{-1} (w) \cap r^{-1} (w') | \geq 1$ for all $v \dm w'$. 

Throughout the rest of the proof, $\mathsf{A}_{F}$ will denote the adjacency matrix of $F$, i.e., $\mathsf{A}_{F} (u,v) = | s^{-1} (u) \cap r^{-1} (v) |$.  Let $\mathsf{E}_{u,v}$ denote the matrix that is the identity except at the entry $(u,v)$ in which $\mathsf{E} ( u, v ) = 1$.  Note that $\mathsf{E}_{u,v}$ acts on the left by adding adding row $v$ to row $u$ and $\mathsf{E}_{u,v}$ acts on the right by adding column $u$ to column $v$.  We will also use the convention that $\infty - 1 = \infty$.  We now do ``legal column operations'' as in \cite[Proposition~3.2]{errs-Cuntz-splice} (see also \cite[Lemma~7.2]{as:geo}) to obtain a new graph $E_{6} \sim_{M} E_{5}$, $E_{6}$ satisfies  \eqref{cf:fin}, \eqref{cf:reg}, \eqref{cf:loops}, \eqref{cf:inf}, \eqref{cf:emit}, and \eqref{cf:inf to finite}.  Note that if $v$ is an infinite emitter in $E_{5}$ such that $v \dm w$, then $v$ emits to $w$ by \eqref{cf:inf}.  Suppose $v$ is a regular vertex and $\mu = \mu_{1} \cdots \mu_{n}$ is a path through distinct vertices with source $v$ and $n \geq 2$.  Let $E_{6}$ be the graph with adjacency matrix satisfying
\[
\mathsf{A}_{E_{6}} - \mathsf{I} = ( \mathsf{A}_{E_{5}} - \mathsf{I} )E_{ s( \mu_{2} ), r( \mu_{2} ) } E_{ s( \mu_{3} ), r( \mu_{3} ) } \cdots E_{ s( \mu_{n} ), r( \mu_{n} ) }  
\]   
Since $\mathsf{A}_{E_{5}} ( s( \mu_{i} ) , r( \mu_{i} ) ) > 0$ for all $i$, by \cite[Proposition~3.2]{errs-Cuntz-splice} (see also \cite[Lemma~7.2]{as:geo}), $E_{5} \sim_{M} E_{6}$.  Note that $\mathsf{A}_{E_{6}} (v, r( \mu_{n} ) )  \geq \mathsf{A}_{E_{5}} ( v, r( \mu_{n} ) ) + 1$.  Moreover, if $r(\mu) = v$, then $\mathsf{A}_{E_{6}}( v, v ) \geq 2$.  So, in particular, if $v$ is a regular vertex that supports 2 distinct simple cycles, then $\mathsf{A}_{E_{6}}( v, v ) \geq 2$ since for $\mathsf{A}_{E_{5}} (v, v ) = 1$ and $v$ is a regular vertex that supports 2 distinct paths, there must be a path of length greater than or equal to 2 from $v$ to $v$.

Continuing this process finitely many times for all regular vertices $v$ and all $v \dm w$ with $s^{-1} (v) \cap r^{-1} (w) = \emptyset$, we get $E_{7} \sim_{M} E_{6}$ and $E_{7}$ satisfies \eqref{cf:fin}, \eqref{cf:reg}, \eqref{cf:loops}, \eqref{cf:inf}, \eqref{cf:emit}, and \eqref{cf:inf to finite}.  
\end{proof}

\section{Projections in unital graph $C^*$-algebras}
For a stable $C^\ast$-algebra $\A$ let $s_1,s_2,\dots \in \Mul(\A)$ be a sequence of isometries such that $\sum_{k=1}^\infty s_k s_k^\ast$ converges strictly to $1_{\Mul(\A)}$.
For a projection $p$ in $\A$ and $n\in \N \cup \{ \infty \}$ we let $n p$ denote the projection
\[
 \sum_{k=1}^n s_k p s_k^\ast.
\]
For $n= \infty$ this sum converges strictly to a multiplier projection. 
This construction is unique up to unitary equivalence, in the sense that if $t_1,t_2,\dots$ is another such sequence of isometries, then $\sum_{k=1}^\infty t_k s_k^\ast$ converges strictly to a unitary $u$ such that
\[
 u^\ast \left( \sum_{k=1}^n t_k p t_k^\ast \right) u = \sum_{k=1}^n s_k p s_k^\ast
\]
for all $n\in \N \cup \{\infty\}$. Thus we will in general not specify our chosen sequence of isometries.
It is obvious, that if $p\in \A$ is a projection then $p$ is Murray--von Neumann equivalent (denoted by $\sim$) to $1p$.

Also, if $I$ is some countable index set, and $(p_i)_{i\in I}$ is a collection of projections, then we let
\[
\bigoplus_{i\in I,f} p_i = \sum_{k=1}^{|I|} s_k p_{f(k)} s_k^\ast,
\]
where $f \colon \{1,\dots, |I|\} \to I$ is a bijection. Here $\{ 1,\dots, |I|\} = \mathbb N$ if $|I| = \infty$. As above, this does not depend on choice of bijection $f$ up to unitary equivalence, and thus we will omit $f$ from the notation above. We also define the sum $p \oplus q = s_1 p s_1^\ast + s_2 q s_2^\ast$. This sum is commutative and associative up to unitary equivalence, by similar arguments as above, so we allow ourselves to write $p_1 \oplus p_2 \oplus p_3$ (and similar expressions) without emphasising if this means $(p_1 \oplus p_2) \oplus p_3$ or $p_1 \oplus (p_2 \oplus p_3)$.

We will always consider a $C^\ast$-algebra $\A$ as a $C^\ast$-subalgebra of $\A \otimes \mathbb K$, via the isomorphism $id_\A \otimes e_{1,1} \colon \A \xrightarrow{\cong} \A \otimes e_{1,1} \subseteq \A \otimes \mathbb K$. So, for instance, if we are given projections $p,q\in \A$, then the sum $p \oplus q$ makes sense in $\A \otimes \mathbb K$.

\begin{definition}[\gc{E}]
Let $E$ be a graph and let $\{ p_{v} , s_{e} \mid v \in E^{0} , E^{1} \}$ be a universal Cuntz--Krieger family generating $C^{*}(E)$. Given a projection $p$ in $C^*(E)\otimes\BK$, a set of positive integers $\{n_{(v,T)}\}_{(v,T)\in S}$ indexed by a finite subset $S$ of $E^0\times 2^{E^1}$ is called \emph{\gc{E} for $p$} if
\begin{enumerate}
\item for all $(v,T)\in S$, $T$ is a finite subset of $s_E^{-1}(v)$; \label{gcone}
\item for all $(v,T)\in S$, $T\neq\emptyset$ only if $s_E^{-1}(v)$ is infinite; \label{gctwo}
\item the projection 
\[
\bigoplus_{(v,T)\in S}n_{(v,T)}\left(p_v-\sum_{e\in T} s_es_e^*\right)
\]
(which exists due to \eqref{gcone}--\eqref{gctwo})
is Murray--von Neumann equivalent to $p$ in $C^*(E)\otimes\BK$.
\end{enumerate}
\end{definition}

\begin{theorem}[cf. {\cite[Theorem~3.4 and Corollary~3.5]{HLMRT-non-stable-K-thy} and \cite[Theorem~3.5]{amp:nonstablekthy}}]\label{t:non-stable-kthy} 
Let $E$ be a graph and let $\{ p_{v} , s_{e} \mid v \in E^{0} , E^{1} \}$ be a universal Cuntz--Krieger family generating $C^{*}(E)$. 
Then any projection $p$ in $C^*(E)\otimes\BK$ has \gc{E}.
\end{theorem}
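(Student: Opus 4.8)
The plan is to deduce the statement from the known description of the nonstable $K$-theory of graph \csa{}s, transcribed into the notation fixed before the statement. Recall from \cite[Theorem~3.4 and Corollary~3.5]{HLMRT-non-stable-K-thy} and \cite[Theorem~3.5]{amp:nonstablekthy} that the commutative monoid $V(C^*(E))$ of Murray--von Neumann equivalence classes of projections over $C^*(E)$ is identified with the graph monoid of $E$, and that under this identification $V(C^*(E))$ is generated, as a monoid, by the classes of the projections $p_v$ for $v\in E^0$ together with the classes of the projections $p_v-\sum_{e\in T}s_es_e^*$ for $v$ an infinite emitter and $T$ a finite nonempty subset of $s_E^{-1}(v)$. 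Equivalently, every projection over $C^*(E)$ is Murray--von Neumann equivalent to a finite orthogonal sum of projections of these two kinds.

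Since $V(\A)$ is naturally isomorphic to $V(\A\otimes\BK)$ for every \csa{} $\A$ --- surjectivity of the natural map being the standard fact that any projection in $\A\otimes\BK$ is Murray--von Neumann equivalent to a projection in $\Mat_n(\A)$ for some $n\in\N$, via a small self-adjoint perturbation and functional calculus --- and since the generators listed above already lie in $C^*(E)\cong C^*(E)\otimes e_{1,1}\subseteq C^*(E)\otimes\BK$, the cited description of $V(C^*(E))$ is equally a description of the Murray--von Neumann equivalence classes of projections in $C^*(E)\otimes\BK$.

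Now let $p$ be a projection in $C^*(E)\otimes\BK$. Writing its class in $V(C^*(E)\otimes\BK)$ as a finite sum of the generators above, we obtain
\[
[p]=\sum_{(v,T)\in S}n_{(v,T)}\Bigl[\,p_v-\sum_{e\in T}s_es_e^*\,\Bigr],
\]
where $S$ is a finite subset of $E^0\times 2^{E^1}$, each $n_{(v,T)}$ is a positive integer, each $T$ is a finite subset of $s_E^{-1}(v)$, and $T\neq\emptyset$ only when $s_E^{-1}(v)$ is infinite (if $p$ is equivalent to $0$, take $S=\emptyset$). Thus conditions~\eqref{gcone} and~\eqref{gctwo} of the definition of \gc{E} hold. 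By the definitions of the operations $n(\,\cdot\,)$ and $\bigoplus$, the projection
\[
\bigoplus_{(v,T)\in S}n_{(v,T)}\Bigl(\,p_v-\sum_{e\in T}s_es_e^*\,\Bigr)
\]
has class $\sum_{(v,T)\in S}n_{(v,T)}[\,p_v-\sum_{e\in T}s_es_e^*\,]=[p]$ in $V(C^*(E)\otimes\BK)$, hence is Murray--von Neumann equivalent to $p$ in $C^*(E)\otimes\BK$. This verifies the equivalence required in the definition of \gc{E}, so $\{n_{(v,T)}\}_{(v,T)\in S}$ is \gc{E} for $p$.

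The only point deserving care --- and where I would concentrate the work --- is the bookkeeping identifying the abstract monoid generators supplied by \cite{HLMRT-non-stable-K-thy} and \cite{amp:nonstablekthy} with the concrete projections $p_v$ and $p_v-\sum_{e\in T}s_es_e^*$ appearing here, and checking that $S$ can indeed be taken finite with strictly positive multiplicities (which amounts to discarding vanishing terms and observing that $[p]=0$ precisely when $p$ is equivalent to $0$). The remainder is a formal manipulation of the $\oplus$- and $n(\,\cdot\,)$-notation set up before the statement.
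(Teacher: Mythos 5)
Your argument is correct and coincides with the paper's treatment: the paper offers no proof of this theorem, deferring entirely to the cited results of Ara--Moreno--Pardo and Hay--Loving--Montgomery--Ruiz--Todd, and your proposal is exactly the standard deduction from those results (the identification of $V(C^*(E))$ with the graph monoid, whose generators are the classes of $p_v$ and $p_v-\sum_{e\in T}s_es_e^*$, combined with $V(\A)\cong V(\A\otimes\BK)$). The bookkeeping you flag at the end is indeed the only substantive content, and it is precisely what the cited references supply.
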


\begin{corollary} \label{c:non-stable-kthy}
Let $E$ be a graph.
If $\I$ is an ideal in $C^*(E)$ generated by projections, then $\I$ is gauge-invariant.
In particular, there exists an admissible pair $(H,V)$ in $E^0$ for which $\I=I_{(H,V)}$.
\end{corollary}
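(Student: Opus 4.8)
The plan is to deduce the statement from the quoted description of gauge-invariant ideals (Theorem~3.6 of~\cite{bhrs}): once we know that $\I$ is gauge-invariant, it must equal $I_{(H,V)}$ for some admissible pair $(H,V)$, which is exactly the ``in particular'' clause. So the whole content is to show that an ideal $\I$ of $C^*(E)$ generated by a set of projections $\{p_i\}_{i\in\Lambda}$ is gauge-invariant. I would first move to the stabilisation: the assignment $\J\mapsto\J\otimes\BK$ is an order isomorphism from the ideal lattice of $C^*(E)$ onto that of $C^*(E)\otimes\BK$, and it carries the gauge action $\gamma$ to $\gamma\otimes\id_{\BK}$, in the sense that $(\gamma_z\otimes\id_{\BK})(\J\otimes\BK)=\gamma_z(\J)\otimes\BK$; hence $\I$ is gauge-invariant as soon as $\I\otimes\BK$ is $(\gamma\otimes\id_{\BK})$-invariant, and $\I\otimes\BK$ is precisely the ideal of $C^*(E)\otimes\BK$ generated by the projections $\{p_i\otimes e_{1,1}\}_{i\in\Lambda}$.

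Now I would apply Theorem~\ref{t:non-stable-kthy} to each projection $p_i\otimes e_{1,1}$: it has \gc{E}, say $\{n^{(i)}_{(v,T)}\}_{(v,T)\in S_i}$ with $S_i\subseteq E^0\times 2^{E^1}$ finite, so $p_i\otimes e_{1,1}$ is Murray--von Neumann equivalent in $C^*(E)\otimes\BK$ to the projection
\[
q_i\ =\ \bigoplus_{(v,T)\in S_i} n^{(i)}_{(v,T)}\Bigl(p_v-\sum_{e\in T}s_es_e^*\Bigr),
\]
which by construction is a \emph{finite orthogonal} sum of projections, each of them Murray--von Neumann equivalent to one of the finitely many projections $p_v-\sum_{e\in T}s_es_e^*$, $(v,T)\in S_i$. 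Since Murray--von Neumann equivalent projections generate the same ideal, and a finite orthogonal sum of projections generates the same ideal as the family of its summands, the ideal generated by $q_i$ --- equivalently, the ideal generated by $p_i\otimes e_{1,1}$ --- equals the ideal generated by $\{\,p_v-\sum_{e\in T}s_es_e^*\mid (v,T)\in S_i\,\}$. Each of these generators is fixed by $\gamma_z\otimes\id_{\BK}$, since $\gamma_z(p_v)=p_v$ and $\gamma_z(s_es_e^*)=z\bar z\,s_es_e^*=s_es_e^*$. Taking the union over $i\in\Lambda$, we find that $\I\otimes\BK$ is generated by a family of $(\gamma\otimes\id_{\BK})$-fixed projections, hence is $(\gamma\otimes\id_{\BK})$-invariant; by the first paragraph $\I$ is gauge-invariant, and Theorem~3.6 of~\cite{bhrs} then completes the argument.

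The only step that is not purely formal is the reduction of ``the ideal generated by $q_i$'' to ``the ideal generated by its gauge-invariant building blocks $p_v-\sum_{e\in T}s_es_e^*$'', and I expect this to be the place that needs care, although it rests only on two elementary observations. First, if $f$ and $g$ are projections with $f\sim g$, say $f=ww^*$ and $g=w^*w$, then $f=wgw^*$ lies in the ideal generated by $g$, and symmetrically, so $f$ and $g$ generate the same (closed, two-sided) ideal; applied summand by summand this handles the Murray--von Neumann equivalences that come out of the definitions of $n\cdot(-)$ and $\bigoplus$ in Section~4 (the relevant partial isometries are products of the fixed shift isometries $s_k$, and carry each summand of $q_i$ to a projection equivalent to the corresponding $p_v-\sum_{e\in T}s_es_e^*$). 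Second, if $q=\sum_m f_m$ is a finite sum of mutually orthogonal projections, then $f_m=qf_mq$ lies in the ideal generated by $q$, while $q$ lies in the ideal generated by $\{f_m\}$, so the two generate the same ideal. Combining these with the bookkeeping that every $p_v-\sum_{e\in T}s_es_e^*$ with $(v,T)\in S_i$ does occur as a summand of $q_i$ (because $n^{(i)}_{(v,T)}\geq 1$) yields the claimed identification of ideals.
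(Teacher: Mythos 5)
Your proposal is correct and follows essentially the same route as the paper: apply Theorem~\ref{t:non-stable-kthy} to $p\otimes e_{1,1}$ for each generating projection $p$, observe that $\I$ is then generated by the gauge-fixed projections $p_v-\sum_{e\in T}s_es_e^*$, and invoke Theorem~3.6 of~\cite{bhrs}. The paper's proof is just a terser version of yours, leaving implicit the bookkeeping (the ideal-lattice passage to the stabilization and the fact that Murray--von Neumann equivalent projections, and finite orthogonal sums versus their summands, generate the same ideal) that you spell out.
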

\begin{proof}
Let $\{p_v,s_e\mid v\in E^0, e\in E^1\}$ be a universal Cuntz-Kriger $E$-family generating $C^*(E)$, and let 
 $\{e_{i,j}\}_{i,j}$ denote a system of matrix units for $\BK$.
Let $\mathcal{P}$ be a set of projections in $C^*(E)$ such that the ideal generated by $\mathcal{P}$ is $\I$, and let \gc E $\{n_{(v,T)}\}_{(v,T)\in S_p}$ for $p\otimes e_{1,1}$ be given for $p \in \mathcal{P}$.
Then $\I$ is generated as an ideal by
\[ \bigcup_{p \in \mathcal{P} } \left\{ p_v - \sum_{e\in T}s_es_e^* \;\middle|\; (v,T)\in S_p \right\}. \]
As these generators are fixed under the gauge action, $\I$ is gauge-invariant.
\end{proof}

\begin{definition}\label{def:part}
Let $E$ be a graph and let $\{ p_{v} , s_{e} \mid v \in E^{0} , e \in E^{1} \}$ be a universal Cuntz--Krieger family generating $C^{*}(E)$.  Let $\{q_k\}_{k=1}^\infty$ be a sequence of projections in $C^*(E)\otimes\BK$ and let \gc E $\{n^{(k)}_{(v,T)}\}_{(v,T)\in S_k}$ for each $q_k$ be given.
The sequence $\{\{n^{(k)}_{(v,T)}\}_{(v,T)\in S_k}\}_{k=1}^\infty$ of \gc E is called \emph{\partitioned} if 
\begin{enumerate}
\item for all $k,l\in\N$ and $(v,T)\in S_k$, $(u,V)\in S_l$, if $T\cap V\neq\emptyset$ then $k=l$, $v=u$ and $T=V$; \label{partone}
\item for all $v,w\in E^0$ with $s_E^{-1}(v)\cap r_E^{-1}(w)$ infinite, \label{parttwo}
\[ \left( s_E^{-1}(v)\cap r_E^{-1}(w)\right)\setminus \bigcup_{k=1}^\infty\bigcup_{(v,T)\in S_k} T \]
is infinite.
\end{enumerate}
\end{definition}

\begin{lemma} \label{l:fullproj}
Let $E$ be a stably complete graph and let $\{ p_{v} , s_{e} \mid v \in E^{0} , e \in E^{1} \}$ be a universal Cuntz--Krieger family generating $C^{*}(E)$.  Let $p$ be a full projection in $C^*(E)\otimes\BK$.  Then $p$ has \gc E $\{m_{(v,T)}\}_{(v,T)\in \mathbb{S}}$ satisfying that for all $v\in E^0$ there exists $T\subset s_E^{-1}(v)$ for which $(v,T)\in \mathbb{S}$.
\end{lemma}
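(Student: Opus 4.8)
The plan is to start from an arbitrary choice of \gc{E} for $p$ provided by Theorem~\ref{t:non-stable-kthy}, say $\{n_{(w,T)}\}_{(w,T)\in S_0}$, and to modify the finite index set step by step --- always keeping a valid \gc{E} for the \emph{same} projection $p$, i.e. keeping the displayed projection in the Murray--von Neumann class of $p$ --- until every vertex of $E$ occurs as a first coordinate. For an index set $S$ write $\pi(S)\subseteq E^0$ for the set of its first coordinates. There are two moves. First, if $w\in\pi(S)$ is a regular vertex, then condition~\eqref{gctwo} forces the only index over $w$ to be $(w,\emptyset)$, say with coefficient $n$; by (CK3) and the fact that the projections $s_es_e^*$, $e\in s_E^{-1}(w)$, are mutually orthogonal with $s_es_e^*\sim p_{r_E(e)}$, we have $n\,p_w\sim\bigoplus_{e\in s_E^{-1}(w)}n\,p_{r_E(e)}$, so we may delete $(w,\emptyset)$ and adjoin, for each $e\in s_E^{-1}(w)$, the index $(r_E(e),\emptyset)$ with coefficient increased by $n$; since $w$ supports a loop by~\eqref{cf:reg}, $w$ itself is one of the $r_E(e)$, so $w$ stays in $\pi(S)$ and the move adds precisely the out-neighbours of $w$. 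Secondly, if $w\in\pi(S)$ is an infinite emitter carrying an index $(w,T)$ with coefficient $n$, and $u$ is any vertex with $w\dm u$, then by~\eqref{cf:inf} we may choose $e_0\in s_E^{-1}(w)\cap r_E^{-1}(u)\setminus T$; as $s_{e_0}s_{e_0}^*\sim p_u$ sits below $p_w-\sum_{e\in T}s_es_e^*$ and is orthogonal to $\sum_{e\in T}s_es_e^*$, we get $n(p_w-\sum_{e\in T}s_es_e^*)\sim n\,p_u\oplus n(p_w-\sum_{e\in T\cup\{e_0\}}s_es_e^*)$, so we may replace $(w,T)$ by $(u,\emptyset)$ and $(w,T\cup\{e_0\})$, both with coefficient $n$; here $w$ survives and $u$ is added to $\pi(S)$.

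Both moves only enlarge $\pi(S)$, and $E^0$ is finite by~\eqref{cf:fin}, so it suffices to prove that whenever $\pi(S)\neq E^0$ one of the moves strictly enlarges $\pi(S)$; equivalently, that there is a regular $w\in\pi(S)$ with an out-neighbour outside $\pi(S)$, or an infinite emitter $w\in\pi(S)$ dominating a vertex outside $\pi(S)$. Suppose not. I claim $\pi(S)$ is then hereditary: given $w\in\pi(S)$ and a path $w=x_0\to x_1\to\dots\to x_k=u$, walk along it; as long as the current vertex is regular the next vertex lies in $\pi(S)$, and the first time a non-regular vertex $x_j\in\pi(S)$ is reached --- necessarily an infinite emitter, as it emits an edge --- we already have $u\in\pi(S)$ because $x_j\dm u$. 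Since the moves only enlarge $\pi$, the set $\pi(S_0)$ of first coordinates of the \emph{initial} \gc{E} satisfies $\pi(S_0)\subseteq\pi(S)$, so $\pi(S)$ would be a hereditary subset of $E^0$, distinct from $E^0$, containing $\pi(S_0)$.

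It remains to rule this out, which is the crux. If $H\subsetneq E^0$ were a \emph{saturated} hereditary subset containing $\pi(S_0)$, then for every $(w,T)\in S_0$ we would have $w\in H$, hence $p_w\in I_{(H,\emptyset)}$, hence $p_w-\sum_{e\in T}s_es_e^*=p_w\bigl(p_w-\sum_{e\in T}s_es_e^*\bigr)\in I_{(H,\emptyset)}$; so the projection $\bigoplus_{(w,T)\in S_0}n_{(w,T)}(p_w-\sum_{e\in T}s_es_e^*)$, being Murray--von Neumann equivalent to $p$, would lie in the proper ideal $I_{(H,\emptyset)}\otimes\BK$ of $C^*(E)\otimes\BK$, contradicting the fullness of $p$. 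Hence $E^0$ is the only saturated hereditary subset containing $\pi(S_0)$. Finally, in the stably complete graph $E$ every regular vertex supports a loop by~\eqref{cf:reg} and so belongs to the set of its own out-neighbours, from which it is immediate that every hereditary subset of $E^0$ is saturated; therefore $E^0$ is the only hereditary subset containing $\pi(S_0)$, contradicting the hereditary set found in the previous paragraph. Thus the process terminates with an index set $\mathbb{S}$ satisfying $\pi(\mathbb{S})=E^0$, as required. The main obstacle is this last step: converting fullness of $p$ into the statement that $\pi(S_0)$ has saturated hereditary closure $E^0$, and observing that in a stably complete graph saturation is automatic; the two moves on \gc{E} and the termination bookkeeping are routine.
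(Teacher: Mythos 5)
Your proposal is correct and follows essentially the same route as the paper's proof: the two ``moves'' are exactly the paper's identities \eqref{eq:wreg} (splitting a regular vertex via (CK3) and its loop) and \eqref{eq:wsing} (peeling off one edge of an infinite emitter using property~\eqref{cf:inf}), and the crux --- fullness of $p$ forces the saturated hereditary closure of the initial vertex set to be $E^0$, combined with the observation that in a stably complete graph every subset is saturated --- is the same. The only difference is bookkeeping: you run a greedy enlargement with a termination-by-contradiction argument, whereas the paper propagates along the explicit filtration $V_i=r_E(s_E^{-1}(V_{i-1}))\cup V_{i-1}$.
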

\begin{proof}
By Theorem~\ref{t:non-stable-kthy}, $p$ has \gc E $\{n_{(v,T)}\}_{(v,T)\in S}$. Let $H$ denote the hereditary closure in $E^0$ of the set
\[ V_0 = \{ v\in E^0 \mid \exists T\subset s_E^{-1}(v)\colon (v,T)\in S \} . 
\]
The projection $\bigoplus_{(v,T)\in S} n_{(v,T)} (p_v - \sum_{e\in T}s_e s_e^\ast)$ is Murray--von Neumann equivalent to $p$ and thus full, and it is Murray--von Neumann subequivalent to $n(\sum_{v\in V_0} p_v)$, where $n = \sum_{(v,T)\in S}n_{(v,T)}$.
It clearly follows that $\sum_{v\in V_0} p_v$ is full, and thus the hereditary and saturated set generated by $V_0$ is all of $E_0$. 
Since $E$ is stably complete, all subsets of $E^0$ are saturated and thus $H = E^0$.

Define recursively
\[ V_i = r_E(s_E^{-1}(V_{i-1}))\cup V_{i-1} . \]
Since $H=E^0$, there exists a $j$ for which $V_j=E^0$.
We will recursively construct \gc E $\{n^i_{(v,T)}\}_{(v,T)\in S_i}$ for $p$ satisfying
\[ V_i \subset \{ v\in E^0 \mid \exists T\subset s_E^{-1}(v)\colon (v,T)\in S_i \} .\]
Then the \gc E $\{n^j_{(v,T)}\}_{(v,T)\in S_j}$ will have the desired property.

Assume that \gc E $\{n^{i-1}_{(v,T)}\}_{(v,T)\in S_{i-1}}$ have been constructed (with $S_0=S$ and $n^0_{(v,T)}=n_{(v,T)}$).
Let $v\in V_i\setminus V_{i-1}$, and let $w\in V_{i-1}$ and $f\in s_E^{-1}(w)$ be given with $r_E(f)=v$.
If the vertex $w$ is regular, there exists a loop $f'$ based in $w$, and
\begin{equation} p_w = \sum_{e\in s_E^{-1}(w)} s_es_e^* \sim p_w\oplus p_v\oplus \bigoplus_{e\in s_E^{-1}(w)\setminus\{f,f'\}}p_{r_E(e)} . \label{eq:wreg} \end{equation}
Replace $p_w$ in
\[ p \sim \bigoplus_{(u,V)\in S_{i-1}} n^{i-1}_{(u,V)}\left(p_u-\sum_{e\in V}s_es_e^*\right) \]
using \eqref{eq:wreg} using that $(w,\emptyset)\in S_{i-1}$ as $w\in V_{i-1}$.
If the vertex $w$ is not regular, it is an infinite emitter and there exists $T\subset s_E^{-1}(w)$ with $(w,T)\in S_{i-1}$.
Since $E$ is stably complete, we may assume $f\notin T$ by replacing $f$ if necessary.
Then 
\begin{equation} p_w - \sum_{e\in T}s_es_e^* = p_w + s_fs_f^* - \sum_{e\in T\cup\{f\}}s_es_e^* \sim \left(p_w-\sum_{e\in T\cup\{f\}}s_es_e^*\right) + p_v . \label{eq:wsing}\end{equation}
Replace $p_w-\sum_{e\in T}s_es_e^*$ in
\[ p \sim \bigoplus_{(u,V)\in S_{i-1}} n^{i-1}_{(u,V)}\left(p_u-\sum_{e\in V}s_es_e^*\right) \]
using \eqref{eq:wsing} using that $(w,T)\in S_{i-1}$.
Make such replacements recursively for all the finitely many $v$ in $V_i\setminus V_{i-1}$ to create \gc E $\{n^i_{(u,V)}\}_{(u,V)\in S_i}$.
\end{proof}

\begin{definition}
Let $E$ be a graph and let $\{ p_{v} , s_{e} \mid v \in E^{0} , e\in E^{1} \}$ be a universal Cuntz--Krieger family generating $C^{*}(E)$.  Let $\{q_k\}_{k=1}^\infty$ be a sequence of projections in $C^*(E)\otimes\BK$ and let \gc E $\{n^{(k)}_{(v,T)}\}_{(v,T)\in S_k}$ for each $q_k$ be given.
The sequence $\{\{n^{(k)}_{(v,T)}\}_{(v,T)\in S_k}\}_{k=1}^\infty$ of \gc E is called \emph{\full} if for all $v\in E^0$ there exists $T\subset s_E^{-1}(v)$ for which $(v,T)\in S_k$ for some $k\in\N$.
For a projection $q$ in $C^*(E)\otimes\BK$, its \gc E $\{n_{(v,T)}\}_{(v,T)\in S}$ are called \full{} if the constant sequence $\{\{n_{(v,T)}\}_{(v,T)\in S}\}_{k=1}^\infty$ is full.
\end{definition}

\begin{lemma} \label{l:partitioned}
Let $E$ be a graph with finitely many vertices and let $\{ p_{v} , s_{e} \mid v \in E^{0} , E^{1} \}$ be a universal Cuntz--Krieger family generating $C^{*}(E)$. 
Then any sequence of projections $\{q_k\}_{k=1}^\infty$ in $C^*(E)\otimes\BK$ has a \partitioned{} sequence of \gc{E}.
If a sequence of projections $\{q_k\}_{k=1}^\infty$ in $C^*(E)\otimes\BK$ admits a \full{} sequence of \gc{E}, then it admits a \full{} \partitioned{} sequence of \gc{E}.
\end{lemma}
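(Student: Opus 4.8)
The plan is to build the desired sequences by induction on the index $k$, at each stage modifying the given \gc{E} so as to achieve the two properties in Definition~\ref{def:part} without destroying the \gc{E} property (and, in the full case, without destroying fullness). The starting point is Theorem~\ref{t:non-stable-kthy}, which guarantees that each $q_k$ has \emph{some} \gc{E} $\{n^{(k)}_{(v,T)}\}_{(v,T)\in S_k}$; the whole issue is to choose these compatibly. Since $E$ has finitely many vertices, each $S_k$ is a finite subset of $E^0\times 2^{E^1}$, and the sets $T$ appearing in a single $\gc{E}$ are, by \eqref{gcone}, finite subsets of the various $s_E^{-1}(v)$.

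First I would handle \eqref{partone}, disjointness of the edge sets across the whole family. Suppose $\{n^{(l)}_{(v,T)}\}_{(v,T)\in S_l}$ has been fixed for $l<k$; let $F_k=\bigcup_{l<k}\bigcup_{(v,T)\in S_l}T$, a \emph{finite} set of edges. Take any \gc{E} for $q_k$. For each $(v,T)\in S_k$, the edges in $T\cap F_k$ are finitely many edges in $s_E^{-1}(v)$, which is infinite by \eqref{gctwo}; so I can pick, for each such $e$, a ``fresh'' edge $e'\in s_E^{-1}(v)$ lying outside $F_k$ and outside all the (finitely many) $T$'s already seen at stage $k$, and replace $T$ by $(T\setminus\{e\})\cup\{e'\}$. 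The key point is that this substitution does not change the Murray--von Neumann class of $p_v-\sum_{e\in T}s_es_e^*$: it equals $p_v-\sum s_es_e^*-s_es_e^*+s_{e'}s_{e'}^*$, and $s_es_e^*\sim p_{r_E(e)}$, $s_{e'}s_{e'}^*\sim p_{r_E(e')}$, so the swap changes the class only if $r_E(e)\neq r_E(e')$, which I avoid by choosing $e'$ with the same range — possible precisely because $s_E^{-1}(v)\cap r_E^{-1}(r_E(e))$ is either infinite (fine) or, if finite, then it is contained in $F_k$-type finite sets only for finitely many edges, so... — actually the cleaner move is: whenever $s_E^{-1}(v)\cap r_E^{-1}(w)$ is \emph{finite}, those edges can never cause a clash at a later stage if I simply never include them except once, so the only edges I ever need to be careful about are those $e$ with $s_E^{-1}(s_E(e))\cap r_E^{-1}(r_E(e))$ infinite, and for those a same-range replacement is always available. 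Carrying this out finitely often (over the finite set $S_k$) produces \gc{E} for $q_k$ meeting \eqref{partone} against all earlier stages, and inductively over all $k$.

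Property \eqref{parttwo} then comes almost for free: for each pair $(v,w)$ with $s_E^{-1}(v)\cap r_E^{-1}(w)$ infinite, I additionally arrange during the above construction that at stage $k$ I never use up ``too much'' of this infinite set — concretely, enumerate the countably many such pairs $(v,w)$ and the sets $T$ requested at stage $k$, and when choosing the finitely many fresh replacement edges at stage $k$, avoid a prescribed infinite co-infinite subset of each relevant $s_E^{-1}(v)\cap r_E^{-1}(w)$. Since at every stage only finitely many edges are added to $\bigcup_{l\leq k}\bigcup_{(v,T)\in S_l}T$, and there are only countably many stages, a standard diagonalization (reserving, for pair number $i$, infinitely many edges to remain untouched from stage $i$ onward) leaves $\bigl(s_E^{-1}(v)\cap r_E^{-1}(w)\bigr)\setminus\bigcup_{k}\bigcup_{(v,T)\in S_k}T$ infinite for every such pair. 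For the second assertion, if the original sequence of \gc{E} is full, then fullness is the condition ``for each $v\in E^0$ some $(v,T)\in S_k$'', and the replacement operation above never deletes a pair $(v,T)$ from $S_k$ nor changes its first coordinate $v$ — it only alters the edge set $T$ — so fullness is preserved verbatim, and the resulting \partitioned{} sequence is still full.

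The main obstacle, and the step deserving the most care, is the replacement argument in the middle paragraph: one must verify that swapping a finite ``collision'' edge $e$ for a fresh edge $e'$ preserves the Murray--von Neumann class of the relevant difference projection $p_v-\sum_{e\in T}s_es_e^*$ inside $C^*(E)\otimes\BK$. This hinges on choosing $e'$ with $r_E(e')=r_E(e)$, which is possible as long as the needed edge pool $s_E^{-1}(v)\cap r_E^{-1}(r_E(e))$ is infinite; the edges for which it is finite are exactly the ones that, handled correctly, never produce a collision in the first place, so the induction hypothesis can be strengthened to track precisely which ``infinite-multiplicity'' edges have been consumed. Once that bookkeeping is set up, everything else is the routine finite/countable combinatorics sketched above.
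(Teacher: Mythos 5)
Your overall strategy is the same as the paper's: the paper's entire proof consists of observing that $p_v-s_es_e^*\sim p_v-s_fs_f^*$ for parallel edges $e,f$ and asserting that one may therefore ``replace the $T\neq\emptyset$ suitably'' to achieve \eqref{partone}--\eqref{parttwo} of Definition~\ref{def:part}, with fullness preserved because only the sets $T$, never the vertices $v$, are altered. Your induction on $k$, the finite bookkeeping of already-consumed edges, the reservation of an infinite co-infinite subset of each infinite $s_E^{-1}(v)\cap r_E^{-1}(w)$ for \eqref{parttwo}, and the closing remark on fullness are exactly the details the paper leaves to the reader, and they are correct as far as they go. (One small point on \eqref{parttwo}: it does not suffice to choose the \emph{replacement} edges outside the reserved sets; you must also replace any edge of an \emph{original} $T$ that happens to lie in a reserved set. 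Since $E^0$ is finite there are only finitely many pairs $(v,w)$ to reserve for, so no diagonalisation is needed.)

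The genuine problem is the step you yourself flag and then wave away: an edge $e\in T$ for which $s_E^{-1}(v)\cap r_E^{-1}(r_E(e))$ is finite. Your proposed fix --- ``simply never include them except once'' --- is not available, because the sets $T$ are not free parameters: they are constrained by the Murray--von Neumann class of $q_k$ through condition (3) of the definition of \gc{E}. Concretely, let $E^0=\{v,w,u\}$ with a single edge $e_0$ from $v$ to $w$, infinitely many edges from $v$ to $u$, and $w,u$ sinks; using the presentation of $V(C^*(E))$ from \cite{amp:nonstablekthy,HLMRT-non-stable-K-thy} one checks that \emph{every} choice of \gc{E} for a projection equivalent to $p_v-s_{e_0}s_{e_0}^*$ must contain a pair $(v,T)$ with $e_0\in T$, so for the constant sequence $q_k\sim p_v-s_{e_0}s_{e_0}^*$ condition \eqref{partone} cannot be arranged by any replacements whatsoever. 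Thus your argument cannot be completed for arbitrary graphs with finitely many vertices --- but neither can the paper's, whose one-line proof silently uses only parallel replacement. The lemma is only ever invoked (in Proposition~\ref{p:fullher}) for stably complete graphs, where item \eqref{cf:inf} of Definition~\ref{d:canonical} guarantees that every edge emitted by an infinite emitter has infinitely many parallel copies; under that standing hypothesis the troublesome case never occurs and your argument (equivalently, the paper's) goes through. You should either assume stable completeness or isolate precisely the condition you need, namely that every edge out of an infinite emitter has infinitely many parallel edges.
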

\begin{proof}
By Theorem~\ref{t:non-stable-kthy}, each $q_k$ has \gc E $\{n^{(k)}_{(v,T)}\}_{(v,T)\in S_k}$.  Using that $p_v-s_es_e^*\sim p_v-s_fs_f^*$ holds when $s_E(f)=s_E(g)=v$ and $r_E(f)=r_E(g)$, one may achieve \eqref{partone}--\eqref{parttwo} of Definition~\ref{def:part} by replacing the $T\neq\emptyset$ suitably.
As the replacements do not affect the set of $v\in E^0$ for which there exists $T\subset s_E^{-1}(v)$ with $(v,T)\in S_k$, being \full{} is not affected.
\end{proof}

\begin{lemma} \label{l:infemit_loop}
Let $E$ be a stably complete graph and let $\{p_u, s_e \mid u\in E^0, e\in E^1\}$  denote a universal Cuntz--Krieger family generating $C^*(E)$.
Let $v$ be an infinite emitter in $E$ supporting a loop, and let $T$ be a finite subset of $s_E^{-1}(v)$.
Then there exists a family $\{n_u\}_{v\dm u}$ of nonnegative integers for which
\[ p_v-\sum_{e\in T}s_es_e^* \sim p_v\oplus\bigoplus_{v\dm u} n_up_u \]
in $C^*(E)\otimes\BK$.
\end{lemma}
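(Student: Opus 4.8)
The plan is to reduce the statement to a finite, explicit Murray--von Neumann computation in $C^*(E)\otimes\BK$ by exploiting the structure of a stably complete graph around the infinite emitter $v$. Since $v$ is an infinite emitter supporting a loop, condition~\eqref{cf:inf to finite} of Definition~\ref{d:canonical} gives a regular vertex $w$ with $v\dm w$ and $w\dm v$. By condition~\eqref{cf:inf}, $v$ emits infinitely to every vertex it dominates; in particular the set $s_E^{-1}(v)\cap r_E^{-1}(u)$ is infinite for every $u$ with $v\dm u$, and all these edges are ``interchangeable'' in the sense that $p_v - s_es_e^*\sim p_v - s_fs_f^*$ whenever $s_E(e)=s_E(f)=v$ and $r_E(e)=r_E(f)$. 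Hence the specific finite set $T$ does not matter beyond its size, and after possibly enlarging $T$ we may assume $T$ contains, for each $u$ with $v\dm u$, a prescribed positive number of edges into $u$; so it suffices to prove the statement for one such $T$.

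The key step is a ``reabsorption'' move: removing one edge $e\in s_E^{-1}(v)$ with $r_E(e)=u$ from the defining projection trades it for a copy of $p_u$, exactly as in equation~\eqref{eq:wsing} of the proof of Lemma~\ref{l:fullproj}. Concretely, if $e\notin T$ and $r_E(e)=u$, then
\[
 p_v - \sum_{f\in T} s_fs_f^* \;=\; \Bigl(p_v - \sum_{f\in T\cup\{e\}} s_fs_f^*\Bigr) + s_es_e^* \;\sim\; \Bigl(p_v - \sum_{f\in T\cup\{e\}} s_fs_f^*\Bigr) \oplus p_u,
\]
using $s_e^*s_e = p_{r_E(e)} = p_u$. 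Because $s_E^{-1}(v)\cap r_E^{-1}(u)$ is infinite for each $u$ with $v\dm u$, we may perform this move finitely many times, once for each $u$ with $v\dm u$ (there are finitely many such $u$ since $E^0$ is finite), splitting off a copy of $p_u$ and never exhausting the available edges. This yields
\[
 p_v - \sum_{f\in T} s_fs_f^* \;\sim\; \Bigl(p_v - \sum_{f\in T'} s_fs_f^*\Bigr) \oplus \bigoplus_{v\dm u} p_u
\]
for some finite $T'\supseteq T$ with $|T'\cap r_E^{-1}(u)|\ge 1$ for every $v\dm u$. It remains to absorb the leftover term $p_v - \sum_{f\in T'} s_fs_f^*$ into the sum $p_v\oplus\bigoplus_{v\dm u} n_u p_u$; here the loop at $v$ is used. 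Since $v$ supports a loop, $v\dm v$, so $v$ emits infinitely to itself, and one can run the same reabsorption trick ``around the loop'': choose a loop edge $\ell$ at $v$ with $\ell\notin T'$ and write $s_\ell s_\ell^*$ as a subprojection of $p_v$ to see $p_v - \sum_{f\in T'} s_fs_f^*\sim (p_v-\sum_{f\in T'\cup\{\ell\}}s_fs_f^*)\oplus p_v$; iterating and then using that any further leftover is Murray--von Neumann subequivalent to a finite sum of vertex projections $p_u$ with $v\dm u$ (via $s_e^*s_e = p_{r_E(e)}$ applied to the finitely many edges one removes), one collapses everything into the desired form $p_v\oplus\bigoplus_{v\dm u} n_up_u$, adjusting the $n_u$ upward as needed and allowing $n_u=0$.

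The main obstacle I anticipate is bookkeeping the leftover term $p_v-\sum_{f\in T'}s_fs_f^*$ so that it is genuinely \emph{equivalent} to (not merely subequivalent to) a piece of the target sum, rather than producing an uncontrolled ``remainder'' that has to be shown to vanish. The clean way around this is to observe that $p_v-\sum_{f\in T'}s_fs_f^*$ is itself a nonzero projection dominated by $p_v$, and to argue that after splitting off one copy of $p_v$ via a loop edge as above, the residual projection $p_v-\sum_{f\in T''}s_fs_f^*$ with $T''=T'\cup\{\ell\}$ satisfies the \emph{same} hypotheses (infinite emitter $v$, finite removed set $T''$), so one is in a position to invoke the statement being proved in a controlled inductive fashion on the ``deficiency'' $\sum_{u}(\text{desired }n_u - \text{current count})$ — or, more transparently, simply to note that the single application
\[
 p_v - \sum_{f\in T} s_fs_f^* \;\sim\; p_v \;\oplus\; \bigoplus_{v\dm u} n_u p_u
\]
is obtained in one pass by choosing, from the start, $T$ large enough and performing all the splittings simultaneously, which is legitimate precisely because each relevant edge set is infinite. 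With the $n_u$ read off as the number of edges of $T$ (equivalently $T'$) landing in $u$, minus the one reserved to keep $T''$ nonempty, the equivalence follows, completing the proof.
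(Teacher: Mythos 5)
Your argument has a genuine gap at the final step, and it sits exactly where you anticipated trouble. The moves you describe --- trading an edge $e\notin T$ for a copy of $p_{r_E(e)}$, and trading a loop $\ell\notin T'$ for a copy of $p_v$ --- are correct, but each one strictly enlarges the removed set, so after any finite number of passes you are left with a residual projection $p_v-\sum_{f\in T''}s_fs_f^*$ for some finite $T''$. Since $v$ is an infinite emitter, relation (CK3) does not apply at $v$, so this residual is never literally a sum of vertex projections, and neither of your proposed remedies upgrades it to an \emph{equivalence}: the ``induction on the deficiency'' has no decreasing quantity (the $n_u$ are not fixed in advance and $T''$ only grows, so invoking the statement for $T''\supseteq T$ is circular), and ``doing all the splittings in one pass'' still terminates with a residual of the same form; mutual subequivalence of that residual with $p_v$ does not give equivalence. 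In fact your computation never uses the regular vertex $w$ beyond citing its existence, and the lemma is \emph{false} without it: for the graph with one vertex $v$ and infinitely many loops (which violates condition~\eqref{cf:inf to finite} of Definition~\ref{d:canonical} but satisfies everything your argument actually invokes), $C^*(E)\cong\mathcal O_\infty$ and $[p_v-\sum_{e\in T}s_es_e^*]=(1-|T|)[p_v]$ in $K_0$, whereas $\left[p_v\oplus\bigoplus_{v\dm u}n_up_u\right]=(1+n_v)[p_v]\geq [p_v]$, so the asserted equivalence fails whenever $T\neq\emptyset$. Any correct proof must therefore make essential use of $w$.

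The paper's proof does so as follows. Writing $T=\{e_1,\dots,e_t\}$, it chooses $t$ fresh edges $F=\{f_1,\dots,f_t\}$ from $v$ to $w$ and splits $p_v-\sum_{e\in T}s_es_e^*=\bigl(p_v-\sum_{e\in T\cup F}s_es_e^*\bigr)+\sum_{e\in F}s_es_e^*$. The residual $p_v-\sum_{e\in T\cup F}s_es_e^*$ is then left untouched; instead one proves
\[
\sum_{e\in F}s_es_e^*\;\sim\;\sum_{e\in T\cup F}s_es_e^*\oplus\bigl(\text{vertex projections }p_u\text{ with }v\dm u\bigr),
\]
using that $w$ is regular, so that $p_w=\sum_{e\in s_E^{-1}(w)}s_es_e^*$ holds exactly by (CK3), that $w$ supports a loop $l$ with $s_ls_l^*\sim p_w\sim s_{f_i}s_{f_i}^*$, and that $w$ emits to each $r_E(e_i)$ by condition~\eqref{cf:emit}. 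Adding the common orthogonal residual to both sides of this equivalence recombines it with $\sum_{e\in T\cup F}s_es_e^*$ to reconstitute $p_v$ on the nose. That is the idea your proposal is missing: the leftover term is never converted into vertex projections; it is cancelled against a piece produced on the other side.
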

\begin{proof}
Since $E$ is stably complete, there exists a regular vertex $w$ with $v\dm w$ and $w\dm v$.  Since $w$ is regular, it supports a loop $l$.
Write $T=\{e_1,\ldots, e_t\}$.
Since $v$ emits infinitely to $w$, there exists $f_1,\ldots, f_t$ with $f_i\neq f_j$ when $i\neq j$, $s_E(f_i)=v$, $r_E(f_i)=w$ and $f_i\notin T$ for all $i\in\{1,\ldots,t\}$.
Put $F=\{f_1,\ldots,f_t\}$.
Then 
\[ p_v-\sum_{e\in T}s_es_e^* = \left(p_v-\sum_{e\in T\cup F}s_es_e^*\right) + \sum_{e\in F}s_es_e^* \]
in $C^*(E)$.
Now, let $\{e_{i,j}\}_{i,j}$ denote a system of matrix units for $\BK$, and note since $s_{f_i}s_{f_i}^*\sim s_ls_l^*$ for all $i$ that
\begin{align*}
\sum_{e\in F}s_es_e^*\otimes e_{1,1} &\sim \sum_{i=1}^t p_w\otimes e_{i+1,i+1} \\
&= \sum_{i=1}^t\sum_{e\in s_E^{-1}(w)} s_es_e^*\otimes e_{i+1,i+1} \\
&\sim \sum_{i=1}^t \left( s_{f_i}s_{f_i}^* + \sum_{e\in s_E^{-1}(w)\setminus\{l\}} s_es_e^*\right)\otimes e_{i+1,i+1} \\
&\sim \sum_{i=1}^t \left( s_{f_i}s_{f_i}^*\otimes e_{1,1} + \sum_{e\in s_E^{-1}(w)\setminus\{l\}} s_es_e^*\otimes e_{i+1,i+1}\right) \\
&= \sum_{e\in F}s_es_e^*\otimes e_{1,1} + \sum_{i=1}^t\sum_{e\in s_E^{-1}(w)\setminus\{l\}} s_es_e^*\otimes e_{i+1,i+1} .
\end{align*}
Note for all $i\in\{1,\ldots, t\}$ that $w\dm r_E(e_i)$ and thereby that there exists an edge $g_i$ with $s_E(g_i)=w$ and $r_E(g_i)=r_E(e_i)$.
Then $s_{g_i}s_{g_i}^*\sim s_{e_i}s_{e_i}^*$ for all $i$, and thereby
\[
\sum_{e\in F}s_es_e^*\otimes e_{1,1} \sim \sum_{e\in T\cup F}s_es_e^*\otimes e_{1,1} + \sum_{i=1}^t\sum_{e\in s_E^{-1}(w)\setminus\{l,g_i\}} s_es_e^*\otimes e_{i+1,i+1} .
\]
Since both of the two equivalent projections above are orthogonal to \[\left(p_v-\sum_{e\in T\cup F}s_es_e^*\right)\otimes e_{1,1},\] we conclude that
\[ \left(p_v-\sum_{e\in T}s_es_e^*\right)\otimes e_{1,1} \sim p_v\otimes e_{1,1} + \sum_{i=1}^t\sum_{e\in s_E^{-1}(w)\setminus\{l,g_i\}} s_es_e^*\otimes e_{i+1,i+1} \]
 which is the desired as $s_es_e^*\sim p_{r_E(e)}$ for all edges $e$.
\end{proof}

\begin{lemma} \label{l:infemit_fin_dom}
Let $E$ be a stably complete graph and let $\{p_u, s_e \mid u\in E^0, e\in E^1\}$  denote a universal Cuntz--Krieger family generating $C^*(E)$.
Let $v$ be an infinite emitter in $E$ not supporting a loop, and assume that there exists a regular vertex $w$ with $w\dm v$.
Let $n$ be an integer, and let $T$ be a finite subset of $s_E^{-1}(v)$.
Then there exists a family $\{n_u\}_{w\dm u}$ of nonnegative integers for which
\[ p_w\oplus  n\left(p_v-\sum_{e\in T}s_es_e^*\right) \sim p_w\oplus np_v\oplus \bigoplus_{w\dm u} n_up_u \]
in $C^*(E)\otimes\BK$.
\end{lemma}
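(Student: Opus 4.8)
The plan is to reduce the statement to a single absorption property of $p_w$, driven by the loop that stable completeness provides at the regular vertex~$w$. Throughout, write $q=p_v-\sum_{e\in T}s_es_e^*$, so that the left-hand side of the asserted equivalence is $p_w\oplus nq$.

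First I would dispose of the ``deficiency'' $\sum_{e\in T}s_es_e^*$. Since $p_v=q+\sum_{e\in T}s_es_e^*$ is an orthogonal sum and $s_es_e^*\sim p_{r_E(e)}$ for every edge, this gives $p_v\sim q\oplus\bigoplus_{e\in T}p_{r_E(e)}$ in $C^*(E)\otimes\BK$, hence $np_v\sim nq\oplus\bigoplus_{e\in T}np_{r_E(e)}$. So the right-hand side of the asserted equivalence is, up to $\sim$, equal to $p_w\oplus nq\oplus\bigoplus_{e\in T}np_{r_E(e)}\oplus\bigoplus_{w\dm u}n_up_u$. Because $v$ supports no loop we have $r_E(e)\neq v$, and concatenating a path witnessing $w\dm v$ with the edge $e$ shows $w\dm r_E(e)$, for every $e\in T$. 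Hence it is enough to produce nonnegative integers $\{n_u\}_{w\dm u}$ with
\[
 p_w\;\sim\;p_w\oplus\bigoplus_{e\in T}np_{r_E(e)}\oplus\bigoplus_{w\dm u}n_up_u ;
\]
adding $nq$ to both sides and regrouping $nq\oplus\bigoplus_{e\in T}np_{r_E(e)}\sim np_v$ then yields the lemma.

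Next I would prove this absorption statement. Since $w$ is regular it supports a loop $l$ by~\eqref{cf:reg} of Definition~\ref{d:canonical}; from $s_ls_l^*\sim p_{r_E(l)}=p_w$ and relation (CK3) one gets $p_w\sim p_w\oplus\bigoplus_{e\in s_E^{-1}(w)\setminus\{l\}}p_{r_E(e)}$, and iterating this gives, for every $k\in\N$,
\[
 p_w\;\sim\;p_w\oplus k\left(\bigoplus_{e\in s_E^{-1}(w)\setminus\{l\}}p_{r_E(e)}\right) .
\]
For $u\neq w$ with $w\dm u$, relation~\eqref{cf:emit} of Definition~\ref{d:canonical} supplies an edge $w\to u$, which is not the loop $l$, so $p_u$ appears in $\bigoplus_{e\in s_E^{-1}(w)\setminus\{l\}}p_{r_E(e)}$ and therefore with multiplicity at least $k$ in the displayed sum. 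The one case this leaves out is an edge $e\in T$ with $r_E(e)=w$: then $e$ witnesses $v\dm w$, and since also $w\dm v$ and $v\neq w$ (an infinite emitter is never regular), $w$ lies on a cycle of length $\geq 2$; cutting that cycle at its successive returns to $w$ exhibits a simple cycle based at $w$ of length $\geq 2$, so with $l$ the vertex $w$ carries two distinct simple cycles and, by~\eqref{cf:loops} of Definition~\ref{d:canonical}, a second loop --- whence $p_w$ itself also appears in $\bigoplus_{e\in s_E^{-1}(w)\setminus\{l\}}p_{r_E(e)}$. Taking $k$ sufficiently large, the multiplicity of $p_u$ in $k\left(\bigoplus_{e\in s_E^{-1}(w)\setminus\{l\}}p_{r_E(e)}\right)$ is then at least $n\lvert\{e\in T:r_E(e)=u\}\rvert$ for every $u$ with $w\dm u$; splitting off $\bigoplus_{e\in T}np_{r_E(e)}$ and letting $n_u$ be the leftover multiplicity of $p_u$ gives the required absorption, and the lemma follows.

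The main obstacle is exactly the edge-into-$w$ case: a single loop at $w$ only lets $p_w$ absorb vertex projections $p_u$ with $u\neq w$, so handling an edge of $T$ that returns to $w$ genuinely requires stable completeness to force a second loop there, and one needs the small combinatorial observation that a cycle through $v$ decomposes into simple cycles based at $w$ (one of length $\geq 2$) in order to invoke~\eqref{cf:loops} of Definition~\ref{d:canonical}. Everything else is routine bookkeeping with the $\oplus$-calculus set up before Definition~\ref{def:part}.
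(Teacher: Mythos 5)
Your proof is correct and follows essentially the same route as the paper's: pump $p_w$ with the loop $l$ via (CK3) and use condition~\eqref{cf:emit} of Definition~\ref{d:canonical} to match each $p_{r_E(e)}$, $e\in T$, against an edge emitted by $w$; you merely organize the bookkeeping by multiplicities of vertex projections rather than by explicit matrix units. Your explicit treatment of the case $r_E(e)=w$ for some $e\in T$ --- where condition~\eqref{cf:loops} is needed to produce a second loop at $w$ so that the matching edge can be chosen different from $l$ --- is a point the paper's proof passes over silently (its edge $g_i$ must lie in $s_E^{-1}(w)\setminus\{l\}$), so this is a worthwhile addition.
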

\begin{proof}
It suffices to prove the lemma for $n=1$.
Since $w$ is regular, it supports a loop $l$.
Hence $s_ls_l^*\sim p_w$.
Write $T=\{e_1,\ldots,e_t\}$, and let $\{e_{i,j}\}_{i,j}$ denote a system of matrix units for $\BK$.
Since $w\dm r_E(e_i)$, there exists for each $i$ an edge $g_i$ with $s_E(g_i)=w$ and $r_E(g_i)=r_E(e_i)$.
Note that $s_{g_i}s_{g_i}^*\sim s_{e_i}s_{e_i}^*$ for all $i$.
Hence,
\begin{align*}
p_w\otimes e_{1,1} &= \sum_{e\in s_E^{-1}(w)} s_es_e^*\otimes e_{1,1} \\
 &\sim p_w\otimes e_{1,1} + \sum_{e\in s_E^{-1}(w)\setminus\{l\}} s_es_e^*\otimes e_{t+1,t+1} \\
&\sim p_w\otimes e_{1,1} + \sum_{e\in s_E^{-1}(w)\setminus\{l\}} s_es_e^*\otimes e_{t,t} + \sum_{e\in s_E^{-1}(w)\setminus\{l\}} s_es_e^*\otimes e_{t+1,t+1} \\
&\sim p_w\otimes e_{1,1} +\sum_{i=1}^t \sum_{e\in s_E^{-1}(w)\setminus\{l\}} s_es_e^*\otimes e_{i+1,i+1} \\
&\sim p_w\otimes e_{1,1} +\sum_{i=1}^t\left( s_{e_i}s_{e_i}^*\otimes e_{i+1,i+1} + \sum_{e\in s_E^{-1}(w)\setminus\{l,g_i\}} s_es_e^*\otimes e_{i+1,i+1}\right) \\
&\sim p_w\otimes e_{1,1} + \sum_{e\in T}s_es_e^*\otimes e_{1,1} + \sum_{i=1}^t \sum_{e\in s_E^{-1}(w)\setminus\{l,g_i\}} s_es_e^*\otimes e_{i+1,i+1}.
\end{align*}
Since both the first and the last of the above equivalent projections are orthogonal to $\left(p_v-\sum_{e\in T}s_es_e^*\right)\otimes e_{1,1}$, we conclude that
\[ \left(p_w + \left(p_v-\sum_{e\in T}s_es_e^*\right)\right)\otimes e_{1,1} \sim \left(p_w + p_v\right)\otimes e_{1,1} + \sum_{i=1}^t\sum_{e\in s_E^{-1}(w)\setminus\{l,g_i\}} s_es_e^*\otimes e_{i+1,i+1}
 \]
 which is the desired as $s_es_e^*\sim p_{r_E(e)}$ for all edges $e$.
\end{proof}

\begin{lemma} \label{l:infemit_fin_nodom}
Let $E$ be a stably complete graph and let $\{p_u, s_e \mid u\in E^0, e\in E^1\}$  denote a universal Cuntz--Krieger family generating $C^*(E)$.
Let $v$ be an infinite emitter in $E$ not supporting a loop, and assume that there exists no regular vertex $w$ with $w\dm v$.
Let $T$ be a finite subset of $s_E^{-1}(v)$.
Then there exists a $\ast$-automorphism $\phi\colon C^*(E)\otimes\BK\to C^*(E)\otimes\BK$ with the following properties:
\begin{enumerate}
\item For all $u\in E^0\setminus\{v\}$ and all finite subsets $U\subset s_E^{-1}(u)$ with $v\notin r_E(U)$, \label{it:autone}
\[ \phi\left(p_u-\sum_{e\in U}s_es_e^*\right) \sim p_u-\sum_{e\in U}s_es_e^* ,\]

\item for all $u\in E^0\setminus\{v\}$ and all finite subsets $U\subset s_E^{-1}(u)$ with $v\in r_E(U)$, there exists a finite set $U'$ for which $U\subset U'\subset s_E^{-1}(u)$ and \label{it:auttwo}
\[ \phi\left(p_u-\sum_{e\in U}s_es_e^*\right) \sim p_u-\sum_{e\in U'}s_es_e^*, \]

\item and for all $T'\subset T$ there exists a family $\{n_u\}_{v\dm u}$ of nonnegative integers for which \label{it:autthree}
\[ \phi\left(p_v-\sum_{e\in T'}s_es_e^*\right) \sim p_v+ \bigoplus_{v\dm u} n_up_u .\]
\end{enumerate}
\end{lemma}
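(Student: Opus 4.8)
The plan is to realize $\phi$ as a composition of a move-equivalence isomorphism with a graph isomorphism: since $v$ emits only finitely many edges in $T$, we out-split $v$ so as to peel off a regular vertex carrying exactly the edges of $T$, and then collapse (or remove) that vertex; the resulting graph is isomorphic to $E$, because $v$ and every vertex dominating $v$ emits \emph{infinitely} to each vertex it dominates, so every edge-multiplicity that the surgery disturbs is already $\aleph_0$ and survives.

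\textbf{Setup.} If $T=\emptyset$ take $\phi=\id$; all three properties hold with $n_u=0$, so assume $T\neq\emptyset$. First record the structural consequences of the hypotheses. Every $u$ with $u\dm v$ emits to $v$ by \eqref{cf:emit} of Definition~\ref{d:canonical}, hence is not a sink, hence (as no regular vertex dominates $v$) is an infinite emitter; if such a $u$ supported a loop then \eqref{cf:inf to finite} of Definition~\ref{d:canonical} would give a regular vertex dominating $u$, hence $v$, a contradiction, so no such $u$ supports a loop; and by \eqref{cf:inf} of Definition~\ref{d:canonical}, $v$ and every $u\dm v$ emits infinitely to each vertex it dominates. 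Also $v$ supports no loop, so $r_E(e)\neq v$ for every $e\in s_E^{-1}(v)$.

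\textbf{Construction of $\phi$.} Out-split $v$ along the partition $\E_1=T$, $\E_2=s_E^{-1}(v)\setminus T$ of $s_E^{-1}(v)$; as $\E_1$ is finite, Proposition~\ref{t:out-splitting-BP} gives a $*$-isomorphism $\phi_1\colon C^*(E)\to C^*(E_{\os})$ with $\phi_1(p_v)=q_{v^1}+q_{v^2}$, $\phi_1(p_w)=q_w$ for $w\neq v$, $\phi_1(s_e)=t_e$ for $e\notin r_E^{-1}(v)$, and $\phi_1(s_e)=t_{e^1}+t_{e^2}$ for $e\in r_E^{-1}(v)$. In $E_{\os}$, $v^1$ is regular with $s_{E_{\os}}^{-1}(v^1)=T$ and $v^2$ is an infinite emitter. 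If $v$ is a source in $E$, then $v^1$ is a regular source and we delete it by Move~(S); otherwise $v^1$ is a regular non-source supporting no loop and we collapse it (Proposition~\ref{t:collapsing}). Either way we obtain a $*$-isomorphism $\phi_2\colon C^*(E_{\os})\otimes\BK\to C^*(G)\otimes\BK$. One checks that $G\cong E$ via the bijection fixing all vertices except $v^2\mapsto v$: $G$ differs from $E$ only in that the edges out of $v^2$ form a cofinite subset of those out of $v$, and (when $v$ is not a source) extra edges run from each source $s_E(f)$ of an in-edge $f$ of $v$ to each $r_E(e)$, $e\in T$; for every relevant ordered pair of vertices the edge-multiplicity equals $\aleph_0$ both in $E$ and in $G$, so such an isomorphism exists. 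Let $\phi_3\colon C^*(G)\otimes\BK\to C^*(E)\otimes\BK$ be the induced isomorphism, so $\phi_3(r_{v^2})=p_v$ and $\phi_3(r_w)=p_w$ for $w\neq v$, and set $\phi=(\phi_3\otimes\id_{\BK})\circ\phi_2\circ(\phi_1\otimes\id_{\BK})$. Tracking vertex projections through $\phi_1,\phi_2,\phi_3$ already gives $\phi(p_w)\sim p_w$ for all $w\neq v$.

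\textbf{Property~(3).} Since $v^1$ is regular with $s_{E_{\os}}^{-1}(v^1)=T$, we get $\sum_{e\in T}t_et_e^*=q_{v^1}$, hence $\phi_1\bigl(p_v-\sum_{e\in T}s_es_e^*\bigr)=q_{v^2}$; as $\phi_2$ preserves the Murray--von Neumann class of $q_{v^2}$ and $\phi_3(r_{v^2})=p_v$, this yields $\phi\bigl(p_v-\sum_{e\in T}s_es_e^*\bigr)\sim p_v$. For $T'\subset T$, write $p_v-\sum_{e\in T'}s_es_e^*=\bigl(p_v-\sum_{e\in T}s_es_e^*\bigr)+\sum_{e\in T\setminus T'}s_es_e^*$ as an orthogonal sum; applying the $*$-homomorphism $\phi$ and using $\phi(s_es_e^*)\sim\phi(p_{r_E(e)})\sim p_{r_E(e)}$ for $e\in T\setminus T'$ (valid as $r_E(e)\neq v$), we obtain $\phi\bigl(p_v-\sum_{e\in T'}s_es_e^*\bigr)\sim p_v\oplus\bigoplus_{e\in T\setminus T'}p_{r_E(e)}$, which is of the required form with $n_u=|\{e\in T\setminus T'\mid r_E(e)=u\}|$ (each such $r_E(e)$ satisfies $v\dm r_E(e)$).

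\textbf{Properties~(1) and~(2); the main obstacle.} Let $u\neq v$ and let $U\subset s_E^{-1}(u)$ be finite. If $v\notin r_E(U)$, no edge of $U$ meets $v$, so $\phi_1$ fixes each $s_e$ ($e\in U$) and the corresponding edges of $E_{\os}$ are untouched by the surgery at $v^1$; unwinding the explicit isomorphisms gives $\phi\bigl(p_u-\sum_{e\in U}s_es_e^*\bigr)\sim p_u-\sum_{e\in U}s_es_e^*$. If $v\in r_E(U)$, write $U=U_0\sqcup U_1$ with $U_1=\{e\in U\mid r_E(e)=v\}\neq\emptyset$; then $u\dm v$, so $u$ is an infinite emitter emitting infinitely to $v$ and to every $r_E(e)$, $e\in T$, and $v$ is not a source, so the collapse is performed. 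For $e_0\in U_1$ one has $\phi_1(s_{e_0}s_{e_0}^*)=t_{e_0^1}t_{e_0^1}^*+t_{e_0^2}t_{e_0^2}^*$, and under the collapse the edge $e_0^1$ into $v^1$ is replaced by the edges $[e_0^1e]\colon u\to r_E(e)$ ($e\in T$), while $e_0^2$ becomes an edge into $v^2$. Choosing for each $e_0\in U_1$ and $e\in T$ a fresh edge $h_{e_0,e}\in s_E^{-1}(u)$ with $r_E(h_{e_0,e})=r_E(e)$, disjoint from $U$, and setting $U'=U\cup\{h_{e_0,e}\}$, one unwinds the formulas to obtain $\phi\bigl(p_u-\sum_{e\in U}s_es_e^*\bigr)\sim p_u-\sum_{e\in U'}s_es_e^*$. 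The delicate point is precisely this last verification: one must apply the collapsing isomorphism of~\cite{as:geo} in a form that sends a generalized vertex projection built from edges avoiding $v^1$ to the corresponding generalized vertex projection in $G$, and sends the range projection $q_{v^1}$ of an edge entering $v^1$ to the sum of the range projections of its replacement edges, and then match the inserted edges against the enlargement $U\subset U'$. This bookkeeping, carried out consistently across $\phi_1$, $\phi_2$, $\phi_3$, is the heart of the argument.
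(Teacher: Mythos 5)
Your proposal is correct and follows essentially the same route as the paper: out-split $v$ along the partition $\{T,\ s_E^{-1}(v)\setminus T\}$, collapse the resulting regular vertex, observe that the new graph is isomorphic to $E$ because every vertex dominating $v$ is an infinite emitter whose relevant edge-multiplicities are already infinite, and track the projections through the composite automorphism. The ``delicate bookkeeping'' you defer for properties (1) and (2) is precisely the computation the paper writes out explicitly (expanding $\bar p_{v^2}=\sum_{f\in T}\bar s_f\bar s_f^*$ so that each collapsed edge $[e^2f]$ is matched with a fresh parallel edge $g_f\in s_E^{-1}(u)$ having $r_E(g_f)=r_E(f)$), and it goes through exactly as you describe; your separate treatment via Move (S) of the case where $v$ is a source is a detail the paper's proof does not single out.
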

\begin{proof}
Define $\E_2=T$, and $\E_1=s_E^{-1}(v)\setminus\E_2$.
Let $E_\os$ denote the graph obtained from $E$ by out-splitting $v$ into $v^1$ and $v^2$ with respect to~$\E_1$ and~$\E_2$.
Since $v$ supports no loops and $\E_2$ is finite, the vertex $v^2$ of $E_\os$ is a regular vertex not supporting a loop.
Let $G$ denote the graph obtained from $E_\os$ by collapsing $v^2$.

Recall that $G$ is
\begin{align*}
G^0 &= E^0 \setminus \{v\} \cup \{v^1\} \\ 
G^1 &= \left( E^1\setminus \left(r_E^{-1}(v)\cup T\right)\right) \cup \{e^1 \mid e\in r_E^{-1}(v)\}\cup \{[e^2f]\mid e\in r_E^{-1}(v),f\in T\}
\end{align*}
with $r_G$ and $s_G$ defined as follows:
\begin{align*}
r_G(g) &= \begin{cases}
r_E(g) & \textnormal{when } g\in E^1\setminus \left(r_E^{-1}(v)\cup T\right) \\
v^1 & \textnormal{when } g=e^1, e\in r_E^{-1}(v) \\
r_E(f) & \textnormal{when } g=[e^2f_i], e\in r_E^{-1}(v), f\in T
\end{cases} \\
s_G(g)&= \begin{cases}
s_E(g) & \textnormal{when } g\in E^1\setminus \left(r_E^{-1}(v)\cup s_E^{-1}(v)\right) \\
v^1 & \textnormal{when } g\in s_E^{-1}(v)\setminus T \\
s_E(e) & \textnormal{when } g=e^1, e\in r_E^{-1}(v) \\
s_E(e) & \textnormal{when } g=[e^2f], e\in r_E^{-1}(v), f\in T
\end{cases}
\end{align*}

For all $f\in T$, there are infinitely many edges from $v$ to $r_E(f)$.  
Since no regular vertex emits to $v$, there are infinitely many edges from $s_E(e)$ to $r_E(f)$ for all $f\in T$ and all $e\in r_E^{-1}(v)$.  By renumbering these, one can therefore construct a graph isomorphism  $\Phi\colon G\to E$ with $\Phi^0(v^1)=v$ and $\Phi^0(u)=u$ for $u\in G^0\setminus\{v^1\}$.

Let $\{q_u, t_e \mid u\in G^0, e\in G^1\}$, $\{\bar p_v, \bar s_e \mid u\in E_\os^0, e\in E_\os^1\}$, and $\{p_u, s_e \mid u\in E^0, e\in E^1\}$  denote universal Cuntz--Krieger families generating $C^*(G)$, $C^*(E_\os)$, and $C^*(E)$ respectively.
Let $\Phi^*$ denote the \siso{} $C^*(G)\otimes\BK\to C^*(E)\otimes\BK$ induced by $t_e\mapsto s_{\Phi^1(e)}$ and $q_u\mapsto p_{\Phi^0(u)}$.
Let $\psi_\os\colon C^*(E)\to C^*(E_\os)$ denote the \siso{} given by Proposition~\ref{t:out-splitting-BP},
let $\psi_\col\colon C^*(E_\os)\otimes\BK\to C^*(G)\otimes\BK$ denote the \siso{} given by Proposition~\ref{t:collapsing}, and let $\psi$ denote the composite
\[ C^*(E)\otimes\BK \xrightarrow{\psi_\os\otimes id_{\BK}} C^*(E_\os)\otimes\BK \xrightarrow{\psi_\col} C^*(G)\otimes\BK . \]
Let $\phi$ denote the $\ast$-automorphism given as the composite $\Phi^* \circ \psi$.

Let $u\in E^0\setminus\{v\}$ and let $e\in s_E^{-1}(u)$.
Since $u\neq v$, $\psi_\col(\bar p_u)\sim q_u$ and thereby $\psi(p_u)\sim q_u$, hence $\phi(p_u)\sim p_u$.
Assume first that $r_E(e)\neq v$.
As $s_es_e^*\sim p_{r_E(e)}$ and $r_G(e)=r_E(e)$, $\psi(s_es_e^*)\sim t_et_e^*$ and thereby $\phi(s_es_e^*)\sim s_{\Phi^1(e)}s_{\Phi^1(e)}^*\sim s_es_e^*$.
This establishes~\eqref{it:autone}.
Assume now that $r_E(e)=v$.
Then 
\[ \psi_\os(s_es_e^*) = \bar s_{e^1}\bar s_{e^1}^* +\bar s_{e^2}\bar s_{e^2}^* = \bar s_{e^1}\bar s_{e^1}^* +\bar s_{e^2}\bar p_{v^2}\bar s_{e^2}^* = \bar s_{e^1}\bar s_{e^1}^* +\sum_{f\in T} \bar s_{e^2}\bar s_{f}\bar s_{f}^*\bar s_{e^2}^* \]
with $\psi_\col(\bar s_{e^1}s_{e^1}^*)\sim\psi_\col(\bar p_{v^1})\sim q_{v^1}\sim t_{e^1}t^*_{e^1}$ and
$\psi_\col(\bar s_{e^2}\bar s_{f}\bar s_{f}^*\bar s_{e^2}^*) \sim \psi_\col(\bar p_{r_E(f)})\sim q_{r_E(f)} \sim t_{[e^2f]}t_{[e^2f]}^*$ for all $f\in T$.
Since $u=s_E(e)\dm v$, $u$ is an infinite emitter.  So there are distinct edges $\{g_f\}_{f\in T}$ with $s_E(g_f)=u$ and $r_E(g_f)=r_E(f)$, and thereby $s_{g_f}s_{g_f}^*\sim s_{\Phi^1([e^2f])}s_{\Phi^1([e^2f])}^*$ for all $f\in T$.
Hence
\[ \phi(s_es_e^*)\sim s_{\Phi^1(e^1)}s^*_{\Phi^1(e^1)} + \sum_{f\in T} s_{\Phi^1([e^2f])}s_{\Phi^1([e^2f])}^* 
\sim s_es_e^* + \sum_{f\in T} s_{g_f}s_{g_f}^* \]
which together with~\eqref{it:autone} establishes~\eqref{it:auttwo}.

To prove~\eqref{it:autthree}, let $T'\subset T$, and fix an edge $g$ with $r_E(g)=v$.
Then
\begin{align*} 
\psi_\os\left(p_v-\sum_{e\in T'}s_es_e^*\right) &= \bar p_{v^1} + \bar p_{v^2} - \sum_{e\in T'} \bar s_e\bar s_e^* \\
&= \bar p_{v^1} + \sum_{e\in T\setminus T'}\bar s_e\bar s_e^* \\
&\sim \bar p_{v^1} + \sum_{e\in T\setminus T'}\bar s_{g^2}\bar s_e\bar s_e^*s_{g^2}^*,
\end{align*}
with $\psi_\col(\bar s_{g^2}\bar s_e\bar s_e^*s_{g^2}^*)\sim \psi_\col(\bar p_{r_E(e)})\sim q_{r_E(e)} \sim t_{[g^2e]}t_{[g^2e]}^*$ for all $e\in T\setminus T'$.
Hence
\[ \psi\left(p_v-\sum_{e\in T'}s_es_e^*\right)\sim q_{v^1} + \sum_{e\in T\setminus T'}t_{[g^2e]}t_{[g^2e]}^*  \]
with $s_{\Phi^1([g^2e])}s_{\Phi^1([g^2e])}^*\sim s_gs_es_e^*s_g^*$ for all $e\in T\setminus T$, implying that
\[ \phi\left(p_v-\sum_{e\in T'}s_es_e^*\right)\sim p_v + \sum_{e\in T\setminus T'}s_gs_es_e^*s_g^*  \]
which gives the desired as $s_gs_es_e^*s_g^*\sim p_{r_E(e)}$ with $v\dm r_E(e)$ for all $e\in T\setminus T'$.
\end{proof}

We write $p\gtrsim q$ 
for projections $p$ and $q$ in the \csa{} $\A$
if there exists a projection $q'\in\A\otimes\BK$ with $p\geq q'$ and $q'\sim q$.

\begin{lemma} \label{l:dominate}
Let $\A$ be a \csa{}, and
let $p$ and $q$ be projections in $\A\otimes\BK$. Assume that $p\gtrsim q$. Then $\infty p\sim q\oplus\infty p\sim\infty (q\oplus p)$ in $\Mul(\A\otimes\BK)$.
\end{lemma}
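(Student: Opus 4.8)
The plan is to reduce to the case $q\le p$ and then ``peel off'' one copy of $q$ at a time. Set $B=\A\otimes\BK$; it is stable, and I fix once and for all isometries $s_1,s_2,\dots\in\Mul(B)$ with $\sum_k s_ks_k^*$ converging strictly to $1$, with respect to which $n(\cdot)$ and $\oplus$ are formed; all Murray--von Neumann equivalences below will take place in $\Mul(B)$. I would use freely the following elementary facts: an orthogonal sum of projections is equivalent to their $\oplus$ (via the partial isometry $s_1a+s_2b$); $\oplus$ is associative and commutative up to unitary equivalence; $\infty(\cdot)$ respects $\sim$ (if $v^*v=a$ and $vv^*=a'$, then $\sum_k s_kvs_k^*$ converges strictly to a partial isometry witnessing $\infty a\sim\infty a'$); and $\infty(a+b)=\infty a+\infty b$ with $\infty a\perp\infty b$ whenever $a\perp b$.

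\emph{Reduction.} Unwinding the definition of $\gtrsim$ applied to the \csa{} $B$, there is a projection $q'$ in $B\otimes\BK$ with $q'\le p\otimes e_{1,1}$ and $q'\sim q\otimes e_{1,1}$. Since $q'\le p\otimes e_{1,1}$ places $q'$ in the corner $(pBp)\otimes e_{1,1}$, one may write $q'=q_1\otimes e_{1,1}$ for a projection $q_1\le p$ in $B$; and the partial isometry witnessing $q'\sim q\otimes e_{1,1}$ then lies in $(qBq_1)\otimes e_{1,1}$, so in fact $q_1\sim q$ in $B$. As $\oplus$ and $\infty(\cdot)$ respect $\sim$, proving the lemma for $(p,q_1)$ proves it for $(p,q)$. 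Hence it suffices to treat the case $q\le p$.

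\emph{Absorption.} The one substantive ingredient I would establish is that, for every projection $r$ in $B$,
\[ r\oplus\infty r\sim\infty r\qquad\text{and}\qquad\infty r\oplus\infty r\sim\infty r \]
in $\Mul(B)$. For the first, note that $r\oplus\infty r$ is the orthogonal sum of the projections $u_kru_k^*$, $k\in\N$, for the isometries $u_1=s_1$ and $u_{k+1}=s_2s_k$, which have mutually orthogonal ranges; matching this sequence of isometries with $(s_k)_k$ and using that $\sum_{k>n}s_ks_k^*a\to0$ for $a\in B$, the series $\sum_k s_kru_k^*$ converges strictly to a partial isometry implementing $r\oplus\infty r\sim\infty r$. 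The second relation follows in the same way, replacing $(u_k)_k$ by an enumeration of $\{s_is_j\mid i,j\in\N\}$. This is the step I expect to be the main obstacle: the ``uniqueness up to unitary equivalence'' statement recorded before the lemma cannot be quoted verbatim, because the reindexed families of isometries no longer have range projections summing strictly to $1$, so the connecting partial isometries must be exhibited directly via a routine strict-convergence estimate.

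\emph{Conclusion.} Assume $q\le p$ and write $p=q+(p-q)$ with $q\perp(p-q)$. Then $\infty p=\infty q+\infty(p-q)$ with orthogonal summands, so $\infty p\sim\infty q\oplus\infty(p-q)$. Expanding $q\oplus\infty p=s_1qs_1^*+s_2(\infty q)s_2^*+s_2(\infty(p-q))s_2^*$ as an orthogonal sum of projections equivalent to $q$, $\infty q$ and $\infty(p-q)$, and then absorbing,
\[ q\oplus\infty p\sim q\oplus\infty q\oplus\infty(p-q)\sim\infty q\oplus\infty(p-q)\sim\infty p. \]
Similarly, $q\oplus p=s_1qs_1^*+s_2ps_2^*$ is an orthogonal sum, so $\infty(q\oplus p)=\infty(s_1qs_1^*)+\infty(s_2ps_2^*)$ is an orthogonal sum of projections equivalent to $\infty q$ and $\infty p$, whence
\[ \infty(q\oplus p)\sim\infty q\oplus\infty p\sim\infty q\oplus\infty q\oplus\infty(p-q)\sim\infty q\oplus\infty(p-q)\sim\infty p. \]
Combining the two displays by transitivity of $\sim$ gives $\infty p\sim q\oplus\infty p\sim\infty(q\oplus p)$, as required.
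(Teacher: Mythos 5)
Your proof is correct, and it takes a more self-contained route than the paper's. The paper sets $P=\infty p$, invokes Theorem~2.1 of Hjelmborg--R{\o}rdam to conclude that the hereditary \cssa{} $\B=P(\A\otimes\BK)P$ is stable, identifies $\Mul(\B)$ with $P\Mul(\A\otimes\BK)P$, and then uses the absorption property of the unit of the multiplier algebra of a stable \csa{} ($e\oplus 1_{\Mul(\B)}\sim 1_{\Mul(\B)}$ for every projection $e\in\Mul(\B)$); both $q\oplus\infty p$ and $\infty(q\oplus p)$ are then recognized, exactly as in your bookkeeping, as $e\oplus P$ for suitable projections $e$ in that corner. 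You instead reduce to $q\le p$ (the paper leaves this implicit), split $\infty p=\infty q+\infty(p-q)$, and prove the two absorption identities $r\oplus\infty r\sim\infty r$ and $\infty r\oplus\infty r\sim\infty r$ by exhibiting the connecting partial isometries as strictly convergent series --- essentially carrying out by hand the Eilenberg-swindle argument that underlies the stable-absorption fact the paper cites. Your observation that the ``uniqueness up to unitary equivalence'' remark preceding the lemma cannot be quoted verbatim (since the reindexed isometries $u_k$ do not have range projections summing strictly to $1$) is a fair point, and your direct strict-convergence argument handles it correctly. What the paper's route buys is brevity via a citation; what yours buys is independence from \cite{hr:stable} and a fully explicit verification of the step the paper compresses into ``$\infty(q\oplus p)\sim f\oplus\infty p$ for some projection $f$ in $P\Mul(\A\otimes\BK)P$''.
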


\begin{proof}
Set $P = \infty p$.  By Theorem~2.1 of \cite{hr:stable}, $\B = P ( \A \otimes \BK ) P$ is stable since $\{ s_k p s_k^* \}_{ k \in \N }$ is a collection of mutually orthogonal, mutually equivalent projections whose sum $\sum_ {k = 1}^\infty s_k p s_k^*$ converges strictly to $P$ in $\Mul ( \B )$.  So, for any projection $e$ in $\Mul ( \B )$ if $e \sim e'$ and $1_{\Mul(\B)} \sim P'$ in $\Mul(\B)$ such that $e'$ and $P'$ are orthogonal, then $e' + Q' \sim 1_{\Mul(\B)}$.  Since $\Mul ( \B) \cong P \Mul (\A \otimes \BK ) P$, we have that $e \oplus P \sim e' + P' \sim P$ for all projections $e$ in $P \Mul ( \A \otimes \BK ) P$.  The lemma now follows since $q\oplus \infty p \sim e \oplus P$ and $\infty ( q \oplus p) \sim f \oplus \infty p$ for some projections $e, f$ in $P \Mul ( \A \otimes \BK ) P$.  
\end{proof}

\begin{lemma} \label{l:infemit_inf}
Let $E$ be a stably complete graph and let $\{p_u, s_e \mid u\in E^0, e\in E^1\}$  denote a universal Cuntz--Krieger family generating $C^*(E)$.  Let $\{q_k\}_{k=1}^\infty$ be a sequence of projections in $C^*(E)\otimes\BK$ and fix a \full{} \partitioned{} sequence $\{\{n^{(k)}_{(v,T)}\}_{(v,T)\in S_k}\}_{k=1}^\infty$ of \gc{E} for $\{q_k\}_{k=1}^\infty$.

Assume that the sum $\sum_{k=1}^\infty q_k$ converges strictly to a projection $p$ in $\Mul(C^*(E)\otimes\BK)$.
Define for all $v\in E^0$
\[ T_v = \bigcup_{k=1}^\infty\bigcup_{(v,T)\in S_k} T, \]
and assume for all $v\in E^0$ that either $T_v=\emptyset$ or there exists $w\in E^0$ with $w\geq v$ for which $T_w$ is infinite.
For all $v\in E^0$ and $k\in\N$, we define $n_{(v,\emptyset)}^{(k)}=0$ if $(v,\emptyset)\notin S_k$.
Define for all $v\in E^0$,
\[
n_v = \begin{cases}
 \sum_{k=1}^\infty n_{(v,\emptyset)}^{(k)} & \textnormal{if } T_w=\emptyset\textnormal{ for all } w\geq v 
 \\
 \infty & \textnormal{if } T_v\textnormal{ is infinite and } T_w=\emptyset\textnormal{ for all } w\in E^0\setminus\{v\} \textnormal{ with } w\geq v  \\
 1 & \textnormal{otherwise}. \end{cases}
\]

Then the sum $\sum_{v\in E^0} n_vp_v$ is a projection in $\Mul(C^*(E)\otimes\BK)$ Murray--von Neumann equivalent to $p$.
\end{lemma}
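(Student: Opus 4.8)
The plan is to express $p$ as an (infinite) orthogonal sum of projections, each of which is Murray--von Neumann equivalent to some $n_v p_v$, and then use stability of the relevant corners to absorb the equivalences into a single equivalence in $\Mul(C^*(E)\otimes\BK)$. First I would start from the \full{} \partitioned{} sequence of \gc{E}: for each $k$ we have $q_k\sim\bigoplus_{(v,T)\in S_k}n^{(k)}_{(v,T)}(p_v-\sum_{e\in T}s_es_e^*)$, and since $\sum_k q_k$ converges strictly to $p$, the corresponding orthogonal sum over all $k$ and all $(v,T)\in S_k$ of the blocks $n^{(k)}_{(v,T)}(p_v-\sum_{e\in T}s_es_e^*)$ converges strictly to a multiplier projection equivalent to $p$ (here the \partitioned{} condition~\eqref{partone} guarantees these blocks live over disjoint edge-sets, which keeps the manipulations below from colliding). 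So it suffices to show this big sum is equivalent to $\sum_{v\in E^0}n_vp_v$.

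Next I would dispose of the blocks with $T=\emptyset$ separately from those with $T\neq\emptyset$. The blocks $n^{(k)}_{(v,\emptyset)}p_v$ simply contribute, after summing over $k$, a term $(\sum_k n^{(k)}_{(v,\emptyset)})p_v$ for each $v$. For the blocks with $T\neq\emptyset$ I would apply the three preceding lemmas according to which case $v$ falls into: if $v$ is an infinite emitter supporting a loop, Lemma~\ref{l:infemit_loop} rewrites $p_v-\sum_{e\in T}s_es_e^*$ as $p_v\oplus\bigoplus_{v\dm u}n_up_u$; if $v$ is an infinite emitter not supporting a loop but dominated by a regular vertex $w$, Lemma~\ref{l:infemit_fin_dom} lets me rewrite $p_w\oplus n(p_v-\sum_{e\in T}s_es_e^*)$ (absorbing $p_v-\sum s_es_e^*$ into one of the copies of $p_w$ that is available because $E$ is stably complete and $w\geq v$ forces $w$ into the picture); and if $v$ is dominated by no regular vertex, Lemma~\ref{l:infemit_fin_nodom} supplies an automorphism $\phi$ of $C^*(E)\otimes\BK$ that simultaneously turns every block $p_v-\sum_{e\in T'}s_es_e^*$ into $p_v\oplus\bigoplus_{v\dm u}n_up_u$ while only enlarging the $T$'s in the other blocks (so applying $\phi$ to the whole sum does not change its strict-convergence or its equivalence class, and reduces us to the previous cases). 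The point of the hypothesis ``either $T_v=\emptyset$ or some $w\geq v$ has $T_w$ infinite'' is exactly that it routes each $v$ with $T_v\neq\emptyset$ into one of these three lemmas, and it forces $n_v=1$ or $n_v=\infty$ precisely in the cases where infinitely many of these rewrites dump a copy of $p_v$ into the sum (namely $n_v=\infty$ when $T_v$ itself is infinite and no strictly-larger vertex has infinite $T_w$, and $n_v=1$ otherwise, since then some larger vertex's block gets rewritten infinitely often and contributes $\infty p_u$ for $u$ it dominates, which absorbs everything below via Lemma~\ref{l:dominate}).

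After these rewrites, the sum has been reorganized into $\bigoplus_{v\in E^0}(\text{something})p_v$ where the multiplicity of $p_v$ is: $\sum_k n^{(k)}_{(v,\emptyset)}$ plus finite contributions from finitely many lower rewrites when $T_w=\emptyset$ for all $w\geq v$; is $\infty$ when $T_v$ is infinite and $T_w=\emptyset$ for $w>v$ (the $\infty$ copies of $p_v$ produced by rewriting the infinitely many blocks over $T_v$); and collapses to $1$ otherwise, because some $w>v$ has $T_w$ infinite, so by the previous bullet $\infty p_w$ occurs, $w\dm v$ gives $p_w\gtrsim p_v$ by stable-completeness clause~\eqref{cf:emit}, and Lemma~\ref{l:dominate} absorbs all finite (or infinite) multiplicities of $p_v$ into $\infty p_w$, leaving a single $p_v$. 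Matching these three cases against the definition of $n_v$ finishes the computation. I expect the main obstacle to be bookkeeping rather than any single hard idea: one must be careful that the infinitely many applications of Lemmas~\ref{l:infemit_loop}--\ref{l:infemit_fin_nodom} can be carried out compatibly with strict convergence in the multiplier algebra (the \partitioned{} condition and the fact that at each stage we only permute/absorb projections summand-by-summand make this legitimate), and that the absorption via Lemma~\ref{l:dominate} is applied in the right order — from larger vertices down to smaller ones along $\dm$ — so that no $p_v$ with intended multiplicity $1$ or $\infty$ accidentally gets swallowed or left with a wrong count.
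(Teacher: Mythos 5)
Your overall skeleton --- split the blocks according to whether $T=\emptyset$, show that each vertex $v$ with $T_v$ infinite and nothing above it contributes $\infty p_v$, and absorb everything below such a vertex using Lemma~\ref{l:dominate} together with clause~\eqref{cf:emit} of stable completeness --- is the right one, and your endgame matches the paper's. But the middle of your argument has a genuine gap: you route every block with $T\neq\emptyset$ through Lemmas~\ref{l:infemit_loop}--\ref{l:infemit_fin_nodom}, and this fails in precisely the case this lemma exists to handle, namely a vertex $v$ with $T_v$ infinite. If such a $v$ is an infinite emitter not supporting a loop and not dominated by a regular vertex, your only tool is the automorphism $\phi$ of Lemma~\ref{l:infemit_fin_nodom}, and your parenthetical claim that applying $\phi$ ``does not change the equivalence class'' of the sum is false: $\phi$ is designed precisely so that $\phi(p_v-\sum_{e\in T}s_es_e^*)\sim p_v\oplus\bigoplus n_up_u$, which is in general \emph{not} equivalent to $p_v-\sum_{e\in T}s_es_e^*$ (they differ already in $K_0$), so $\phi$ genuinely moves Murray--von Neumann classes and $\phi(p)\sim\sum_v n_vp_v$ does not yield $p\sim\sum_v n_vp_v$. (This is why the automorphism is only used in Proposition~\ref{p:fullher}, where one may freely replace $p$ by $\phi(p)$ because only the isomorphism class of the corner matters, not the equivalence class of $p$.) Similarly, in the case you would send to Lemma~\ref{l:infemit_fin_dom}, you must reuse a single spare copy of $p_w$ across infinitely many blocks; each finite stage is an equivalence, but a limit of equivalences sharing a common summand is not automatically an equivalence, and you give no argument for the passage to the limit.

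The paper avoids all of this: the three lemmas you invoke are not used in this proof at all (they belong to Proposition~\ref{p:fullher}, where they serve to \emph{reduce} to the hypotheses of the present lemma). Instead, for $v$ with $T_v$ infinite and $T_w=\emptyset$ for all $w>v$, the paper proves $\bigoplus_{k}n_k^v\bigl(p_v-\sum_{e\in T_k^v}s_es_e^*\bigr)\sim\infty p_v$ directly by a padding argument: using condition~\eqref{parttwo} of Definition~\ref{def:part} (not condition~\eqref{partone}, which is the only one you cite) one chooses a finite set $F\subseteq s_E^{-1}(v)\setminus T_v$ with $r_E(F)=r_E(T_v)$; since each $s_fs_f^*$ with $f\in F$ occurs infinitely often among the blocks, these subprojections can be redistributed so that every block is padded back up to a full copy of $p_v$. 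This works uniformly, with no case division on whether $v$ supports a loop or is dominated by a regular vertex, and it is the step your proposal is missing. The remaining vertices with $T_v\neq\emptyset$ are then absorbed into some $\infty p_w$ with $w>v$ and $T_w$ infinite, essentially as in your last paragraph.
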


\begin{proof}
Define a partition $\{A,B,C\}$ of $E^0$ as follows.  Let $A$ denote the set of $v\in E^0$ for which $T_w=\emptyset$ for all $w\geq v$, let $B$ denote the set of vertices $v$ for which $T_v$ is infinite and $T_w=\emptyset$ for all $w>v$, and let $C$ denote $E^0\setminus(A\cup B)$.
For $v\in C$, we let $\kappa_v\in\N\cup\{\infty\}$ denote the number of elements in $\{ T\subset s_E^{-1}(v) \mid (v,T)\in\bigcup_{k=1}^\infty S_k\}$, and for $v\in B\cup C$ we renumber $\{ (T,n^{(k)}_{v,T}) \mid k\in\N, T\subset s_E^{-1}(v), (v,T)\in S_k\}$ as $\{(T_k^v,n_k^v)\}$, so that we may write
\begin{align*}
p &= \sum_{k=1}^\infty q_k \sim \bigoplus_{k=1}^\infty q_k \sim
\bigoplus_{k=1}^\infty\bigoplus_{(v,T)\in S_k} n_{(v,T)}^{(k)} \left(p_v-\sum_{e\in T}s_es_e^*\right) \\
&\sim \bigoplus_{v\in A} n_vp_v
\oplus \bigoplus_{v\in B} \bigoplus_{k=1}^\infty n^v_k\left(p_v-\sum_{e\in T_k^v}s_es_e^*\right)
\oplus \bigoplus_{v\in C}  \bigoplus_{k=1}^{\kappa_v} n^v_k\left(p_v-\sum_{e\in T_k^v}s_es_e^*\right).
\end{align*}
Let $v\in B$.  Choose $F=\{f_1,\ldots,f_{|r_E(T_v)|}\}\subset s_E^{-1}(v)\setminus T_v$ such that $r_E(T_v)=r_E(F)$.
Consider $\bigoplus_{k=1}^\infty n^v_k\left(p_v-\sum_{e\in T_k^v}s_es_e^*\right)$ and let us show that it is Murray--von Neumann equivalent to $\infty p_v$.
Note for fixed $k\in\N$ that
\[
p_v-\sum_{e\in T_k^v}s_es_e^* 
\sim \left(p_v-\sum_{e\in T_k^v\cup F}s_es_e^*\right) \oplus s_{f_1}s_{f_1}^* \oplus\cdots\oplus s_{f_{|r_E(T_v)|}}s_{f_{|r_E(T_v)|}}^* 
\]
where we let the latter projection in $C^*(E)\otimes\BK$ be denoted by $r_k$.
Note that each $s_{f_i}s_{f_i}^*$ appears infinitely often in the direct sum $\bigoplus_{k=1}^\infty n_k^vr_k$.
By rearranging the blocks we can therefore get $\bigoplus_{k=1}^\infty n_k^vr_k\sim \bigoplus_{k=1}^\infty n_k^v\bar r_k$ with each projection $\bar r_k$ defined as
\[
\left(p_v-\sum_{e\in T_k^v\cup F}s_es_e^*\right) \oplus s_{f_1}s_{f_1}^* \oplus\cdots\oplus s_{f_{|r_E(T_v)|}}s_{f_{E(T_v)|}}^*\oplus s_{f_{i_1}}s_{f_{i_1}}^*\oplus\cdots\oplus s_{f_{i_{n(k)}}}s_{f_{i_{n(k)}}}^*
\]
where $i_1,\ldots,i_{n(k)}$ are chosen such that $n(k)=|T_k^v|$ and $r_E(f_{i_j})=r_E(e_j)$ when $T_k^v=\{e_1,\ldots,e_{n(k)}\}$.  Since $s_{f_{i_j}}s_{f_{i_j}}^*\sim s_{e_j}s_{e_j}^*$, we note that $\bar r_k \sim p_v$.
Hence $\bigoplus_{k=1}^\infty n^v_k\left(p_v-\sum_{e\in T_k^v}s_es_e^*\right)\sim \infty p_v$.

Let $v\in C$.  Choose a $w\in B$ for which $w>v$, and consider the projection $r_v$ defined as
\[ r_v = \infty p_w \oplus  \bigoplus_{k=1}^{\kappa_v} n^v_k\left(p_v-\sum_{e\in T_k^v}s_es_e*\right). \]
Since $p_w\gtrsim s_es_e^*$ for all $e\in T_k^v$, we note that
\[ \infty p_w \sim \infty p_w \oplus\bigoplus_{k=1}^{\kappa_v}\bigoplus_{e\in T_k^v}n_k^vs_es_e^* \]
so $r_v\sim \infty p_w\oplus\bigoplus_{e\in T_k^v}n_k^vp_v$, and we
 conclude that $r_v\sim \infty p_w\oplus p_v$ since $p_w\gtrsim p_v$, cf.~Lemma~\ref{l:dominate}.

We may therefore conclude that
\begin{align*}
 p&\sim \bigoplus_{v\in A} n_vp_v
\oplus \bigoplus_{v\in B} \infty p_v
\oplus \bigoplus_{v\in C}  \bigoplus_{k=1}^{\kappa_v} n^v_k\left(p_v-\sum_{e\in T_k^v}s_es_e*\right) \\
&\sim \bigoplus_{v\in A} n_vp_v
\oplus \bigoplus_{v\in B} \infty p_v
\oplus \bigoplus_{v\in C}  r_v \\
&\sim \bigoplus_{v\in A} n_vp_v
\oplus \bigoplus_{v\in B} \infty p_v
\oplus \bigoplus_{v\in C}  p_v
\end{align*}
as desired.
\end{proof}

\begin{remark}
Let $E$ be a graph with finitely many vertices.
Let $p$ be a projection in $\Mul(C^*(E)\otimes\BK)$ and assume that $p$ is Murray--von Neumann equivalent to a direct sum $\bigoplus_{v\in E^0}n_vp_v$ with $n_v\in\N\cup\{\infty\}$ for all $v\in E^0$. Let $\{m_v\}_{v\in E^0}$ with $m_v\in\N\cup\{\infty\}$ for all $v\in E^0$, and assume for all $v\in E^0$ that $n_v\neq m_v$ only occurs if $n_w=\infty$ for some $w\in E^0$ with $w>v$.  Then $\bigoplus_{v\in E^0}m_vp_v$ is Murray--von Neumann equivalent to $p$.
\end{remark}

\section{Corners of unital graph $C^*$-algebras}

For a graph $E$, the \emph{stabilization} $SE$ of $E$ is the graph obtained by adding an infinite head to every vertex of $E$:
\begin{align*}
(SE)^0 &= \{ v^i \mid v\in E^0, i\in\N_0\}, \\
(SE)^1 &= E^1 \cup \{ f(v)^i \mid v\in E^0, i\in\N \},
\end{align*}
with range and source defined by $r_{SE}(e)=r_E(e)^0$ and $s_{SE}(e)=s_E(e)^0$ for $e\in E^1$, and $r_{SE}(f(v)^i)=v^{i-1}$ and $s_{SE}(f(v)^i)=v^i$ for $v\in E^0$ and $i\in\N$.
We have the following proposition.

\begin{proposition}\label{p:stablegraph}
 Let $E$ be a graph, $SE$ be the stabilization of $E$, and let $\mathbb K$ be the compact operators with standard matrix units $\{e_{i,j}\}_{i,j=0}^\infty$.
 There is a $\ast$-isomorphism $\phi \colon C^\ast(SE) \to C^\ast(E) \otimes \mathbb K$ given by
 \begin{eqnarray*}
  p_{v^i} &\mapsto& p_v \otimes e_{i,i}, \quad \text{for }v\in E^0, i \in \N_0, \\
  s_e & \mapsto & s_e \otimes e_{0,0}, \quad \text{for } e \in E^1, \\
  s_{f(v)^i} & \mapsto & p_v \otimes e_{i,i-1}, \quad \text{for } v\in E^0, i\geq 1.
 \end{eqnarray*}
\end{proposition}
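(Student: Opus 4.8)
The plan is to use the universal property of $C^*(SE)$ to construct $\phi$, an equivariant version of the gauge-invariant uniqueness theorem to prove it injective, and a density computation to prove it surjective.

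First I would fix a universal Cuntz--Krieger $E$-family $\{p_v,s_e\}$ generating $C^*(E)$ and set $P_{v^i}=p_v\otimes e_{i,i}$, $S_e=s_e\otimes e_{0,0}$ for $e\in E^1$, and $S_{f(v)^i}=p_v\otimes e_{i,i-1}$ for $i\geq 1$, and check that $\{P_{v^i},S_g\}$ is a Cuntz--Krieger $SE$-family in $C^*(E)\otimes\BK$. Relations (CK0)--(CK2) follow from the corresponding relations for $\{p_v,s_e\}$ together with the rule $e_{i,j}e_{k,l}=\delta_{jk}e_{i,l}$; the one point to watch is the orthogonality $S_e^*S_{f(v)^i}=s_e^*p_v\otimes e_{0,0}e_{i,i-1}=0$, which uses $i\geq 1$. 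For (CK3) I would split the vertices of $SE$ into two types: a vertex $v^i$ with $i\geq 1$ is regular in $SE$ with $s_{SE}^{-1}(v^i)=\{f(v)^i\}$ and $S_{f(v)^i}S_{f(v)^i}^*=p_v\otimes e_{i,i}=P_{v^i}$; a vertex $v^0$ has $s_{SE}^{-1}(v^0)=s_E^{-1}(v)$, so it is regular in $SE$ precisely when $v$ is regular in $E$, and then $\sum_{e\in s_E^{-1}(v)}S_eS_e^*=\bigl(\sum_{e\in s_E^{-1}(v)}s_es_e^*\bigr)\otimes e_{0,0}=p_v\otimes e_{0,0}=P_{v^0}$ by (CK3) for $E$. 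The universal property of $C^*(SE)$ then produces a \shom{} $\phi\colon C^*(SE)\to C^*(E)\otimes\BK$ with the asserted values on generators.

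For injectivity I would equip $C^*(E)\otimes\BK$ with the circle action $\beta_z=\gamma_z\otimes\Ad(u_z)$, where $\gamma$ is the gauge action on $C^*(E)$ and $u_z\in\Mul(\BK)$ is the diagonal unitary $\operatorname{diag}(1,z,z^2,\dots)$, so that $\Ad(u_z)(e_{i,j})=z^{i-j}e_{i,j}$; strong continuity of $z\mapsto\beta_z$ reduces to the obvious norm continuity of $z\mapsto u_zku_z^*$ for $k$ a matrix unit, hence for all $k\in\BK$. A comparison on generators shows $\phi\circ\gamma_z^{SE}=\beta_z\circ\phi$: for instance $\Ad(u_z)(e_{i,i-1})=z\,e_{i,i-1}$ matches $\gamma_z^{SE}(s_{f(v)^i})=zs_{f(v)^i}$, while $\Ad(u_z)$ fixes every $e_{i,i}$ and $e_{0,0}$, matching $\gamma_z^{SE}(p_{v^i})=p_{v^i}$ and $\gamma_z^{SE}(s_e)=zs_e$. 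Since $\phi(p_{v^i})=p_v\otimes e_{i,i}\neq 0$ for every vertex $v^i$ of $SE$, the gauge-invariant uniqueness theorem for graph \csa{}s yields that $\phi$ is injective.

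Surjectivity is then a direct computation. The image $B=\phi(C^*(SE))$ is a \cssa{} of $C^*(E)\otimes\BK$ containing $p_v\otimes e_{i,i}$, $s_e\otimes e_{0,0}$, and $p_v\otimes e_{i,i-1}$ together with its adjoint $p_v\otimes e_{i-1,i}$; taking products of the latter shows $p_v\otimes e_{i,j}\in B$ for all $i,j$, whence $s_e\otimes e_{i,j}=(p_{s_E(e)}\otimes e_{i,0})(s_e\otimes e_{0,0})(p_{r_E(e)}\otimes e_{0,j})\in B$, and then, composing along paths, $s_\mu s_\nu^*\otimes e_{i,j}\in B$ for all $\mu,\nu\in E^*$ with $r_E(\mu)=r_E(\nu)$ and all $i,j$. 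Since these elements span a dense subspace of $C^*(E)\otimes\BK$ and $B$ is closed, $B=C^*(E)\otimes\BK$. I expect the main things requiring care to be the correct identification of which vertices of $SE$ are regular in the verification of (CK3) and the check that $\beta$ really is a strongly continuous circle action; the rest is bookkeeping.
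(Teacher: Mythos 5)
Your proof is correct, but it takes a genuinely different route from the paper's. You agree on the first step (checking that $\{p_v\otimes e_{i,i},\,s_e\otimes e_{0,0},\,p_v\otimes e_{i,i-1}\}$ is a Cuntz--Krieger $SE$-family, with the key bookkeeping points being $s_{SE}^{-1}(v^i)=\{f(v)^i\}$ for $i\geq 1$ versus $s_{SE}^{-1}(v^0)=s_E^{-1}(v)$, exactly as you identify), but from there you diverge: you prove injectivity by the gauge-invariant uniqueness theorem applied to the twisted action $\beta_z=\gamma_z\otimes\Ad(u_z)$ with $u_z=\operatorname{diag}(1,z,z^2,\dots)$, and surjectivity by showing the image contains the spanning set $s_\mu s_\nu^*\otimes e_{i,j}$; both steps check out (the intertwining on generators and the fullness of the matrix-unit products are exactly right). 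The paper instead constructs an explicit inverse $\psi$ by defining commuting \shom{}s $\psi_1\colon C^*(E)\to\Mul(C^*(SE))$ and $\psi_2\colon\BK\to\Mul(C^*(SE))$ via strictly convergent sums and verifying that $\phi$ and $\psi$ are mutually inverse. Your approach is cleaner in that it avoids all strict-topology bookkeeping and the verification that the images of $\psi_1$ and $\psi_2$ commute, at the cost of invoking the gauge-invariant uniqueness theorem as a black box; the paper's approach buys an explicit formula for $\psi=\phi^{-1}$, which is quietly used later (in the proof of Proposition~\ref{p:fullher}, the identification of $\overline\psi^{-1}(\bigoplus_{v}n_vp_v)$ with $p_T$ up to Murray--von Neumann equivalence is justified ``by the definition of $\psi$''). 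Nothing is lost with your route --- the same identification follows directly from $\phi(p_{v^i})=p_v\otimes e_{i,i}$ --- but if you adopted your proof you would want to rephrase that later step in terms of $\phi$ rather than $\psi$.
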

\begin{proof}
 It is straight forward to verify that the above assignment satisfies the Cuntz--Krieger relations and thus induces a \siso{}. We will construct the inverse.
 
 Let $\psi_1 \colon C^\ast(E) \to \Mul(C^\ast(SE))$ be given by
 \begin{eqnarray*}
  p_v & \mapsto & \sum_{i=0}^\infty p_{v^i} , \quad \text{for } v\in E^0 \\
  s_e &\mapsto & s_e + \sum_{i=1}^\infty s_{f(s(e))^i f(s(e))^{i-1} \dots f(s(e))^2 e} s^\ast_{f(r(e))^i \dots f(r(e))^2}, \quad \text{for } e\in E^1
 \end{eqnarray*}
 where the sums converge in the strict topology. It is straight forward to verify that this is well-defined, i.e.~that the above projections and partial isometries satisfy the Cuntz--Krieger relations.
 
 Let $\psi_2 \colon \mathbb K \to \Mul(C^\ast(SE))$ be given by
 \[
  e_{i,i-1} \mapsto \sum_{v\in E^0} s_{f(v)^i}, \quad \text{for } i \geq 1,
 \]
 with the sum converging strictly. It is straight forward to check that $\psi_2$ is well-defined, that the image of $\psi_2$ commutes with the image of $\psi_1$, and that
 \[ \psi_1(C^\ast(E)) \psi_2(\mathbb K) \subset C^\ast(SE).\]
 Thus there is an induced $\ast$-homomorphism $\psi \colon C^\ast(E) \otimes \mathbb K \to C^\ast(SE)$ given by corestricting $\psi_1 \times \psi_2 \colon C^\ast(E) \otimes \mathbb K \to \Mul(C^\ast(SE))$ to $C^\ast(SE)$.
 Straight forward computations show that $\phi$ and $\psi$ are each others inverses.
\end{proof}

The following theorem is proved in the same way as Theorem~3.15 of \cite{ar-corners-ck-algs}.

\begin{theorem}\label{t:fullcorners-stabilized}
Let $E$ be a graph and let $T$ be a hereditary subset of $(SE)^{0}$ such that $E^{0} \subset T$.  Define $p_T$ as the strict limit in the multiplier algebra $\Mul(C^*(SE))$
\begin{align*}
p_{T} = \sum_{ v \in T } p_{v}
\end{align*}
where
$\{ s_{e} , p_{v} \ : \ e \in (SE)^{1} , v\in ( SE )^{0} \}$ is a universal Cuntz--Krieger $SE$-family generating $C^{*} (SE)$.
Then $p_{T} C^{*} (SE) p_{T}$ is a full hereditary \cssa{} of $C^{*} (SE)$, and $C^{*} (G) \cong p_{T}C^{*} ( SE )p_{T}$
holds for the graph $G=(T,s_{SE}^{-1}(T), r_{SE}, s_{SE})$.
Consequently, if $T$ is finite, then $p_{T}$ is a full projection in $C^{*} (SE)$.
\end{theorem}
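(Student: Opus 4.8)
The plan is to realise $p_T C^*(SE) p_T$ as the image of the canonical embedding $C^*(G)\hookrightarrow C^*(SE)$ coming from a CK-subgraph. First I would check that $p_T$ is a well-defined projection in $\Mul(C^*(SE))$: the set $T$ is countable, the projections $p_v$ ($v\in T$) are mutually orthogonal, and for a spanning monomial $s_\mu s_\nu^*$ of $C^*(SE)$ the finite subsums $\sum_{v\in T_0}p_v$ (over finite $T_0\subseteq T$) act on it as the identity on the left as soon as $T_0$ contains $s_{SE}(\mu)$; hence the increasing net of finite subsums converges strictly. For any such projection in the multiplier algebra, $p_T C^*(SE) p_T=\{a\in C^*(SE):p_T a p_T=a\}$ is a hereditary \cssa{} of $C^*(SE)$, which disposes of the ``hereditary'' claim.

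Since $T$ is hereditary, $s_{SE}(e)\in T$ forces $r_{SE}(e)\in T$, so $G=(T,s_{SE}^{-1}(T),r_{SE},s_{SE})$ is a genuine subgraph of $SE$; moreover $G\CKsub SE$ because $G^1$ consists of \emph{all} edges emitted from $T$, so $s_G^{-1}(v)=s_{SE}^{-1}(v)$ for every $v\in T$. By the discussion following the definition of CK-subgraphs, the assignment $q_v\mapsto p_v$, $t_e\mapsto s_e$ (for a universal Cuntz--Krieger $G$-family $\{q_v,t_e\}$) induces an injective \shom{} $\Phi\colon C^*(G)\to C^*(SE)$. I would then show $\Phi(C^*(G))=p_T C^*(SE) p_T$ by two inclusions. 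For $\subseteq$: each generator is fixed by compression with $p_T$, since $p_v\le p_T$ when $v\in T$, and for $e\in G^1$ one has $s_e^*s_e=p_{r_{SE}(e)}\le p_T$ (heredity) and $s_es_e^*\le p_{s_{SE}(e)}\le p_T$, whence $p_T s_e p_T=s_e$. For $\supseteq$: since $C^*(SE)$ is the closed linear span of the monomials $s_\mu s_\nu^*$ with $r_{SE}(\mu)=r_{SE}(\nu)$ and $a\mapsto p_T a p_T$ is norm-continuous, the (closed) subalgebra $p_T C^*(SE) p_T$ is the closed linear span of the compressions $p_T s_\mu s_\nu^* p_T$; a short computation with the Cuntz--Krieger relations shows that $p_T s_\mu s_\nu^* p_T$ vanishes unless both $s_{SE}(\mu)$ and $s_{SE}(\nu)$ lie in $T$, in which case heredity of $T$ places every vertex traversed by $\mu$ and by $\nu$ in $T$, so $\mu$ and $\nu$ are paths in $G$ and $s_\mu s_\nu^*=\Phi(t_\mu t_\nu^*)$. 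Together the two inclusions give $C^*(G)\cong\Phi(C^*(G))=p_T C^*(SE) p_T$.

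For fullness, the closed ideal of $C^*(SE)$ generated by the corner $p_T C^*(SE) p_T$ equals the closed ideal generated by $\{p_v:v\in T\}$. Since $E^0\subseteq T$ and, in $SE$, the head edge $f(v)^i$ runs from $v^i$ to $v^{i-1}$, each $v^i$ with $i\ge1$ is a regular vertex with $r_{SE}(s_{SE}^{-1}(v^i))=\{v^{i-1}\}$; an induction on $i$ then shows that the saturated hereditary subset of $(SE)^0$ generated by $T$ contains every $v^i$, hence is all of $(SE)^0$, and so that ideal is $C^*(SE)$. Thus $p_T C^*(SE) p_T$ is full. Finally, if $T$ is finite then $p_T=\sum_{v\in T}p_v$ is a finite sum and hence a projection lying in $C^*(SE)$ itself, and it is full because a corner by a projection generates the same ideal as the projection.

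The one slightly fiddly point is the $\supseteq$ step --- checking that compressing a spanning monomial by the \emph{multiplier} projection $p_T$ either annihilates it or returns a monomial supported entirely on $G$, and keeping in mind throughout that $p_T\notin C^*(SE)$ in general. There is no deep obstacle here: once heredity of $T$ is exploited, everything follows the pattern of Theorem~3.15 of \cite{ar-corners-ck-algs}, and the only conceptual content is the short saturation argument used for fullness.
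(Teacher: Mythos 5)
Your proposal is correct and follows essentially the same route as the paper, which proves this result exactly as Theorem~3.15 of \cite{ar-corners-ck-algs}: identify $G$ as a CK-subgraph of $SE$, use the induced injective \shom{} $C^*(G)\to C^*(SE)$, match its image with the corner by compressing spanning monomials, and get fullness from the saturation of the infinite heads. No gaps; the subtle points you flag (strict convergence of $p_T$ and the compression computation) are handled correctly.
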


\begin{proposition} \label{p:fullher}
Let $E$ be a stably complete graph. Let $\{q_k\}_{k=1}^\infty$ be a sequence of orthogonal projections in $C^*(E)\otimes\BK$ with $\sum_{k=1}^\infty q_k$ converging strictly to a projection $p$ in $\Mul(C^*(E)\otimes\BK)$.
If $\A=p(C^*(E)\otimes\BK)p$ is a full \cssa{} of $C^*(E)\otimes\BK$, then $\A$ is isomorphic to a graph \csa{}.  Furthermore, there exists a graph $G$ with $E\CKsub G\CKsub SE$ and $\A\cong C^*(G)$.
\end{proposition}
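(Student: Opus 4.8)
The plan is to exhibit a Murray--von Neumann equivalence $p\sim\bigoplus_{v\in E^0}N_vp_v$ in $\Mul(C^*(E)\otimes\BK)$ with each $N_v\in\N\cup\{\infty\}$ satisfying $N_v\geq1$, and then to read $G$ off from the $N_v$. Given such an equivalence, put $T=\{v^i\mid v\in E^0,\ 0\leq i<N_v\}$. Because $N_v\geq1$, the set $T$ is a hereditary subset of $(SE)^0$ containing $E^0$, so the graph $G=(T,s_{SE}^{-1}(T),r_{SE},s_{SE})$ satisfies $E\CKsub G\CKsub SE$. By Theorem~\ref{t:fullcorners-stabilized}, $C^*(G)\cong p_TC^*(SE)p_T$, and under the isomorphism $C^*(SE)\cong C^*(E)\otimes\BK$ of Proposition~\ref{p:stablegraph} the projection $p_T$ corresponds to $\sum_v\sum_{0\leq i<N_v}p_v\otimes e_{i,i}$, which is Murray--von Neumann equivalent to $\bigoplus_vN_vp_v\sim p$. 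Since Murray--von Neumann equivalent projections in a multiplier algebra have isomorphic corners, $C^*(G)\cong p(C^*(E)\otimes\BK)p=\A$.

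To construct the equivalence, write $p\sim\bigoplus_kq_k$ and fix, by Theorem~\ref{t:non-stable-kthy}, \gc{E} $\{n^{(k)}_{(v,T)}\}_{(v,T)\in S_k}$ for each $q_k$. The first task is to make this system \emph{full}. Since $p_v-\sum_{e\in T}s_es_e^*\leq p_v$, the projection $p$ is Murray--von Neumann subequivalent to $\infty\bigl(\sum_{v\in V_0}p_v\bigr)$ for $V_0=\{v\mid\exists k,T\colon(v,T)\in S_k\}$; as $\A$ is full, this forces the ideal generated by $\{p_v\mid v\in V_0\}$ to be all of $C^*(E)\otimes\BK$, and since $E$ is stably complete every subset of $E^0$ is saturated, so the hereditary subset generated by $V_0$ is $E^0$. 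Now run the recursion from the proof of Lemma~\ref{l:fullproj}: for a regular vertex $w$, which supports a loop as $E$ is stably complete, replace a summand $p_w$ using $p_w\sim p_w\oplus p_v\oplus\cdots$; for an infinite emitter $w$ with $(w,T)\in S_k$, replace a summand $p_w-\sum_{e\in T}s_es_e^*$ by $(p_w-\sum_{e\in T\cup\{f\}}s_es_e^*)\oplus p_v$. After finitely many such steps every vertex of $E$ appears in some $S_k$, and Lemma~\ref{l:partitioned} turns the full system into a full partitioned one.

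The second task is to eliminate finite breaking data. For $v\in E^0$ put $T_v=\bigcup_k\bigcup_{(v,T)\in S_k}T$; if $T_v\neq\emptyset$ then $v$ is an infinite emitter. Lemma~\ref{l:infemit_inf} already tolerates any $v$ with $T_v$ infinite (take $w=v$), so only vertices $v$ with $T_v$ finite and nonempty need treatment, and by partitionedness there are then only finitely many $(v,T)$ with $T\neq\emptyset$ for such $v$. If $v$ supports a loop, property~\eqref{cf:inf to finite} of stable completeness makes Lemma~\ref{l:infemit_loop} applicable and replaces each summand $p_v-\sum_{e\in T}s_es_e^*$ by $p_v\oplus\bigoplus_{v\dm u}n_up_u$. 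If $v$ supports no loop but some regular vertex $w$ dominates $v$, then, after regrouping the direct sum so that a copy of $p_w$ (present by fullness, since $w$ is regular) lies in the same block as the summands at $v$, Lemma~\ref{l:infemit_fin_dom} replaces $p_w$ together with those summands by $p_w\oplus(\text{copies of }p_v)\oplus\bigoplus_{w\dm u}n_up_u$. Neither move creates new breaking data. Finally, if $v$ supports no loop and no regular vertex dominates $v$, Lemma~\ref{l:infemit_fin_nodom} provides an automorphism $\phi$ of $C^*(E)\otimes\BK$, which we apply globally (replacing $\A$ by the isomorphic corner $\phi(p)(C^*(E)\otimes\BK)\phi(p)$); it clears $T_v$ but can enlarge $T_u$, still finitely, for vertices $u$ emitting to $v$. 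The key point is that such a $u$ is again an infinite emitter with no loop and no regular dominator --- this uses properties~\eqref{cf:inf},~\eqref{cf:emit}, and~\eqref{cf:inf to finite} --- so these vertices form a subset $W\subseteq E^0$, and $W$ contains no cycle, since a cycle inside $W$ would by~\eqref{cf:inf} and~\eqref{cf:emit} force a loop at each of its vertices. Ordering $W$ so that every vertex precedes all vertices emitting to it and applying the corresponding automorphisms in that order terminates, leaving $T_v=\emptyset$ for every $v\in W$.

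At this stage every nonempty $T_v$ is infinite; a final application of Lemma~\ref{l:partitioned} restores a full partitioned system without affecting this, so the hypotheses of Lemma~\ref{l:infemit_inf} hold and it yields $p\sim\bigoplus_{v\in E^0}n_vp_v$. Fullness forces $n_v\geq1$ for all $v$: if $T_w=\emptyset$ for all $w\geq v$ then $(v,\emptyset)\in S_k$ for some $k$, whence $n_v\geq1$; otherwise $n_v\in\{1,\infty\}$. Taking $N_v=n_v$ completes the argument. I expect the main obstacle to be the bookkeeping in the third paragraph --- choosing an order for the automorphisms of Lemma~\ref{l:infemit_fin_nodom} in which the procedure terminates --- which is precisely where the finer structure of stably complete graphs (properties~\eqref{cf:inf},~\eqref{cf:emit},~\eqref{cf:inf to finite}) is essential; the first two tasks are direct adaptations of Lemmas~\ref{l:fullproj} and~\ref{l:partitioned}, and the concluding step is a routine translation via Theorem~\ref{t:fullcorners-stabilized} and Proposition~\ref{p:stablegraph}.
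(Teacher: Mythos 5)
Your proposal is correct and follows essentially the same route as the paper's proof: the same chain of reductions (achieving fullness via the recursion of Lemma~\ref{l:fullproj}, partitioning via Lemma~\ref{l:partitioned}, then eliminating finite breaking data with Lemmas~\ref{l:infemit_loop}, \ref{l:infemit_fin_dom}, \ref{l:infemit_fin_nodom} and concluding with Lemma~\ref{l:infemit_inf}), followed by the same translation through Proposition~\ref{p:stablegraph} and Theorem~\ref{t:fullcorners-stabilized}. The only differences are cosmetic: the paper first regroups so that $q_1$ is full and invokes Lemma~\ref{l:fullproj} as stated rather than rerunning its recursion across the whole sequence, and it leaves implicit the termination of the successive applications of Lemma~\ref{l:infemit_fin_nodom}, which you make explicit via the acyclicity of the ``emits to'' relation on the relevant vertices.
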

\begin{proof}
Let $\{p_v,s_e\mid v\in E^0,s\in E^1\}$ denote a universal Cuntz--Krieger $E$-family generating $C^*(E)$.
We first note that since $\A$ is a full \cssa{} of $C^*(E)\otimes\BK$, we may assume that $q_1$ is a full projection of $C^*(E)\otimes\BK$.
In fact, since $\Prim (\A) \cong \Prim \left( C^\ast(E) \right)$ is compact, and $\{ \sum_{k=1}^n q_k\}_{k=1}^\infty$ is an (increasing) approximate identity for $\A$, $\sum_{k=1}^n q_k$ will be full in $\A$ for large $n$, and thus also full in $C^\ast(E)\otimes \mathbb K$.
By replacing $\{q_k\}_{k=1}^\infty$ with $\sum_{k=1}^nq_i,q_{n+1},q_{n+2},\ldots$ the desired is achieved.

Let $\{\{n^{(k)}_{(v,T)}\}_{(v,T)\in S_k}\}_{k=1}^\infty$ be a sequence of \gc E  for $\{q_k\}_{k=1}^\infty$.
By Lemma~\ref{l:fullproj}, we may assume since $q_1$ is full that for all $v\in E^0$ there exists $T\subset s_E^{-1}(v)$ with $(v,T)\in S_1$. Hence $\{n^{(k)}_{(v,T)}\}_{(v,T)\in S_k}$ is \full{}.
By Lemma~\ref{l:partitioned}, we may assume that $\{n^{(k)}_{(v,T)}\}_{(v,T)\in S_k}$ is \partitioned{} and \full{}.

Define for all $v\in E^0$,
\[ T_v = \bigcup_{k=1}^\infty\bigcup_{(v,T)\in S_k}T .\]
We first consider infinite emitters $v$ with the following properties: $v$ does not support a loop, there exists a regular vertex $w$ with $w\dm v$, and $T_v$ is finite and nonempty.
For such $v$, we need the existence of a regular vertex $w\dm v$ for which $(w,\emptyset)\in S_k$ whenever there is a $T\neq\emptyset$ with $(v,T)\in S_k$.
To see that we may assume this, let $v$ be such an infinite emitter.
Define $N$ as the largest integer $k$ for which there exists $T\neq\emptyset$ with $(v,T)\in S_k$.
Replace $\{q_k\}_{k=1}^\infty$ with $q_1+\cdots +q_N,q_{N+1}, q_{N+2},\ldots$ and their \full{} \partitioned{} \gc E with
$\{\{\overline n^{(k)}_{(u,T)}\}_{(u,T)\in \overline S_k}\}_{k=N}^\infty$ defined by
\[ \bigoplus_{(u,T)\in \overline S_N}\overline n^{(N)}_{(u,T)}\left(p_u-\sum_{e\in T}s_es_e^*\right) = \bigoplus_{k=1}^N\bigoplus_{(u,T)\in S_k}n^{(k)}_{(u,T)}\left(p_u-\sum_{e\in T}s_es_e^*\right) \]
and $\overline S_k=S_k$ and $\overline n^{(k)}_{(u,T)} = n^{(k)}_{(u,T)}$  for $k>N$  and $(u,T)\in\overline S_k$.
Then $(v,T)\in \overline S_k$ only occurs when $T=\emptyset$ or $k=N$.
By assumption, there exists a regular vertex $w$ with $w\dm v$.  Since $(w,\emptyset)\in S_1$, we see that $(w,\emptyset)\in \overline S_N$, as desired.
By doing this recursively for the finitely many infinite emitters $v$ of this type, we achieve the desired.

We are now in the situation where all infinite emitters $v$ fall into at least one of the following categories:
\begin{enumerate}
\item $v$ supports a loop; \label{it:infemit_loop}
\item $v$ does not support a loop, $T_v$ is finite, and there exists a regular vertex $w\dm v$ for which $(w,\emptyset)\in S_k$ whenever there is a $T\neq\emptyset$ with $(v,T)\in S_k$; \label{it:infemit_fin_dom}
\item $v$ does not support at loop,  there is no regular vertex $w\dm v$, and $T_v$ is finite; \label{it:infemit_fin_nodom}
\item $T_v$ is infinite. \label{it:infemit_inf}
\end{enumerate}

For each infinite emitter $v$ in category~\eqref{it:infemit_loop}, we can apply Lemma~\ref{l:infemit_loop} to all $T\neq\emptyset$ with $(v,T)\in S_k$ for some $k$.  By replacing the \full{} \partitioned{} \gc E for $\{q_k\}_{k=1}^\infty$ accordingly, we see that for such $v$ we may assume that $T_v$ is empty.

Similarly, we can apply Lemma~\ref{l:infemit_fin_dom} to all infinite emitters $v$ that fall in category~\eqref{it:infemit_fin_dom}.  So for such $v$ we may also assume that $T_v$ is empty.

Let $v$ be an infinite emitter in category~\eqref{it:infemit_fin_nodom}.
Let $\phi\colon C^*(E)\otimes\BK\to C^*(E)\otimes\BK$ denote the \siso{} given by Lemma~\ref{l:infemit_fin_nodom}.  By replacing $\{q_k\}_{k=1}^\infty$ with $\{\phi(q_k)\}_{k=1}^\infty$
we may assume that $T_v$ is empty.
More concretely, let for each $k\in\N$, $\{\overline n^{(k)}_{(u,T)}\}_{(u,T)\in \overline S_k}$ be \gc E for $\phi(q_k)$ achieved by applying \eqref{it:autone}--\eqref{it:autthree} of Lemma~\ref{l:infemit_fin_nodom} to
\[ \phi(q_k) \sim \bigoplus_{(u,T)\in S_k} \phi\left(p_u-\sum_{e\in T}s_es_e^*\right) . \]
Clearly, $\{\{\overline n^{(k)}_{(u,T)}\}_{(u,T)\in \overline S_k}\}_{k=1}^\infty$ is \full{} and \partitioned{} since $\{\{ n^{(k)}_{(u,T)}\}_{(u,T)\in S_k}\}_{k=1}^\infty$ is.
Let $ \overline T_u = \bigcup_{k=1}^\infty\bigcup_{(u,T)\in \overline S_k}T$.
Now, by \eqref{it:autone}--\eqref{it:auttwo} of Lemma~\ref{l:infemit_fin_nodom}, all infinite emitters $w\in E^0\setminus\{v\}$ fall into category \eqref{it:infemit_fin_nodom} if $\overline T_w$ is finite.
And by \eqref{it:autthree} of Lemma~\ref{l:infemit_fin_nodom}, $\overline T_v$ is empty.

We have now achieved
that $T_v\neq\emptyset$ only occurs in category~\eqref{it:infemit_inf}, i.e., when $T_v$ is infinite.
Define for all $v\in E^0$
\[
n_v = \begin{cases}
 \sum_{k=1}^\infty n_{(v,\emptyset)}^{(k)} & \textnormal{if } T_w=\emptyset\textnormal{ for all } w\geq v \\
 \infty & \textnormal{if } T_v\textnormal{ is infinite and } T_w=\emptyset\textnormal{ for all } w>v \\
 1 & \textnormal{otherwise}. \end{cases}
\]
By Lemma~\ref{l:infemit_inf}, $p\sim\bigoplus_{v\in E^0}n_vp_v$ in $\Mul(C^*(E)\otimes\BK)$.

Define $T\subset (SE)^0$ by
\[ T=\{v^i \mid v\in E^0, i\in\{0,\ldots,n_v-1\}\}, \]
and $p_T=\sum_{v\in T}p_v$.
Let $\overline{\psi} \colon \Mul(C^\ast(SE)) \to \Mul(C^\ast(E)\otimes \mathbb K)$ be the \siso{} induced by Proposition \ref{p:stablegraph}.
It is easily seen (by the definition of $\psi$) that $\overline{\psi}^{-1}(\bigoplus_{v\in E^0}n_vp_v)$ is Murray--von Neumann equivalent to $p_T$.
Hence $p_TC^*(SE)p_T\cong p(C^*(E)\otimes\BK)p$.
So by Theorem~\ref{t:fullcorners-stabilized}, $\A\cong C^*(G)$ for the CK-subgraph~$G$ of~$SE$ defined by
\[ G= (T,s_{SE}^{-1}(T),r_{SE},s_{SE}) .\]
\end{proof}

\section{Hereditary $C^*$-subalgebras of unital graph $C^*$-algebras}
A \csa{} $\A$ is called \spu{} if it contains a countable approximate unit of projections.
All graph \csa{}s $C^*(E)$ are \spu{} as the finite sums of vertex projections $p_v$ constitutes an approximate unit of projections.

Recall that if $E$ is a graph with finitely many vertices, then by Proposition~\ref{p:canonical} there exists a stably complete graph $G$ such that $E\sim_MG$ and thereby $C^*(E)\otimes\BK\cong C^*(G)\otimes\BK$.

\begin{theorem} \label{t:main}
Let $E$ be a graph with finitely many vertices, and let $\A$ be a \csa{} strongly Morita equivalent to $C^*(E)$.  Then the following are equivalent:
\begin{enumerate}
\item \label{main:1} For any stably complete graph $G$ with $C^*(E) \otimes \BK \cong C^*(G) \otimes\BK$, there exists a graph $F$ with $G\CKsub F\CKsub SG$ and $\A\cong C^*(F)$.

\item \label{main:2}  $\A$ is isomorphic to a graph $C^*$-algebra.

\item \label{main:3} $\A$ is \spu.
\end{enumerate}
\end{theorem}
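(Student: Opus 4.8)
The plan is to prove the cycle of implications $\eqref{main:1}\Rightarrow\eqref{main:2}\Rightarrow\eqref{main:3}\Rightarrow\eqref{main:1}$. The first implication is immediate, since a graph $C^*$-algebra $C^*(F)$ is by definition a graph $C^*$-algebra. The implication $\eqref{main:2}\Rightarrow\eqref{main:3}$ is also essentially for free: as recalled at the start of this section, every graph $C^*$-algebra is $\sigma_p$-unital, the finite sums of vertex projections furnishing a countable approximate unit of projections, so if $\A\cong C^*(F)$ then $\A$ is $\sigma_p$-unital. So the entire content is in $\eqref{main:3}\Rightarrow\eqref{main:1}$, and this is where essentially all the work done in Sections~4--6 gets used.

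For $\eqref{main:3}\Rightarrow\eqref{main:1}$, fix a stably complete graph $G$ with $C^*(E)\otimes\BK\cong C^*(G)\otimes\BK$; such a $G$ exists by Proposition~\ref{p:canonical} together with Theorem~\ref{t:move-eq}. Since $\A$ is strongly Morita equivalent to $C^*(E)$, and both $\A$ and $C^*(E)$ are $\sigma$-unital (indeed $\sigma_p$-unital — $C^*(E)$ because it is a graph $C^*$-algebra, $\A$ by hypothesis), the Brown--Green--Rieffel theorem gives $\A\otimes\BK\cong C^*(E)\otimes\BK\cong C^*(G)\otimes\BK$. Under this isomorphism, $\A$ becomes a full hereditary $C^*$-subalgebra of $C^*(G)\otimes\BK$, hence of the form $p(C^*(G)\otimes\BK)p$ for a full projection $p$ in the multiplier algebra $\Mul(C^*(G)\otimes\BK)$. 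The key point is that, because $\A$ is $\sigma_p$-unital, this multiplier projection $p$ can be written as a strict limit $p=\sum_{k=1}^\infty q_k$ of a sequence of mutually orthogonal projections $q_k$ lying in $C^*(G)\otimes\BK$ itself: take a countable approximate unit of projections $e_1\leq e_2\leq\cdots$ for $\A$, view it inside $p(C^*(G)\otimes\BK)p$, and set $q_1=e_1$, $q_{k+1}=e_{k+1}-e_k$; then $\sum q_k$ converges strictly to the unit of $\Mul(\A)$, which is $p$. This places us exactly in the hypotheses of Proposition~\ref{p:fullher}, applied to the stably complete graph $G$. That proposition yields a graph $F$ with $G\CKsub F\CKsub SG$ and $\A\cong C^*(F)$, which is precisely~\eqref{main:1}.

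The main obstacle is not in the bookkeeping above — it is entirely absorbed into Proposition~\ref{p:fullher} and, through it, into the sequence of technical lemmas of Sections~4 and~5 (Lemmas~\ref{l:fullproj}, \ref{l:partitioned}, \ref{l:infemit_loop}, \ref{l:infemit_fin_dom}, \ref{l:infemit_fin_nodom}, \ref{l:dominate}, \ref{l:infemit_inf}), whose role is to manipulate the $E$-coefficients of the projections $q_k$ so that the strict sum $p$ can be recognized, up to Murray--von Neumann equivalence in $\Mul(C^*(G)\otimes\BK)$, as a ``diagonal'' projection $\bigoplus_{v\in G^0}n_vp_v$ with $n_v\in\N\cup\{\infty\}$, and then to identify the corner cut by such a projection with the graph $C^*$-algebra of a CK-subgraph of $SG$ via Theorem~\ref{t:fullcorners-stabilized}. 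One point that does need a small argument within the proof of Theorem~\ref{t:main} proper is the reduction, used implicitly above, that $\A$ being strongly Morita equivalent to the separable, unital (hence $\sigma$-unital) algebra $C^*(E)$ forces $\A$ to be $\sigma$-unital, so that the Brown--Green--Rieffel stable isomorphism theorem applies; this is standard. Everything else is a matter of citing Proposition~\ref{p:canonical}, Theorem~\ref{t:move-eq}, and Proposition~\ref{p:fullher} in the right order.
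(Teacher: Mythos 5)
Your proposal is correct and follows essentially the same route as the paper: the nontrivial implication \eqref{main:3}$\implies$\eqref{main:1} is handled by transporting an increasing approximate unit of projections for $\A$ through the stable isomorphism $\A\otimes\BK\cong C^*(G)\otimes\BK$, forming the orthogonal differences $q_k$, and invoking Proposition~\ref{p:fullher}. The only detail worth making explicit (as the paper does) is that separability of $\A$, obtained from the stable isomorphism with the separable algebra $C^*(E)$, is what lets you assume the approximate unit of projections is increasing.
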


\begin{proof}
\eqref{main:1} $\implies$ \eqref{main:2} and \eqref{main:2} $\implies$ \eqref{main:3} are clear.  Assume that $\A$ is \spu{} and let $\{p_n\}_{n=1}^\infty$ denote an approximate unit of projections for $\A$.
Since $\A$ is \su{} and strongly Morita equivalent to $C^*(E)$ which is separable, $\A$ is stably isomorphic to $C^*(E)$ and therefore separable.
Since $\A$ is separable, we may assume that $\{p_n\}_{n=1}^\infty$ is increasing.

Let $G$ be a stably complete graph with $C^*(E) \otimes \BK \cong C^*(G) \otimes\BK$.  Let a \siso{} $\phi\colon\A\otimes\BK\to C^*(G)\otimes\BK$ be given and let $\{e_{i,j}\}_{i,j}$ denote a system of matrix units for $\BK$.
Define mutually orthogonal projections $q_n$ in $C^*(G)\otimes\BK$ by $q_n=\phi((p_n-p_{n-1})\otimes e_{1,1})$.
Note that $\{p_n\otimes e_{1,1}\}$ converges in the strict topology on $\Mul(\A\otimes\BK)$ to a projection $p$ with $p(\A\otimes\BK)p\cong\A$ being full in $\A\otimes\BK$.
Hence $\sum_{n=1}^\infty q_n$ converges in the strict topology on $\Mul(C^*(G)\otimes\BK)$ to a projection $q$ with $q(C^*(E)\otimes\BK)q\cong p(\A\otimes\BK)p$ and $q(C^*(G)\otimes\BK)q$ full in $C^*(E)\otimes\BK$.
By Proposition~\ref{p:fullher}, there exists a graph $F$ with $G\CKsub F\CKsub SG$ and $C^*(F)\cong q(C^*(G)\otimes\BK)q$.
Hence $\A\cong C^*(F)$.
\end{proof}

\begin{remark}
Note that if $E$ is not stably complete, there may not exist a graph $F$ with $E\CKsub F\CKsub SE$ and $\A\cong C^*(F)$: If $E^0=\{v,w\}$ and $E^1=\{f,g\}$ with $s_E(f)=r_E(g)=v$ and $r_E(f)=s_E(g)=w$ then $\A=C(S^1)$ is stably isomorphic to $C^*(E) \cong \mathsf{M}_2 ( C(S^1) )$ but $C^*(F)$ is nonabelian for all $F$ with $E\CKsub F\CKsub SE$.
\end{remark}

\begin{example}\label{e:spu}
Set $\A{} = \left\{ f \in C( S^{1} , \mathsf{M}_{2} ) :  f(1) \in \begin{bmatrix} \C & 0 \\ 0 & 0 \end{bmatrix} \right\}$.  Then $\A{}$ is a hereditary full \cssa{} of $C( S^{1} , \mathsf{M}_{2} )$ which
is not \spu{}.  Consequently, $\A$ is stably isomorphic to a unital graph \csa{} but is not isomorphic to a graph \csa{}.
\end{example}

\begin{example}\label{e:unital}
The UHF algebra $\Mat_{2^\infty}$ is not isomorphic to a graph \csa{} as the $K$-theory of a unital graph \csa{} is always finitely generated.
Since $\Mat_{2^\infty}\otimes\BK$ is isomorphic to a graph \csa{}, we conclude that Theorem~\ref{t:main} fails if one omits the assumption that the graph $E$ has finitely many vertices.
\end{example}

\begin{corollary}\label{c:hereditary}
Let $E$ be a graph with finitely many vertices, and let $\A$ be a hereditary \cssa{} of $C^*(E)\otimes\BK$.  Then the following are equivalent:
\begin{enumerate}
\item \label{her:1}  For any stably complete graph $G$ such that $C^*(E) \otimes \BK \cong C^*( G) \otimes \BK$,  there exists a hereditary subset $H$ of $G^0$ and a graph $F$ with $G_H\CKsub F\CKsub S(G_H)$ and $\A\cong C^*(F)$.

\item \label{her:2} $\A$ is isomorphic to a graph \csa{}.

\item \label{her:3} $\A$ is \spu{}.
\end{enumerate}
\end{corollary}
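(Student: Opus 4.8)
The implications \eqref{her:1}$\Rightarrow$\eqref{her:2}$\Rightarrow$\eqref{her:3} are immediate, since a graph \csa{} is a graph \csa{} and every graph \csa{} is \spu{}. The plan for \eqref{her:3}$\Rightarrow$\eqref{her:1} is to identify the ideal generated by $\A$ inside a stabilised graph \csa{} with a gauge-invariant ideal of the form $I_{(H,\emptyset)}$, and then to invoke Theorem~\ref{t:main}.

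So suppose $\A$ is \spu{} and fix a stably complete graph $G$ with $C^*(E)\otimes\BK\cong C^*(G)\otimes\BK$. By Proposition~\ref{p:stablegraph}, $C^*(G)\otimes\BK\cong C^*(SG)$, so I may regard $\A$ as a hereditary \cssa{} of the graph \csa{} $C^*(SG)$. Let $\I$ be the ideal of $C^*(SG)$ generated by $\A$. Then $\A$ is a full hereditary \cssa{} of $\I$, hence strongly Morita equivalent to $\I$. Any approximate unit of projections for $\A$ generates $\I$ as an ideal of $C^*(SG)$, so $\I$ is generated by projections, and Corollary~\ref{c:non-stable-kthy} gives an admissible pair $(H',S')$ in $SG$ with $\I=I_{(H',S')}$.

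The next step is to analyse admissible pairs in $SG$. Since every vertex of $G$ carries an infinite head, saturation forces any saturated hereditary subset of $(SG)^0$ to have the form $H'=\{v^i\mid v\in H,\ i\in\N_0\}$ for a saturated hereditary subset $H$ of $G^0$; moreover all infinite emitters of $SG$ lie in the bottom layer, and $v^0$ is a breaking vertex for such an $H'$ exactly when $v$ is a breaking vertex for $H$ in $G$. Here I would use the key fact that a stably complete graph has no breaking vertices for any saturated hereditary set: a breaking vertex $v$ for $H$ would be an infinite emitter outside $H$ emitting at least one edge into $G^0\setminus H$, so $v\dm w$ for some $w\in G^0\setminus H$, and then condition~\eqref{cf:inf} of Definition~\ref{d:canonical} would force $v$ to emit infinitely many edges into $G^0\setminus H$, contradicting that $v$ is breaking. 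Hence $S'=\emptyset$, and a direct comparison of definitions shows $(SG)_{(H',\emptyset)}=S(G_H)$, where $G_H:=(H,s_G^{-1}(H),r_G,s_G)$. By Proposition~\ref{prop: full corner gauge invariant} together with Proposition~\ref{p:stablegraph}, $\I$ is then stably isomorphic to the unital graph \csa{} $C^*(G_H)$, and hence so is $\A$.

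Finally, one checks that $G_H$ is again stably complete: because $H$ is hereditary, $s_{G_H}^{-1}(v)=s_G^{-1}(v)$ for every $v\in H$, and the paths, cycles, simple cycles and dominations of $G_H$ are precisely those of $G$ that stay inside $H$, so each of conditions \eqref{cf:fin}--\eqref{cf:inf to finite} for $G$ restricts to the corresponding condition for $G_H$. Applying Theorem~\ref{t:main} to $\A$ (which is strongly Morita equivalent to $C^*(G_H)$ and \spu{}) with the stably complete graph $G_H$ itself, the equivalence \eqref{main:1}$\Leftrightarrow$\eqref{main:3} produces a graph $F$ with $G_H\CKsub F\CKsub S(G_H)$ and $\A\cong C^*(F)$, which is exactly \eqref{her:1} for the hereditary subset $H$. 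The main obstacle I anticipate is the structural bookkeeping in the third paragraph---pinning down the correspondence between admissible pairs of $SG$ and of $G$ and verifying that stable completeness of $G$ precludes breaking vertices; once that is in place, everything else is a routine assembly of results already established.
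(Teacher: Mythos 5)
Your proof is correct and follows essentially the same route as the paper's: identify the ideal generated by $\A$ as a gauge-invariant ideal $I_{(H,\emptyset)}$ (with no breaking vertices because $G$ is stably complete), observe that $G_H$ is again stably complete, reduce to the full case, and apply Theorem~\ref{t:main}. The only cosmetic differences are that you work inside $C^*(SG)$ and apply Corollary~\ref{c:non-stable-kthy} to the whole approximate unit at once, whereas the paper stays in $C^*(G)\otimes\BK$ and instead uses that the finitely many gauge-invariant ideals force $\I=\I_n$ for a single $n$.
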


\begin{proof}
\eqref{her:1} $\implies$ \eqref{her:2} and \eqref{her:2} $\implies$ \eqref{her:3} are clear.  Assume that $\A$ is \spu{}.  Let $G$ be a stably complete graph with $C^*(E) \otimes \BK \cong C^*( G) \otimes \BK$.  Suppose $\A$ is a full hereditary \cssa{} of $C^*(E)\otimes\BK$.  Then by Corollary~2.9 of~\cite{b:hereditary}, $\A$ and $C^*(E)$ are stably isomorphic.  Then by Theorem~\ref{t:main}, there exists a graph $F$ with $G \CKsub F \CKsub SG$ with $\A \cong C^*(F)$.  Set $H = G^0$.

Assume that $\A$ is not full in $C^*(E)\otimes\BK$.  We may view $\A$ as a hereditary \cssa{} of $C^*(G)\otimes\BK$ since $C^*(E) \otimes \BK \cong C^*( G) \otimes \BK$.
Let $\I$ denote the ideal in $C^*(G)\otimes\BK$ generated by $\A$.
Let $\{p_n\}_{n=1}^\infty$ denote an increasing approximate unit of projections for $\A$, and let $\I_n$ denote the ideal in $C^*(G)\otimes\BK$ generated by~$p_n$.
By Corollary~\ref{c:non-stable-kthy}, $\I_n$ is gauge-invariant.
Since $G$ has finitely many vertices, $C^*(G)\otimes\BK$ has finitely many gauge-invariant ideals.
So we conclude that $\I=\I_n$ for some $n$, hence $\I$ is gauge-invariant and there exists an admissible pair $(H,S)$ in $G^0$ with $\I=I_{(H,S)}\otimes\BK$. 
Since $G$ is a stably complete graph, $G$ has no breaking vertices, hence $S=\emptyset$.
Furthermore, one easily checks that $G_H$ is a stably complete graph.
By Proposition~\ref{prop: full corner gauge invariant}, $\I=I_{H} \otimes \BK$ is isomorphic to a full hereditary \cssa{} of $C^* ( G_H) \otimes \BK$.  
We conclude that $\A$ is isomorphic to a full hereditary \cssa{} of $C^*(G_H)\otimes\BK$.  By Theorem~\ref{t:main}, there exists a graph $F$ with $G_H \CKsub F \CKsub S(G_H)$ such that $\A \cong C^*( F)$.
\end{proof}

\begin{corollary}
Let $E$ be a graph with finitely many vertices and assume that $E$ satisfies Condition~(K).
If $\A$ is a hereditary \cssa{} of $C^*(E)\otimes\BK$, then $\A$ is isomorphic to a graph \csa{}.
\end{corollary}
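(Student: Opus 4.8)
The plan is to reduce everything to Corollary~\ref{c:hereditary}: since $\A$ is by hypothesis a hereditary \cssa{} of $C^*(E)\otimes\BK$ and $E$ has finitely many vertices, both standing hypotheses of that corollary are in place, so it suffices to verify that $\A$ is \spu{}; then the implication \eqref{her:3} $\implies$ \eqref{her:2} of Corollary~\ref{c:hereditary} yields that $\A$ is isomorphic to a graph \csa{}. The key input is the characterisation of Condition~(K) recorded before Definition~\ref{d:canonical}: by Theorem~2.5 of~\cite{hs:pi}, the assumption that $E$ satisfies Condition~(K) means precisely that $C^*(E)$ has real rank zero.

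First I would upgrade this to the statement that $C^*(E)\otimes\BK$ has real rank zero, using that real rank zero is preserved under stabilisation (a classical fact of Brown and Pedersen). Next, since real rank zero is inherited by hereditary \cssa{}s, the algebra $\A$ has real rank zero as well. Finally, $\A$ is separable, being a \cssa{} of the separable \csa{} $C^*(E)\otimes\BK$, and a separable \csa{} of real rank zero admits a countable approximate unit of projections --- this is one of the standard equivalent formulations of real rank zero (every hereditary \cssa{} has an approximate unit of projections, which one makes countable using separability). Hence $\A$ is \spu{}, and Corollary~\ref{c:hereditary} applies.

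There is no genuine obstacle here; the only point requiring care is to invoke the correct classical facts about real rank zero --- that it passes to stabilisations and to hereditary subalgebras, and that for separable \csa{}s it is equivalent to the existence of a countable approximate unit of projections --- none of which are reproved in the present paper, but all of which are due to Brown and Pedersen and can simply be cited.
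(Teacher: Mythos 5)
Your proposal is correct and follows essentially the same route as the paper: Condition~(K) gives real rank zero for $C^*(E)$ and hence for $C^*(E)\otimes\BK$, real rank zero passes to hereditary \cssa{}s and (with separability) yields a countable approximate unit of projections, and then Corollary~\ref{c:hereditary} applies. The paper compresses the middle steps into one sentence, but the underlying argument is identical.
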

\begin{proof}
Since $E$ satisfies Condition~(K), $C^*(E)$ and thereby also $C^*(E)\otimes\BK$ have real rank zero.  Consequently, all hereditary \cssa{}s of $C^*(E)\otimes\BK$ are \spu{}.  In particular, $\A$ is \spu{}, and the desired follows from Corollary~\ref{c:hereditary}.
\end{proof}

\begin{corollary}
Let $E$ be a graph with finitely many vertices and assume that $E$ satisfies Condition~(K).
If $\A$ is a \csa{} stably isomorphic to $C^*(E)$, then $\A$ is isomorphic to a graph \csa{}.
\end{corollary}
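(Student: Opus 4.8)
The plan is to deduce this from the preceding corollary (or, equivalently, from Corollary~\ref{c:hereditary}) by simply exhibiting $\A$ as a hereditary \cssa{} of $C^*(E)\otimes\BK$. Indeed, the hard analytic work has already been done: once $\A$ is identified, up to isomorphism, with a hereditary \cssa{} of $C^*(E)\otimes\BK$, the hypothesis that $E$ satisfies Condition~(K) is exactly what makes the earlier Condition~(K) corollary applicable.

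Concretely, I would proceed as follows. First, since $\A$ is stably isomorphic to $C^*(E)$, we have a \siso{} $\A\otimes\BK\cong C^*(E)\otimes\BK$; in particular $\A$ is separable, being stably isomorphic to the separable algebra $C^*(E)$. Second, I would observe that $\A$ is isomorphic to a hereditary \cssa{} of $\A\otimes\BK$: viewing $\A$ inside $\A\otimes\BK$ via $\id_\A\otimes e_{1,1}$ as in the conventions fixed in Section~4, the corner $\A\otimes e_{1,1}=(1_{\Mul(\A)}\otimes e_{1,1})(\A\otimes\BK)(1_{\Mul(\A)}\otimes e_{1,1})$ is a hereditary \cssa{} of $\A\otimes\BK$, since a corner by a projection in the multiplier algebra is always hereditary. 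Composing with the isomorphism $\A\otimes\BK\cong C^*(E)\otimes\BK$ shows that $\A$ is isomorphic to a hereditary \cssa{} of $C^*(E)\otimes\BK$. Third, I would invoke the preceding corollary (the Condition~(K) statement about hereditary \cssa{}s of $C^*(E)\otimes\BK$) to conclude that $\A$ is isomorphic to a graph \csa{}. If one prefers to cite Corollary~\ref{c:hereditary} instead, one first notes that $C^*(E)\otimes\BK$ has real rank zero because $E$ satisfies Condition~(K), hence so does the isomorphic algebra $\A\otimes\BK$, and therefore $\A$ itself has real rank zero (real rank zero passes between a \csa{} and its stabilization); being separable, $\A$ then admits a countable approximate unit of projections, i.e.\ $\A$ is \spu{}, and Corollary~\ref{c:hereditary} applies.

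I do not expect any genuine obstacle here: the statement is essentially a repackaging of the preceding corollary. The only points that merit a word of care are (i) that ``stably isomorphic'' really delivers an honest isomorphism $\A\otimes\BK\cong C^*(E)\otimes\BK$ --- immediate if this is taken as the definition, and otherwise a consequence of the Brown--Green--Rieffel theorem once one knows $\A$ is \su{}, which follows from separability --- and (ii) the elementary fact that a corner of a \csa{} by a multiplier projection is a hereditary \cssa{}, used to realize $\A$ inside $\A\otimes\BK$.
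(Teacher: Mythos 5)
Your argument is correct, and it is precisely the intended one: the paper states this corollary without proof, leaving it as the evident consequence of the preceding Condition~(K) corollary once one observes that $\A\cong(1_{\Mul(\A)}\otimes e_{1,1})(\A\otimes\BK)(1_{\Mul(\A)}\otimes e_{1,1})$ realizes $\A$ as a hereditary \cssa{} of $\A\otimes\BK\cong C^*(E)\otimes\BK$. Your remarks on separability, the Brown--Green--Rieffel theorem, and the passage of real rank zero to the stabilization are all sound and fill in the details the paper takes for granted.
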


\section{Unitizations and graph \csa{}s}
For a non-unital \csa{} $\A$, we let $\A^\dagger$ denote the minimal unitization of $\A$.  For $\A$ unital, we let $\A^\dagger=\A$.

\begin{definition}
Let $E$ be a graph, and let $H$ be a hereditary subset of $E^0$.  Consider the set
\[ F(H) = \{ \alpha\in E^* \mid \alpha = e_1\cdots e_n, s_E(e_n)\notin H, r_E(e_n)\in H \}, \]
 let $\overline F(H)$ denote a copy of $F(H)$, and write  $\overline\alpha$ for the copy of $\alpha$ in $\overline F(H)$. Define the graph $E(H)$ by
 \begin{align*}
 E(H)^0 &= H\cup F(H) \\
  E(H)^1 &= s_E^{-1}(H)\cup\overline F(H)
 \end{align*}
and by extending $s_E$ and $r_E$ to $E(H)$ by defining $s_{E(H)}(\overline\alpha)=\alpha$ and $r_{E(H)}(\overline\alpha)=r_E(\alpha)$.
\end{definition}

\begin{definition}
Let $E$ be a graph, and let $H$ be a hereditary subset of $E^0$.  Consider again the set
\[ F(H) = \{ \alpha\in E^* \mid \alpha = e_1\cdots e_n, s_E(e_n)\notin H, r_E(e_n)\in H \}. \]
Define the graph $\widetilde E(H)$ by
 \begin{align*}
 \widetilde E(H)^0 &= H\cup \{\star\} \\
 \widetilde E(H)^1 &= s_E^{-1}(H)\cup  F(H)
 \end{align*}
and by extending $s_E$ and $r_E$ to $E(H)$ by defining $s_{\widetilde E(H)}(\alpha)=\star$ and $r_{\widetilde E(H)}(\alpha)=r_E(\alpha)$.
\end{definition}
Theorem~3.9 of~\cite{ar-corners-ck-algs} and its proof hold in the following more general setting.

\begin{theorem} \label{t:EH}
Let $E$ be a graph and let $H$ be a hereditary subset of $E^0$.  Assume that the graph
\[ (E^0\setminus H, r_E^{-1}(E^0\setminus H),r_E,s_E) \]
is acyclic, that $v\dm H$ for all $v\in E^0\setminus H$, and that $E^0\setminus H\subset E_\textnormal{reg}^0$. 
Assume furthermore for all $v\in E^0\setminus H$ that there is an upper bound for the length of paths $\alpha\in F(H)$ for which $s_E(\alpha)=v$.
 Then $C^*(E)\cong C^*(E(H))$.
\end{theorem}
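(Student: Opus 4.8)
The plan is to prove the isomorphism directly, by writing down mutually inverse \shom{}s $\Phi\colon C^*(E(H))\to C^*(E)$ and $\Psi\colon C^*(E)\to C^*(E(H))$ coming from explicit Cuntz--Krieger families, and then checking that $\Phi\circ\Psi=\id$ and $\Psi\circ\Phi=\id$ on generators. Fix universal Cuntz--Krieger families $\{p_v,s_e\mid v\in E^0,e\in E^1\}$ for $C^*(E)$ and $\{q_w,t_f\mid w\in E(H)^0,f\in E(H)^1\}$ for $C^*(E(H))$, and for $v\in E^0\setminus H$ write $F(H)_v=\{\alpha\in F(H)\mid s_E(\alpha)=v\}$. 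Two consequences of the hypotheses drive the argument. First, since every $v\in E^0\setminus H$ is regular and satisfies $v\dm H$, and the lengths of paths in $F(H)$ starting at $v$ are bounded, each set $F(H)_v$ is finite and nonempty, and an induction on that length bound using (CK3) yields the identity
\[ p_v=\sum_{\alpha\in F(H)_v}s_\alpha s_\alpha^*\qquad\text{in }C^*(E) \]
for every $v\in E^0\setminus H$. Second, heredity of $H$ forces that no element of $F(H)$ is a proper prefix of another, so the projections $\{\,s_\alpha s_\alpha^*\mid\alpha\in F(H)\,\}$ are pairwise orthogonal and orthogonal to every $p_v$ with $v\in H$.

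For $\Phi$, I would check that $P_v:=p_v$ for $v\in H$ and $P_\alpha:=s_\alpha s_\alpha^*$ for $\alpha\in F(H)$, together with $t_e\mapsto s_e$ for $e\in s_E^{-1}(H)$ and $t_{\overline\alpha}\mapsto s_\alpha$ for $\overline\alpha\in\overline F(H)$, defines a Cuntz--Krieger $E(H)$-family in $C^*(E)$: orthogonality of the vertex projections is the second fact above; (CK0)--(CK2) follow from $s_\alpha^*s_\alpha=p_{r_E(\alpha)}$, $s_\alpha s_\alpha^*\le p_{s_E(\alpha)}$ and (CK0) for $E$; and (CK3) holds because $s_{E(H)}^{-1}(v)=s_E^{-1}(v)$ for $v\in H$ while each $\alpha\in F(H)$ emits in $E(H)$ only the edge $\overline\alpha$, which is sent to $s_\alpha$ with $s_\alpha s_\alpha^*=P_\alpha$. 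For $\Psi$, I would take $\tilde p_v:=q_v$ for $v\in H$ and $\tilde p_v:=\sum_{\alpha\in F(H)_v}q_\alpha$ for $v\in E^0\setminus H$, and set $\tilde s_e:=t_e$ if $s_E(e)\in H$, $\tilde s_e:=t_{\overline e}$ if $s_E(e)\notin H$ and $r_E(e)\in H$, and $\tilde s_e:=\sum_{\beta\in F(H)_{r_E(e)}}t_{\overline{e\beta}}\,t_{\overline\beta}^*$ if $s_E(e)\notin H$ and $r_E(e)\notin H$ (each sum being finite). A short computation then shows that each $\tilde s_e$ is a partial isometry with $\tilde s_e^*\tilde s_e=\tilde p_{r_E(e)}$ and $\tilde s_e\tilde s_e^*=\sum_{\alpha\in F(H)_{s_E(e)},\,\alpha_1=e}q_\alpha\le\tilde p_{s_E(e)}$; that (CK0) holds because distinct edges $e\ne f$ of $E$ produce disjoint families of edges of $E(H)$; and that (CK3) at a vertex follows from the partition of $F(H)_v$ by first edge together with $s_{E(H)}^{-1}(v)=s_E^{-1}(v)$ for $v\in H$. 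Universality then produces $\Phi$ and $\Psi$.

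It remains to check that the compositions act as the identity on generators. For $\Phi\circ\Psi$ this reduces to the displayed identity for the $p_v$ and to $\Phi(\tilde s_e)=s_e$; the only nonobvious case is the third one, where $\Phi(\tilde s_e)=\sum_\beta s_{e\beta}s_\beta^*=s_e\bigl(\sum_\beta s_\beta s_\beta^*\bigr)=s_ep_{r_E(e)}=s_e$. For $\Psi\circ\Phi$ the one substantive point is that $\Psi(s_\alpha)=t_{\overline\alpha}$ for every $\alpha\in F(H)$, which I would prove by induction on $|\alpha|$: after peeling off the first edge $e$ of $\alpha=e\gamma$, the tail $\gamma$ is itself a first-passage path and hence one of the indices $\beta$ appearing in the definition of $\tilde s_e$, so that $\tilde s_e\,t_{\overline\gamma}=t_{\overline{e\gamma}}$ because all other terms are killed. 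Given this, $\Psi(\Phi(q_\alpha))=\Psi(s_\alpha s_\alpha^*)=q_\alpha$ and $\Psi(\Phi(t_{\overline\alpha}))=t_{\overline\alpha}$, so $\Phi$ and $\Psi$ are mutually inverse and $C^*(E)\cong C^*(E(H))$. The main obstacle, and the place where the hypotheses are really used, is keeping the sets $F(H)_v$ finite and nonempty: finiteness --- needed for $\tilde p_v$ and $\tilde s_e$ to be honest elements and for the induction above to terminate --- rests on $E^0\setminus H$ consisting of regular vertices together with the length bound on first-passage paths, while nonemptiness --- needed so that $\{\tilde p_v,\tilde s_e\}$ is a genuine $E$-family and the displayed identity for $p_v$ is nontrivial --- is exactly the hypothesis $v\dm H$. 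Everything else is a routine, if somewhat lengthy, verification of the Cuntz--Krieger relations, the fiddliest point being edges $e$ with $r_E(e)\notin H$, where treating the empty path as contributing $q_{r_E(e)}$ keeps the formula for $\tilde s_e$ uniform across the cases.
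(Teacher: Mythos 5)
Your proof is correct. The paper itself gives no argument for Theorem~\ref{t:EH} beyond the remark that the proof of Theorem~3.9 of \cite{ar-corners-ck-algs} carries over, and that proof rests on exactly the Cuntz--Krieger $E(H)$-family you exhibit inside $C^*(E)$, namely $\{p_v, s_\alpha s_\alpha^*\}$ and $\{s_e, s_\alpha\}$, together with the key identity $p_v=\sum_{\alpha\in F(H)_v}s_\alpha s_\alpha^*$ for $v\in E^0\setminus H$ (whose proof by induction on the length bound, using regularity of the vertices outside $H$ and nonemptiness of $F(H)_v$ from $v\dm H$, is exactly as you describe; note that acyclicity is in fact forced by the length bound together with $v \dm H$). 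Where you depart from the cited route is in establishing injectivity: rather than invoking a uniqueness theorem, you write down the inverse $\Psi$ explicitly via the family $\tilde p_v$, $\tilde s_e$ and verify the two compositions on generators. This is a legitimate and self-contained alternative --- it avoids the slightly delicate point that the family $\{s_\alpha s_\alpha^*, s_\alpha\}$ is not equivariant for the standard gauge actions (the partial isometry $s_\alpha$ scales by $z^{|\alpha|}$), so the gauge-invariant uniqueness theorem cannot be applied off the shelf --- at the cost of the extra bookkeeping in checking that $\{\tilde p_v,\tilde s_e\}$ is a Cuntz--Krieger $E$-family, all of which you handle correctly, including the collapse $\tilde s_e\, t_{\overline\gamma}=t_{\overline{e\gamma}}$ in the induction showing $\Psi(s_\alpha)=t_{\overline\alpha}$.
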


\begin{corollary} \label{c:porcupine}
Let $E$ be a graph, and let $G$ be a CK-subgraph of $SE$ containing $E$ as a CK-subgraph.
Then $C^*(G)\cong C^*(G(E^0))$.
\end{corollary}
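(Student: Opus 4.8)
The plan is to apply Theorem~\ref{t:EH} to the graph $G$ together with the hereditary subset $H=E^0\subseteq G^0$, where we regard $E$ inside $SE$ in the standard way, identifying $v\in E^0$ with $v^0\in(SE)^0$. With this choice the graph $E(H)$ of Theorem~\ref{t:EH} is exactly $G(E^0)$, so its conclusion becomes the asserted isomorphism $C^*(G)\cong C^*(G(E^0))$, and everything reduces to checking the hypotheses of Theorem~\ref{t:EH} for this data. The single structural fact behind all of them is the following: in $SE$ a vertex $v^i$ with $i\geq 1$ emits precisely the one edge $f(v)^i$, whose range is $v^{i-1}$; since $G\CKsub SE$, whenever $v^i\in G^0$ with $i\geq 1$ we get $s_G^{-1}(v^i)=\{f(v)^i\}$, hence $f(v)^i\in G^1$ and $v^{i-1}\in G^0$, and inductively $v^0,v^1,\dots,v^i$ all lie in $G^0$ together with the connecting edges $f(v)^1,\dots,f(v)^i$.

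First I would record that $E^0$ is hereditary in $G^0$: since $E\CKsub G$, a level-zero vertex $v^0$ emits in $G$ exactly the edges $s_E^{-1}(v)\subseteq E^1$, all of which have range again in $E^0$, so no vertex outside $E^0$ is reachable from $E^0$. Consequently $G^0\setminus E^0$ consists of vertices $v^i$ with $i\geq 1$; by the observation above each of these is regular in $G$, which is the hypothesis $G^0\setminus E^0\subseteq G^0_{\reg}$. Next, the edges of the subgraph $(G^0\setminus E^0,\,r_G^{-1}(G^0\setminus E^0),\,r_G,s_G)$ are precisely the head edges $v^i\to v^{i-1}$ with $i\geq 2$ (edges of $E^1$ have range in $E^0$, hence are excluded), and following such an edge strictly lowers the level index, so this subgraph is acyclic; moreover the forced descending path $v^i\to v^{i-1}\to\cdots\to v^0$ has nonzero length and ends in $E^0$, so $v\dm E^0$ for every $v\in G^0\setminus E^0$.

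It remains to bound the lengths of paths in $F(E^0)$: starting from $v^i\in G^0\setminus E^0$ the only available edge is $f(v)^i$, and the same is true at $v^{i-1},v^{i-2},\dots$, so every path out of $v^i$ in $G$ is forced to descend until it reaches $v^0\in E^0$, after which it can never re-enter $G^0\setminus E^0$ (again because level-zero vertices emit only $E^1$-edges). Hence the unique element of $F(E^0)$ with source $v^i$ is $f(v)^i f(v)^{i-1}\cdots f(v)^1$, of length $i$, which a fortiori bounds the lengths of all such paths. With these verifications Theorem~\ref{t:EH} applies and gives $C^*(G)\cong C^*(G(E^0))$. The only step needing care is the bookkeeping in the first paragraph — ensuring that membership $v^i\in G^0$ drags the whole head $v^0,\dots,v^i$, and its edges, into $G$, which is exactly what the hypothesis $G\CKsub SE$ supplies — together with the routine but essential check that $E(H)$ for $H=E^0$ coincides verbatim with $G(E^0)$.
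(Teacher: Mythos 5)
Your proof is correct and follows exactly the paper's route: the paper's own proof is the single sentence that $E^0$ is a hereditary subset of $G^0$ satisfying the hypotheses of Theorem~\ref{t:EH}, and you have simply carried out the (correct) verification of those hypotheses in detail.
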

\begin{proof}
The set $E^0$ is a hereditary subset of $G^0$ satisfying the assumptions of Theorem~\ref{t:EH}.
\end{proof}

\begin{lemma} \label{l:unitization}
Let $E$ be a graph and let $H$ be a hereditary subset of $E^0$. Assume that $H$ is finite.  Then $C^*(\widetilde E(H))\cong C^*(E(H))^\dagger$.
\end{lemma}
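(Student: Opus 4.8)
The plan is to exhibit $C^*(\widetilde E(H))$ explicitly as the minimal unitization of $C^*(E(H))$ by identifying a distinguished ideal and the complementary scalar. First I would compare the two graphs. Since $H$ is finite, the vertex $\star$ in $\widetilde E(H)$ is a regular vertex precisely when $F(H)$ is finite; in any case, the key structural observation is that the graphs $E(H)$ and $\widetilde E(H)$ agree on $H$ (with the same edges $s_E^{-1}(H)$ and the same range/source maps there), and that $E(H)$ is obtained from $\widetilde E(H)$ by ``expanding'' the single vertex $\star$ into the vertices $F(H)$ together with the edges $\overline F(H)$. Concretely, the subset $H\subseteq \widetilde E(H)^0$ is hereditary and saturated (it is hereditary because $H$ is hereditary in $E$, and it is saturated because the only vertex outside $H$, namely $\star$, either emits to $H$ through at least one edge in $F(H)$ — in which case it is not in the saturation — or $F(H)=\emptyset$, a degenerate case one checks separately). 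Moreover $\star$ is not a breaking vertex since $F(H)\subseteq r_{\widetilde E(H)}^{-1}(H)$, so $s^{-1}(\star)\cap r^{-1}(H)=F(H)=s^{-1}(\star)$ is either empty, finite (and then $\star$ is regular), or infinite (and then the whole of $s^{-1}(\star)$ meets $r^{-1}(H)$, ruling out the breaking condition). Hence $(H,\emptyset)$ is an admissible pair and $I_{(H,\emptyset)}$ is a gauge-invariant ideal of $C^*(\widetilde E(H))$.

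Next I would identify this ideal and the quotient. By the structure theory recalled before Proposition~\ref{prop: full corner gauge invariant}, the quotient $C^*(\widetilde E(H))/I_{(H,\emptyset)}$ is the graph algebra of the graph on vertex set $\{\star\}$ with all of $s^{-1}(\star)=F(H)$ deleted (edges into $H$ are killed), i.e.\ the graph with one vertex and no edges, whose $C^*$-algebra is $\C$. On the other hand, the ideal $I_{(H,\emptyset)}$ is, by Theorem~3.6 of~\cite{bhrs} together with Theorem~5.1 of~\cite{rt:ideals}, isomorphic to $C^*(\overline{\widetilde E(H)}_{(H,\emptyset)})$; and I claim this graph is exactly $E(H)$. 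Indeed $\widetilde E(H)_{(H,\emptyset)}$ has vertex set $H$ and edge set $s_{\widetilde E(H)}^{-1}(H) = s_E^{-1}(H)$, and then the Tomforde completion $\overline{(\cdot)}$ adjoins, for each edge of $\widetilde E(H)$ emitted from outside $H$ into $H$ — that is, for each $\alpha\in F(H)$ — a new regular source $v_\alpha$ together with one edge from $v_\alpha$ into $r_E(\alpha)$; this is precisely the recipe producing $F(H)$ as extra vertices and $\overline F(H)$ as extra edges in $E(H)$. Thus $I_{(H,\emptyset)}\cong C^*(E(H))$ as an ideal of $C^*(\widetilde E(H))$, sitting inside with quotient $\C$.

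Finally I would argue that this realizes the minimal unitization. It suffices to show $C^*(E(H))$ is not unital and that the extension $0\to C^*(E(H))\to C^*(\widetilde E(H))\to\C\to 0$ is unital (i.e.\ $C^*(\widetilde E(H))$ is unital) and essential, since a unital $C^*$-algebra $B$ with a non-unital ideal $I$ of codimension one and $I$ essential is the minimal unitization $I^\dagger$. Unitality of $C^*(\widetilde E(H))$ holds because $\widetilde E(H)^0 = H\cup\{\star\}$ is finite (here is where finiteness of $H$ is used), so $\sum_{v\in\widetilde E(H)^0} p_v$ is the unit. Non-unitality of $C^*(E(H))$: the vertex set $E(H)^0 = H\cup F(H)$ is finite if and only if $F(H)$ is finite, and if $F(H)$ is finite then $\star$ is a regular vertex of $\widetilde E(H)$, whence (CK3) forces $p_\star = \sum_{\alpha\in F(H)} s_\alpha s_\alpha^*\in I_{(H,\emptyset)}$, contradicting that the quotient is nonzero — so $F(H)$ is infinite and $C^*(E(H))$ is nonunital (in the degenerate case $F(H)=\emptyset$ one has $E(H)=\widetilde E(H)\setminus\{\star\}$ and a direct check gives the statement, or the statement is vacuous if additionally $H=\emptyset$). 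Essentiality of the ideal: any nonzero gauge-invariant ideal meeting $C^*(E(H))$ trivially would be generated by a projection in the quotient direction, but $\C$ has no proper ideals and $p_\star\notin I_{(H,\emptyset)}$ while every hereditary saturated set properly containing $H$ must contain $\star$ by saturation (as $\star$ emits only into $H$ along $F(H)$ when $F(H)$ is nonempty); chasing this shows $I_{(H,\emptyset)}$ is the unique maximal ideal and in particular essential. Assembling these facts gives $C^*(\widetilde E(H)) = C^*(E(H))^\dagger$.

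The main obstacle I anticipate is the bookkeeping in the second paragraph: matching the Tomforde ``add a regular source for each in-edge'' construction $\overline{(\cdot)}$ of~\cite{rt:ideals} against the explicit path-based definition of $E(H)$, and making sure the case analysis according to whether $F(H)$ is empty, finite, or infinite is handled uniformly (the finite case being the one that forces $C^*(E(H))$ to be unital, contradicting what we want — so it cannot actually occur unless the statement is degenerate). Everything else is a routine application of the gauge-invariant ideal structure theory quoted in the preliminaries.
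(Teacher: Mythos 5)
Your strategy for the case where $F(H)$ is infinite is essentially the paper's: there $H$ is hereditary and saturated in $\widetilde E(H)$ with no breaking vertices, $I_{(H,\emptyset)}\cong C^*(E(H))$ via the construction of \cite{rt:ideals}, the quotient is $\C$, and a nonunital codimension-one ideal of a unital $C^*$-algebra is automatically realized as its minimal unitization (your separate essentiality argument is superfluous for this reason). The identification of the Tomforde completion with $E(H)$ is also the step the paper takes.

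However, there is a genuine gap: the case $0<|F(H)|<\infty$ is a perfectly ordinary case, not a degenerate one, and your argument does not cover it. In that case $\star$ is a regular vertex of $\widetilde E(H)$ all of whose edges land in $H$, so $H$ is \emph{not} saturated (saturation would force $\star\in H$); hence $(H,\emptyset)$ is not an admissible pair, the ideal generated by $\{p_v\mid v\in H\}$ is all of $C^*(\widetilde E(H))$, and the quotient is $0$, not $\C$. Your attempt to rule this case out by contradiction is circular: you derive ``the quotient is nonzero'' from the admissibility of $(H,\emptyset)$, which is exactly what fails here. There is in fact no contradiction with the lemma, because when $F(H)$ is finite $C^*(E(H))$ is unital and the convention $\A^\dagger=\A$ applies, so the assertion becomes $C^*(\widetilde E(H))\cong C^*(E(H))$ outright. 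This must be proved by a different mechanism; the paper does it by observing that $E(H)=(\widetilde E(H))(H)$ and invoking Theorem~\ref{t:EH} for the regular vertex $\star$. A concrete instance where your argument breaks: $E^0=\{v,w\}$ with a single edge from $v$ to $w$ and $H=\{w\}$ gives $F(H)$ a singleton, $C^*(E(H))\cong\Mat_2(\C)\cong C^*(\widetilde E(H))$, and no proper codimension-one ideal anywhere in sight. You need to add the finite nonempty case as a separate branch of the proof.
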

\begin{proof}
The \csa{} $C^*(E(H))$ is unital if and only if $F(H) = \{ \alpha\in E^* \mid \alpha = e_1\cdots e_n, s_E(e_n)\notin H, r_E(e_n)\in H \}$ is finite.  If $F(H)$ is finite, then the unique vertex $\star$ of $\widetilde E(H)^0\setminus H$ is regular, so $C^*(\widetilde E(H))\cong C^*((\widetilde E(H))(H))$ by Theorem~\ref{t:EH}.  Since $E(H)=(\widetilde E(H))(H)$, it follows that $C^*(E(H))\cong C^*(\widetilde E(H))$.

Assume that $F(H)$ is not finite.  Then $H$ is a hereditary saturated subset of $\widetilde E(H)$ that has no breaking vertices.  Let $I_H$ denote the corresponding ideal and note that it is nonunital.  
By \cite{rt:ideals}, $I_H\cong C^*(E(H))$ as in this setting their construction corresponds with the construction of $E(H)$.
Since $\widetilde E(H)^0\setminus H$ is the singleton $\{\star\}$, the quotient $C^*(\widetilde E(H))/I_H$ is $\C$.  So since $C^*(\widetilde E(H))$ is unital, we conclude that $I_H^\dagger\cong C^*(\widetilde E(H))$.
\end{proof}

\begin{theorem} \label{thm:unitization}
Let $\A$ be a \spu{} \csa{}. Then $\A^\dagger$ is isomorphic to a graph \csa{} if and only if $\A$ is strongly Morita equivalent to a unital graph \csa{}.
\end{theorem}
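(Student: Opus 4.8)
The plan is to prove the two implications separately, using Theorem~\ref{t:main} for the direction ``$\A$ strongly Morita equivalent to a unital graph \csa{} $\Rightarrow$ $\A^\dagger$ a graph \csa{}'', and the gauge-invariance results of the previous sections for the converse.

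For the forward direction, I would begin with the hypothesis that $\A$ is strongly Morita equivalent to $C^*(E)$ for a graph $E$ with finitely many vertices. Using Proposition~\ref{p:canonical} together with Theorem~\ref{t:move-eq}, choose a stably complete graph $G$ with $C^*(E)\otimes\BK\cong C^*(G)\otimes\BK$; since $\A$ is \spu{}, Theorem~\ref{t:main} then supplies a graph $F$ with $G\CKsub F\CKsub SG$ and $\A\cong C^*(F)$. If $F$ has finitely many vertices, then $C^*(F)$ is unital, so $\A^\dagger=\A\cong C^*(F)$ is already a graph \csa{}; hence I may assume $F^0$ is infinite. The next step is to observe that $G^0$ is a finite hereditary subset of $F^0$: it is finite because $G$ is stably complete, and hereditary because every $F$-path starting in $G^0$ is an $SG$-path, and no $SG$-path leaves $G^0$ (the edges added in passing to $SG$ all point towards $G^0$). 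Corollary~\ref{c:porcupine} then identifies $C^*(F)$ with $C^*(F(G^0))$, which is non-unital since $F(G^0)$ then has infinitely many vertices, and Lemma~\ref{l:unitization} applied to the finite hereditary set $G^0$ gives $C^*(\widetilde F(G^0))\cong C^*(F(G^0))^\dagger\cong\A^\dagger$. Thus $\A^\dagger$ is isomorphic to a graph \csa{}.

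For the converse, assume $\A^\dagger\cong C^*(E)$ for some graph $E$. Since $\A^\dagger$ is unital, so is $C^*(E)$, hence $E^0$ is finite. Viewing $\A$ as an ideal of $\A^\dagger$, it corresponds under the isomorphism to an ideal $\I$ of $C^*(E)$, and $\I$ inherits \spu{}-ness from $\A$; in particular $\I$ is generated as a closed two-sided ideal by its projections. By Corollary~\ref{c:non-stable-kthy}, $\I=I_{(H,S)}$ for an admissible pair $(H,S)$ in $E$, and since $E^0$ is finite so is $E_{(H,S)}^0=H\cup S$, so that $C^*(E_{(H,S)})$ is a unital graph \csa{}. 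Finally, Proposition~\ref{prop: full corner gauge invariant} realizes $I_{(H,S)}$ as a full corner of $C^*(E_{(H,S)})\otimes\BK$, hence as a \csa{} strongly Morita equivalent to $C^*(E_{(H,S)})$; therefore $\A\cong\I$ is strongly Morita equivalent to a unital graph \csa{}.

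The genuine content sits in the forward direction, in the reduction via Corollary~\ref{c:porcupine} that recognizes the graph $F$ produced by Theorem~\ref{t:main} as one whose minimal unitization is understood through Lemma~\ref{l:unitization}; the main things to get right there are matching up the notation of Theorem~\ref{t:main}, Corollary~\ref{c:porcupine}, and Lemma~\ref{l:unitization}, verifying that $G^0$ meets the finiteness and heredity hypotheses of the porcupine construction, and isolating the degenerate case in which $C^*(F)$ is already unital so that Lemma~\ref{l:unitization} is not invoked outside its intended scope. The supporting analytic facts --- that a \spu{} \csa{} is generated by its projections, that a full corner is strongly Morita equivalent to the ambient algebra, and that the minimal unitization respects $*$-isomorphisms --- are routine.
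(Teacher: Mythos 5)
Your proof is correct. The forward direction (from strong Morita equivalence to a unital graph \csa{} to the unitization being a graph \csa{}) follows the paper's route exactly: Theorem~\ref{t:main} produces $F$ with $G\CKsub F\CKsub SG$, Corollary~\ref{c:porcupine} rewrites $C^*(F)$ as $C^*(F(G^0))$, and Lemma~\ref{l:unitization} identifies the unitization; your extra care in checking that $G^0$ is finite and hereditary and in isolating the unital case is harmless (the paper instead splits on whether $\A$ is unital at the outset, and Lemma~\ref{l:unitization} already covers the unital case via the convention $\A^\dagger=\A$). Where you genuinely diverge is the converse. The paper regards $\A$ as a \spu{} hereditary \cssa{} of the unital graph \csa{} $\A^\dagger$ and invokes Corollary~\ref{c:hereditary}, which runs the full machinery of Theorem~\ref{t:main} a second time and returns an explicit graph presentation of $\A$ itself. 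You instead observe that $\A$ sits as an ideal $\I$ in $C^*(E)\cong\A^\dagger$ generated by projections, apply Corollary~\ref{c:non-stable-kthy} to write $\I=I_{(H,S)}$, and use Proposition~\ref{prop: full corner gauge invariant} to realize $\I$ as a full corner of $C^*(E_{(H,S)})\otimes\BK$ with $E_{(H,S)}^0=H\cup S$ finite. This is lighter and perfectly adequate for the statement, since only the strong Morita equivalence class of $\A$ is asserted; the paper's heavier route buys the additional information that $\A$ is itself isomorphic to a concrete graph \csa{} of the form $C^*(G)$ with $F\CKsub G\CKsub SF$, which the paper does not need here but which is in the spirit of its constructive results.
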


\begin{proof}
Suppose $\A{}^{\dagger}$ is isomorphic to a graph $C^{*}$-algebra.  If $\A{}$ is unital, then $\A{}^{\dagger} = \A{}$ so $\A{}$ is strongly Morita equivalent to a unital graph $C^{*}$-algebra.
Suppose $\A{}$ is not unital.  Then $\A$ is a \spu{} hereditary \cssa{} of the unital graph \csa{} $\A^\dagger$.
By Corollary~\ref{c:hereditary}, there exists a graph $F$ with finitely many vertices and a graph $G$ with $F\CKsub G\CKsub SF$ and $\A\cong C^*(G)$.  Note that $C^*(G)\otimes\BK\cong C^*(F)\otimes\BK$.
Hence $\A$ is strongly Morita equivalent to the unital graph \csa{} $C^*(F)$.

Suppose that $\A{}$ is \spu{} and
strongly Morita equivalent to a unital graph $C^{*}$-algebra. 
If $\A{}$ is unital, then $\A=\A^\dagger$ and by Theorem~\ref{t:main}, $\A$ is isomorphic to a graph $C^{*}$-algebra.  Suppose $\A{}$ is not unital.  Let $E$ be a graph with finitely many vertices such that $\A{}$ is strongly Morita equivalent to $C^{*} (E)$.  By Theorem~\ref{t:main}, there exists a graph $F$ with finitely many vertices and a graph $G$ with $F\CKsub G\CKsub SF$ and $\A\cong C^*(G)$.
By Corollary~\ref{c:porcupine}, $C^*(G)\cong C^*(G(F^0))$.
So by Lemma~\ref{l:unitization}, $\A^\dagger\cong C^*(\widetilde G(F^0))$.
\end{proof}

\begin{example} \label{e:unitization}
Theorem~\ref{thm:unitization} fails in both directions when one omits the assumption of \spu{}ity.
For one direction, we note that the \csa{} $C_0(\mathbb R)$ is not stably isomorphic to a graph \csa{} while its minimal unitization $C(S^1)$ is isomorphic to a graph \csa{}.
For the other direction, consider the \csa{} $\A$ defined in Example~\ref{e:spu}.  Its minimal unitization $\A^\dagger$ is not isomorphic to a graph \csa{}, but $\A$ is stably isomphic to the graph \csa{} $C(S^1)$.

To see that $\A^\dagger$ is not isomorphic to a graph \csa{}, note that the primitive ideal space is homeomorphic to $(\mathbb T \times \{1,2\})/\sim$ with $(z,1) \sim (z,2)$ whenever $z \neq 1$.
It can be seen from Theorem~3.4 of~\cite{hs:prim} that no graph \csa{} has such a primitive ideal space. 
(The reader is referred to Theorem~1 of~\cite{gabe} as well, as the statement of the result in~\cite{hs:prim} is not correct.)
\end{example}

As an application of Theorem~\ref{thm:unitization}, we provide the following corollary on semiprojectivity. But first we introduce some terminology.

An important type of ideals in graph $C^\ast$-algebras are the gauge-invariant ideals. However, when we know that a $C^\ast$-algebra is isomorphic to a graph $C^\ast$-algebra, but we do not know the graph, this of course means that we do not have an induced gauge action. So how does one determine the gauge-invariant ideals?

Recall that the ideal lattice in a $C^\ast$-algebra $\A$ is canonically isomomorphic to the lattice of open subsets of the primitive ideal space of $\A$. An ideal in $\A$ is called \emph{compact} if it corresponds to a compact, open subset of the primitive ideal space of $\A$. Note that compact sets need not be closed. For graph $C^\ast$-algebras, it is not hard to see that the compact ideals are exactly the ideals generated by finitely many projections. By Corollary~\ref{c:non-stable-kthy} and by Theorem~3.6 of~\cite{bhrs}, the gauge-invariant ideals in a graph $C^\ast$-algebra are exactly those ideals which are generated by projections. Thus, the gauge-invariant ideals are exactly the ideals which can be formed as a union of the compact ideals. Note that this latter condition can be determined from the primitive ideal space of the $C^\ast$-algebra alone, and does not require that you know any other structure of the $C^\ast$-algebra.

Since all gauge-invariant ideals in a unital graph \csa{} are generated by finitely many projections, the compact ideals of a unital graph \csa{} are exactly the gauge-invariant ideals.  Hence, in the case of a $C^\ast$-algebra $\A$ which is strongly Morita equivalent to a unital graph $C^\ast$-algebra, the compact ideals in $\A$ are exactly the ideals which correspond to the gauge-invariant ideals via the strong Morita equivalence.

\begin{corollary}\label{c:semiproj}
Let $\A$ be a \spu{} \csa{} and assume that $\A$ is strongly Morita equivalent to a unital graph \csa{}.  Then $\A$ is semiprojective if and only if for any compact ideals $\mathfrak I \subseteq \mathfrak J$ in the minimal unitization $\A^\dagger$, $\mathfrak J/\mathfrak I$ is not strong Morita equivalent to $\BK^\dagger$ or $(C(S^1)\otimes\BK)^\dagger$.
\end{corollary}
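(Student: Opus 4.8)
The plan is to transfer the problem to the minimal unitization and then apply a characterisation of semiprojectivity for unital graph \csa{}s. By Theorem~\ref{thm:unitization}, the two standing hypotheses on $\A$ --- being \spu{} and strongly Morita equivalent to a unital graph \csa{} --- guarantee that $\A^\dagger$ is isomorphic to a graph \csa{}, and being unital it is then (isomorphic to) the graph \csa{} of a graph with finitely many vertices. Next I would invoke the standard fact that a separable \csa{} is semiprojective if and only if its minimal unitization is; this is trivially true when $\A$ is already unital, since then $\A^\dagger=\A$ by convention. Hence it suffices to decide when the unital graph \csa{} $\A^\dagger$ is semiprojective.

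For that I would apply the characterisation of semiprojectivity for graph \csa{}s of graphs with finitely many vertices: such a \csa{} is semiprojective if and only if none of its subquotients $\mathfrak J/\mathfrak I$ associated with two nested gauge-invariant ideals $\mathfrak I\subseteq\mathfrak J$ is strongly Morita equivalent to $\BK^\dagger$ or to $(C(S^1)\otimes\BK)^\dagger$. To match the statement of the corollary, I would then translate ``gauge-invariant'' into ``compact'': as recorded in the paragraph preceding this corollary, the gauge-invariant ideals of a unital graph \csa{} are precisely its compact ideals, and since compactness of an ideal is a property of the primitive ideal space alone, it does not depend on the identification of $\A^\dagger$ with a graph \csa{}. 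Consequently, semiprojectivity of $\A^\dagger$ --- equivalently of $\A$ --- is equivalent to the displayed condition that $\mathfrak J/\mathfrak I$ is not strongly Morita equivalent to $\BK^\dagger$ or $(C(S^1)\otimes\BK)^\dagger$ for every pair of compact ideals $\mathfrak I\subseteq\mathfrak J$ in $\A^\dagger$, as claimed.

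The substantive point, and the main obstacle, is the characterisation of semiprojectivity for unital graph \csa{}s itself --- in particular the assertion that $\BK^\dagger$ and $(C(S^1)\otimes\BK)^\dagger$ are the only obstructions. The ``only if'' direction rests on the known facts that neither $\BK^\dagger$ nor $(C(S^1)\otimes\BK)^\dagger$ is semiprojective, together with the fact that a non-semiprojective compact subquotient obstructs semiprojectivity of the ambient algebra. For the ``if'' direction one uses that a graph with finitely many vertices has only finitely many gauge-invariant ideals, so that $\A^\dagger$ is an iterated extension of finitely many gauge-simple subquotients, each of which is finite-dimensional, a matrix algebra over $C(S^1)$, a unital UCT Kirchberg algebra with free $K_1$, or a stabilisation of one of these; one then propagates semiprojectivity along these finitely many extensions using the appropriate permanence results, the only place where the propagation can break down being exactly at a $\BK^\dagger$- or $(C(S^1)\otimes\BK)^\dagger$-subquotient. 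Keeping track of the ideals that are not gauge-invariant, which in a graph \csa{} are governed by the gauge-invariant structure together with $C(S^1)\otimes\BK$-type pieces coming from cycles without exits, requires some further care but introduces no new obstructions.
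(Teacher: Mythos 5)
Your argument is correct and follows essentially the same route as the paper: reduce to $\A^\dagger$ via the standard fact that semiprojectivity passes between a \csa{} and its minimal unitization, identify $\A^\dagger$ with a unital graph \csa{} using Theorem~\ref{thm:unitization}, invoke the Eilers--Katsura characterisation of semiprojectivity for graph \csa{}s in terms of gauge-invariant subquotients, and translate ``gauge-invariant'' into ``compact'' via the discussion preceding the corollary. The only difference is that you sketch a proof of the Eilers--Katsura characterisation, whereas the paper simply cites \cite[Theorem~1.1]{ek:semiprojectivity}; that sketch is not needed for the corollary.
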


\begin{proof}
It is well-known that $\A$ is semiprojective if and only if $\A^\dagger$ is semiprojective. By Theorem \ref{thm:unitization}, $\A^\dagger$ is isomorphic to a unital graph $C^\ast$-algebra, so by \cite[Theorem 1.1]{ek:semiprojectivity} it follows that $\A^\dagger$ is semiprojective if and only if for any gauge-invariant ideals $\mathfrak I \subseteq \mathfrak J$ in $\A^\dagger$, the subquotient $\mathfrak J/\mathfrak I$ is not strong Morita equivalent to $\BK^\dagger$ or $(C(S^1)\otimes \mathbb K)^\dagger$. As noted above, the gauge-invariant ideals in a unital graph $C^\ast$-algebra are exactly the compact ideals.
\end{proof}

\section{Acknowledgements}
The authors thank the Department of Mathematics at the University of Hawai'i at Hilo and the Department of Mathematical Sciences at the University of Copenhagen for their hospitality and for providing an excellent working environment.  The authors are grateful to Institut Mittag-Leffler for hosting them during the research program \emph{Classification of operator algebras: complexity, rigidity, and dynamics} in Spring~2016.  This research was supported by the Danish National Research Foundation through the Centre for Symmetry and Deformation (DNRF92) at University of Copenhagen, and a grant from the Simons Foundation (\#279369) to Efren Ruiz.

\newcommand{\etalchar}[1]{$^{#1}$}

\end{document}